\documentclass[12pt,reqno]{amsart}
\usepackage{graphicx}
\usepackage{amssymb,amsmath,amsthm}
\usepackage{amsthm}
\usepackage{color}
\usepackage[pdf]{pstricks}
\usepackage{hyperref}
\usepackage{empheq}
\usepackage{pgfplots}
\usepackage[utf8]{inputenc}

\numberwithin{equation}{section}

\newtheorem{remark}{Remark}

\newtheorem{proposition}{Proposition}
\newtheorem{lemma}{Lemma}
\newtheorem{theorem}{Theorem}

\title[Bifurcations of coupled solitary waves]{Bifurcations of solitary waves \\ 
in a coupled system of long and short waves}

\author[J. Hornick]{James Hornick}
\address[J. Hornick]{Department of Mathematics and Statistics, McMaster University, Hamilton, Ontario, Canada, L8S 4K1}
\email{hornickj@mcmaster.ca}

\author[D. E. Pelinovsky]{Dmitry E. Pelinovsky}
\address[D. E. Pelinovsky]{Department of Mathematics and Statistics, McMaster University, Hamilton, Ontario, Canada, L8S 4K1}
\email{pelinod@mcmaster.ca}

\begin{document}

\maketitle

\begin{abstract}
	We consider families of solitary waves in the Korteweg--de Vries (KdV) equation coupled with the linear Schr\"{o}dinger (LS) equation. This model has been used to describe interactions between long and short waves. To characterize families of solitary waves, we consider a sequence of local (pitchfork) bifurcations of the uncoupled KdV solitons. The first member of the sequence is the KdV soliton coupled with the ground state of the LS equation, which is proven to be the constrained minimizer of energy for fixed mass and momentum. The other members of the sequence are the KdV solitons coupled with the excited states of the LS equation. We connect the first two bifurcations with the exact solutions of the KdV--LS system frequently used in the literature. 
\end{abstract}

\section{Introduction} 

Coupling between long nonlinear dispersive waves and short linear high-frequency wave packets can be modeled by using the coupled system of the Korteweg-de Vries (KdV) equation and the linear Schr\"{o}dinger (LS) equation:
\begin{equation}
\label{KdV-NLS-start}
\begin{cases}
u_t+\alpha uu_x+\beta u_{xxx}+\gamma (\vert \psi\vert^2)_x=0, \\
i\psi_t+\kappa \psi_{xx}+\sigma u\psi = 0,
\end{cases}
\end{equation}
where \(\alpha, \beta, \gamma, \kappa, \sigma\) are nonzero real constants, which depend on the small parameter of the original physical problem. 

The coupled KdV--LS system (\ref{KdV-NLS-start}) was proposed to describe various physical models including the resonant interaction of capillary-gravity waves \cite{kaw}, interaction between short surface waves and long internal waves \cite{Hashizume1980}, the electron propagation coupled to nonlinear ion-acoustic waves in a collisionless plasma \cite{Nish}, and more recently, the energy transport by electrons in anharmonic lattices \cite{Cis}. The KdV--LS system (\ref{KdV-NLS-start}) was further used for modeling of the interaction between internal and surface waves in a two-layer stratified fluid 
\cite{Sul,Sul15} (see also \cite{Choi}). Long internal solitons are modeled by the KdV equation and the short modulated surface waves are modeled by the LS equation. The interaction provides a tool to detect internal waves from the fluid surface \cite{Sul}. Further extensions of the coupled models between long and short waves  were constructed recently for the deep water in \cite{Sul24}.

Derivation of the KdV--LS system (\ref{KdV-NLS-start}) was reviewed in \cite{Nguyen}, where it was pointed out that the presence of different scales among the coefficients of the model makes the rigorous derivation challenging. See also the follow-up work \cite{Nguyen2} for the study of well-posedness of the coupled models. By using the coordinate transformation 
$$
x\rightarrow \lambda_{1} x, \quad t \rightarrow \lambda_{2} t, \quad u\rightarrow \lambda_{3} u, \quad \psi\rightarrow \lambda_{4}\psi
$$
with 
$$
\lambda_1 = \frac{\kappa}{\beta}, \quad \lambda_2 = \frac{\kappa^3}{\beta^2}, \quad \lambda_3 = \frac{\alpha \beta}{\kappa^2}, \quad 
\lambda_4 = \frac{\sqrt{|\alpha \gamma |} \beta^2}{\kappa^2},
$$
the original coupled system (\ref{KdV-NLS-start}) can be transformed to the normalized form:
\begin{equation}
    \label{KdV-NLS}
\begin{cases}
    u_{t} + uu_{x} + u_{xxx} + s(\vert \psi\vert^2)_{x}=0, \\
 i\psi_{t}+\psi_{xx}+ku\psi=0,
\end{cases}
\end{equation}
where
\[
k = \frac{\sigma \beta}{\alpha \kappa} \in \mathbb{R}, \quad \mbox{\rm and} \quad  
s = \text{sgn}(\alpha \gamma) = \pm 1.
\]
If $\kappa = \mathcal{O}(\varepsilon^{-1})$, $\sigma = \mathcal{O}(\varepsilon^{-1})$ and $\gamma = \mathcal{O}(\varepsilon^{2p})$ in terms of the physically relevant small parameter $\varepsilon > 0$ with some $p \in (0,1)$, see \cite{Sul}, then $k = \mathcal{O}(1)$ is independent of $\varepsilon$, so that the normalized form (\ref{KdV-NLS}) is also independent of  $\varepsilon$. Similarly, the coupled model for the capillary--gravity waves with additional transport terms derived in \cite{kaw} is transformed to the 
normalized form (\ref{KdV-NLS}) by using the Galilean transformation. 

The coupled KdV--LS system (\ref{KdV-NLS}) is Hamiltonian with the following conserved quantities:
\begin{align}
\label{mass}
Q(\psi) &= \frac{s}{k} \int \vert \psi\vert^2dx, \\
\label{momentum}
P(u,\psi) &= \frac{1}{2} \int\left( u^2 + \frac{i s}{k} (\overline{\psi}\psi_x - \psi\overline{\psi}_x) \right)dx, \\ 
\label{energy}
H(u,\psi) &= \frac{1}{2} \int\left( (u_{x})^2 - \frac{1}{3} u^3 + \frac{2 s}{k}\vert \psi_{x}\vert^2 - 2 s u\vert \psi\vert^2\right)dx,
\end{align}
which have the physical meaning of mass, momentum, and energy, respectively.  

The coupled KdV--LS system (\ref{KdV-NLS}) admits a family of uncoupled KdV solitons, for which $\psi = 0$. Existence of coupled solitary waves with 
$\psi \neq 0$ was studied recently in \cite{Anco,Nguyen3}. Stability 
of the explicit family of coupled solitary waves was shown in \cite{Chen}. 
In the case $s = -1$ and $k < 0$, coupled solitary waves 
were studied in \cite{Pava,Albert13,Pava2006} by using the concentration compactness method, from which the existence and orbital stability of a global minimizer of energy \(H\) for fixed mass \(Q\) and momentum \(P\) follow. 
Within the variational methods, it is difficult to clarify the admissible 
values of the wave speed and frequency, for which the constrained minimizer of energy is realized, as well as the corresponding profile $(u,\psi)$ of the coupled solitary waves.

{\em The main purpose of our work is to clarify the existence and stability 
of families of coupled solitary waves by studying their local bifurcations from 
the family of uncoupled KdV solitons for $s = +1$ and $k > 0$.} This approach 
allows us to connect the two families of coupled solitary waves obtained 
in \cite{Anco} to the first two local (pitchfork) bifurcations in a sequence of 
bifurcations of the family of the uncoupled KdV solitons.  

By using the Lyapunov–Schmidt reduction, we not only prove the existence of families of coupled solitary waves within the admissible values of the wave speed and frequency, but also we study the Morse index of the Hessian operator associated with the variational formulation of the wave profile $(u,\psi)$ as a constrained minimizer of energy for fixed mass and momentum. As the main outcome of our analysis, we prove that the coupled {\em sign-definite} solitary wave is 
a constrained minimizer of energy and the coupled {\em sign-indefinite} solitary waves are saddle points of the constrained energy. As a part of explicit computations, we also recover the stability conclusion for the exact sign-definite solitary waves obtained in \cite{Chen}. 

It is interesting that our conclusions remain valid for both supercritical and subcritical pitchfork bifurcations of coupled solitary waves. Based on numerical approximations, we show that both the supercritical and subcritical pitchfork 
bifurcations do occur for the first two bifurcations, depending on the parameter 
$k > 0$ in the system (\ref{KdV-NLS}) with $s = +1$. These results agree with the qualitative 
studies of pitchfork bifurcations (among the other bifurcations) under the presence of symmetries in the generalized NLS equation \cite{Yang}.

The paper is organized as follows. Section \ref{sec-2} presents our 
main results on the existence, bifurcations, and variational characterization 
of the solitary waves in the coupled KDV--LS system (\ref{KdV-NLS}). Section \ref{sec-3} gives details of analysis of the 
uncoupled KdV solitons with precise computations of the Morse index 
and the sequence of bifurcation points. Sections \ref{sec-4} and \ref{sec-5} report on the analysis of the first two pitchfork bifurcations. The analysis in Sections \ref{sec-3}, \ref{sec-4}, and \ref{sec-5} provides the proof of the main results.

\section{Main results}
\label{sec-2}

\subsection{Existence of traveling waves}

We consider the traveling wave solutions of the coupled KdV--LS system (\ref{KdV-NLS}) in the form:
\begin{equation}
    \label{TW}
u(x,t) = U(\xi), \quad \psi(x,t) =  e^{-i\omega t} \Psi(\xi), \quad \xi = x-ct,
\end{equation}
where the profiles $U : \mathbb{R} \to \mathbb{R}$ and $\Psi : \mathbb{R} \to \mathbb{C}$ satisfy the 
following system of differential equations:
\begin{equation}
    \label{ode-system-1}
    \begin{cases}
U''' - cU' + U U' + s (\vert \Psi\vert^2)'=0, \\
\Psi'' - ic \Psi' + k U \Psi +\omega \Psi=0.
\end{cases}
\end{equation}
Integrating the first equation of the system (\ref{ode-system-1}), we obtain 
\begin{equation}
    \label{ode-system-2}
U'' - c U + \frac{1}{2} U^2 + s \vert \Psi\vert^2 = C_1,
\end{equation}
where $C_1$ is an integration constant. If $U(\xi) \to 0$ and $|\Psi(\xi)|^2 \to 0$ as $|\xi| \to \infty$, then $C_1 = 0$. To integrate the second equation of the system (\ref{ode-system-1}), we use the 
polar form $\Psi = A e^{i\Theta}$ and obtain 
\begin{equation}
    \label{ode-system-3}
\begin{cases}
A'' + (\omega + kU) A + \Theta' (c - \Theta') A = 0, \\ 
\Theta'' A + 2 \Theta' A' - c A' =0. 
\end{cases}
\end{equation}
Multiplying the second equation of the system (\ref{ode-system-3}) by $A$ and integrating, we obtain 
\[
\Theta' = \frac{C_2}{A^2} + \frac{c}{2},
\]
where $C_2$ is another integration constant. If $A(\xi) \to 0$ as $|\xi| \to \infty$, then we have to choose 
$C_2 = 0$ to avoid divergence of $\Theta'(\xi)$ at infinity, which yields $\Theta'(\xi) =\frac{c}{2}$. 
Equation (\ref{ode-system-2}) and the first equation of the system (\ref{ode-system-3}) are now written 
as the system of two second-order differential equations for the profiles $U : \mathbb{R} \to \mathbb{R}$ and $A : \mathbb{R} \to \mathbb{R}$:
\begin{equation}
    \label{ode-system}
\begin{cases} 
U'' - c U + \frac{1}{2} U^2 + s A^2=0, \\ 
A'' + \left( \Omega + k U \right) A=0,
\end{cases}
\end{equation}
where $\Omega := \omega  +\frac{c^2}{4}$. The system (\ref{ode-system}) has the first invariant:
\begin{equation}
\label{first}
H(U,A) =  \frac{1}{2} (U')^2 - \frac{c}{2} U^2 + \frac{1}{6} U^3 + \frac{s}{k} (A')^2 + s U A^2 + \frac{\Omega s}{k}  A^2 = E,
\end{equation}
where $E$ is constant along solutions of the system (\ref{ode-system}). 
The first invariant (\ref{first}) gives the energy of the degree-two Hamiltonian system expressed by (\ref{ode-system}). 
If the system admits the second invariant, then it is Liouville integrable \cite{Hietarinta}. 

Analysis of the exact solitary wave solutions to the system (\ref{ode-system}) was developed in \cite{Anco} (see also \cite{Nguyen3}). If $A = 0$, the system reduces to the scalar equation 
\begin{equation}
\label{ode-kdv}
U'' - c U + \frac{1}{2} U^2 = 0,
\end{equation}
which is the traveling wave reduction of the integrable KdV equation.
Solving (\ref{ode-kdv}) yields the uncoupled KdV soliton for arbitrary $k$,
\begin{equation}
\label{solution-1}
U = 3 c \; \text{sech}^2\left(\frac{\sqrt{c}}{2}\xi\right), \quad A=0,
\end{equation}
where $c > 0$ is assumed. 

If $k = \frac{1}{6}$, 
the system (\ref{ode-system}) is obtained from the traveling wave reduction of the integrable 
(Melnikov) system derived in \cite{Mel1} and analyzed in \cite{Mel3,Mel2}:
\begin{equation}
\label{pde-system}
\begin{cases} 
(u_t + u u_x + u_{xxx})_x - 3 u_{yy} + s (|\psi|^2)_{xx} =0, \\ 
-i \psi_y + \psi_{xx} + \frac{1}{6} u \psi = 0.
\end{cases}
\end{equation}
Then, $u(x,y,t) = U(x-ct)$ and $\psi(x,y,t) = e^{i \Omega y} A(x-ct)$ reduces (\ref{pde-system}) to (\ref{ode-system}) with 
$k = \frac{1}{6}$. Exact solutions for the coupled solitons of the Melnikov system (\ref{pde-system}) were  obtained in \cite{Mel2}. For $k = \frac{1}{6}$, the coupled soliton of the system (\ref{ode-system}) obtained from (\ref{pde-system}) is given by 
\begin{equation}
\label{solution-2}
\begin{cases}
U=-12 \Omega \; \text{sech}^2(\sqrt{-\Omega} \xi), \\
A=\sqrt{-12 s \Omega (c+4\Omega)} \; \text{sech}(\sqrt{-\Omega} \xi),
\end{cases}
\end{equation}
where $\Omega <0$ is assumed. The solution (\ref{solution-2}) is well-defined if $s (c + 4 \Omega) > 0$. If $c > 0$, this can be satisfied with two options:
\begin{enumerate}
	\item[(a)] $s = 1$: $\Omega > -\frac{c}{4}$,
	\item[(b)] $s = -1$: $\Omega < -\frac{c}{4}$.
\end{enumerate}
The exact solution (\ref{solution-2}) is widely used in the literature, e.g. \cite{Pava,Chen}. It exists because the system (\ref{ode-system}) is Liouville integrable for $k=\frac{1}{6}$. Indeed, by using (3.2.26) and (3.2.27) in \cite{Hietarinta}, we obtain the second invariant of the degree-two Hamiltonian system:
\begin{equation}
\label{second-invariant}
I(U,A) = A (A') (U') - U (A')^2 + A^2 \left(\Omega U + \frac{1}{12} U^2 + \frac{s}{4} A^2 \right) + 3 (c + 4 \Omega) ((A')^2 + \Omega A^2).  
\end{equation}

In addition to (\ref{solution-1}) and (\ref{solution-2}), there exists another family of exact solutions of the system (\ref{ode-system}) based on \cite{Anco}. 
This additional family is not related to the two integrable cases. It exists for varying values of the parameter $k$ of the system (\ref{ode-system}):
	\begin{equation}
	\label{solution-3}
	k = -\frac{3 \Omega}{c-2\Omega}, \qquad 
	\begin{cases}
	U = 2 (c-2 \Omega) \; \text{sech}^2(\sqrt{-\Omega} \xi), \\
	A=\sqrt{2 s (c+4\Omega)(c - 2 \Omega)} \; \text{sech}(\sqrt{-\Omega} \xi) \; \text{tanh}(\sqrt{-\Omega} \xi),
	\end{cases}
	\end{equation}
	where $c > 0$ and $\Omega < 0$ are again assumed so that $c - 2 \Omega > 0$. The solution (\ref{solution-3}) is well-defined if $s (c + 4 \Omega) > 0$, which is the same condition as for the solution (\ref{solution-2}) with the same two options (a) and (b). The family (\ref{solution-3}) for $\Omega = -c$ and $s = -1$ corresponds to $k = 1$, for which the second invariant of the  system (\ref{ode-system}) can be obtained by using (3.2.22) in \cite{Hietarinta}:
	\begin{equation}
	\label{second-invariant-another}
	I(U,A) =  (A') (U') - c A U + \frac{s}{3} A^3 + \frac{1}{2} A U^2.
	\end{equation}
We did not find the second invariant of the system (\ref{ode-system}) for 
other values of $\Omega$ and $s$ in the solution (\ref{solution-3}).

\begin{remark}
	The exact solutions (\ref{solution-2}) and (\ref{solution-3}) exist for $c < 0$ if $s = -1$ (and $c - 2 \Omega > 0$ for the solution (\ref{solution-3})). However, we do not consider the values of $c < 0$ since we would like to connect the exact solutions (\ref{solution-2}) and (\ref{solution-3}) to the uncoupled solitons in the form (\ref{solution-1}) by means of local bifurcations.
\end{remark}

\begin{remark}
	\label{rem-reversibility}
	The system (\ref{ode-system}) enjoys the following reversibility symmetry: If $U(\xi)$ and $A(\xi)$ is the solution profile, then so are $U(-\xi)$ and $\pm A(-\xi)$. The solution is invariant under the reversibility symmetry if $U$ is spatially even and 
	$A$ is either spatially even or odd. The exact solutions (\ref{solution-1}), (\ref{solution-2}), and (\ref{solution-3}) are all invariant under the reversibility symmetry. Due to the translational symmetry, the representations (\ref{solution-1}), (\ref{solution-2}), and (\ref{solution-3}) can be translated from the point of symmetry at $\xi = 0$ to any point $\xi = \xi_0 \in \mathbb{R}$. 
\end{remark}

\subsection{Variational characterization}

The profile $(U,\Psi)$ defined by the system (\ref{ode-system-1}) is a critical point of the augmented energy functional:
\begin{equation}
    \label{energy-aug}
\Lambda(U,\Psi) = H(U,\Psi) + c P(U,\Psi) - \omega Q(\Psi).
\end{equation}
Indeed, applying variational derivatives to (\ref{energy-aug}) yields the system 
\begin{equation*}
\begin{cases}
-U'' - \frac{1}{2} U^2 - s \vert \Psi\vert^2 + c U =0, \\
-\frac{s}{k} \Psi'' - s U \Psi + \frac{i c s}{k}\Psi' - \frac{\omega s}{k} \Psi=0,
\end{cases}
\end{equation*}
which recovers (\ref{ode-system-1}) due to (\ref{ode-system-2}) with $C_1 = 0$. 

By using the representation $\Psi(\xi) = A(\xi) e^{\frac{ic \xi}{2}}$, we obtain the profile $(U,A)$ from the system (\ref{ode-system}). The profile 
$(U,A)$ is a critical point of the action functional
\begin{align}
\Lambda(U,A e^{\frac{ic \xi}{2}}) &= H(U,A) + c P(U,0) - \Omega Q(A) \notag \\
&= \frac{1}{2} \int \left(  (U')^2 + c U^2 - \frac{1}{3} U^3 + \frac{2 s}{k} (A')^2 - 2 s U A^2 - \frac{2 \Omega s}{k}  A^2 \right)d\xi,
    \label{action}
\end{align}
with $\Omega = \omega  +\frac{c^2}{4}$. Indeed, the representation $\Psi(\xi) = A(\xi) e^{\frac{ic \xi}{2}}$ transforms $\Lambda(U,\Psi)$ in (\ref{energy-aug}) to the form (\ref{action}), variational derivatives of which generate the system (\ref{ode-system}).

The following theorem gives the main result of this work. 

\begin{theorem}
	\label{theorem-main}
	Assume $c > 0$, $\Omega < 0$, and $s = {\rm sgn}(k)$. 
	If $s = -1$ or if $s = 1$ and $\Omega < \Omega_c$ with 
	\begin{equation}
	\label{Omega-c-intro}
	\Omega_c = -\frac{c}{16} \left( \sqrt{1 + 48 k} - 1 \right)^2,
	\end{equation}
	then the uncoupled soliton (\ref{solution-1}) is a local minimizer of the constrained energy $H$ for fixed momentum $P$ degenerate only by the translational symmetry. If $s = 1$ and  $\Omega \in (\Omega_c,0)$, 
	then the uncoupled soliton (\ref{solution-1}) is a saddle point of the constrained energy. Furthermore, for $k > \frac{J(J-1)}{12}$ with $J \in \mathbb{N}$, there exist a sequence $\{ \Omega_c^{(j)} \}_{j=1}^J$ of pitchfork bifurcations 
	(either super-critical or sub-critical) with 
		\begin{equation}
	\label{Omega-bif-intro}
	\Omega_c^{(j)} = -\frac{c}{16} \left( \sqrt{1 + 48 k} - 2j + 1 \right)^2, 
	\quad 1 \leq j \leq J, 
	\end{equation} 
	such that new families bifurcate from the family of uncoupled solitons. The family with $j = 1$ is a local minimizer of the constrained energy $H$ for fixed momentum $P$ and mass $Q$ degenerate only by the translational and rotational symmetries, whereas the families with $j = 2,\dots,J$ are saddle points of the constrained energy.
\end{theorem}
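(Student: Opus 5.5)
The argument rests on the Hessian of the action $\Lambda$ in (\ref{action}) at the uncoupled soliton (\ref{solution-1}), which decouples into a $U$-block and an $A$-block. The $U$-block is the classical KdV operator
\[
L_+ = -\partial_\xi^2 + c - U,\qquad U = 3c\,\mathrm{sech}^2\!\left(\tfrac{\sqrt c}{2}\xi\right).
\]
It has exactly one negative eigenvalue and the kernel spanned by $U'$. Because $\frac{d}{dc}P(U,0) > 0$, it is positive on the constraint set $\{\langle U, v\rangle = 0\}$ modulo $U'$. The $A$-block is
\[
\frac{2s}{k}\left(-\partial_\xi^2 - \Omega - kU\right).
\]
After rescaling $z=\frac{\sqrt c}{2}\xi$ it becomes $\frac{2sc}{4k}\big(-\partial_z^2 - 12k\,\mathrm{sech}^2 z - \tfrac{4\Omega}{c}\big)$. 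The Pöschl--Teller operator $-\partial_z^2 - \ell(\ell+1)\mathrm{sech}^2 z$ with $\ell(\ell+1)=12k$, i.e.\ $\ell = \frac{\sqrt{1+48k}-1}{2}$, has eigenvalues $-(\ell-j+1)^2$ for $j=1,\dots,\lceil \ell\rceil$. The $j$-th eigenvalue crosses zero in the $A$-block exactly when
\[
\frac{4\Omega}{c} = -(\ell-j+1)^2 .
\]
This is (\ref{Omega-bif-intro}). Existence of the $j$-th eigenvalue requires $\ell > j-1$, i.e.\ $k > j(j-1)/12$; this gives the stated condition on $J$.

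Since $s=\mathrm{sgn}(k)$, the prefactor $2s/k$ is positive. For $s=-1$ (so $k<0$) the potential term $-kU$ is nonnegative, so the $A$-block is strictly positive for $\Omega<0$. For $s=1$, the $A$-block is positive exactly when $\Omega<\Omega_c=\Omega_c^{(1)}$, and it has a negative direction when $\Omega\in(\Omega_c,0)$. Fixing $P$ constrains only the $U$-component, so no constraint removes this negative direction. This proves the first part: a constrained local minimizer degenerate only by translation, respectively a saddle point. Passing from coercivity of the Hessian to a genuine local minimizer is the standard Grillakis--Shatah--Strauss argument.

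For the bifurcations, I would apply a Lyapunov--Schmidt reduction at each $\Omega_c^{(j)}$. Write
\[
A = a\,\phi_j + \tilde A,\qquad U = U_0 + \tilde U,
\]
where $\phi_j = \mathrm{sech}^{\ell-j+1}z\cdot(\text{polynomial in } \tanh z)$ is the $j$-th bound state, which is even or odd by parity. I would work in the reversibility-symmetric subspace (Remark~\ref{rem-reversibility}) to remove the translational kernel. The symmetry $A\mapsto -A$ forces a reduced equation of the form
\[
a\left(\mu(\Omega-\Omega_c^{(j)}) + \nu a^2 + O(a^4)\right) = 0 .
\]
Here $\mu\neq0$ is the transversality coefficient, obtained from the eigenvalue derivative. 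The coefficient $\nu$ comes from the second-order correction $\tilde U = a^2 L_+^{-1}(s\phi_j^2)$ together with the cubic coupling term. This yields a pitchfork whose direction is $\mathrm{sgn}(\nu\mu)$. The nondegeneracy needs $\nu\ne0$, which I would check explicitly or numerically; the theorem allows either sign.

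The main obstacle is the Morse index of the bifurcating branch under the two constraints $P$ and $Q$. On the new branch the Hessian has $n(L_+)=1$ negative direction from the $U$-block, plus $j-1$ negative directions from the lower $A$-eigenvalues. In addition, the near-zero eigenvalue that splits off in the pitchfork has sign opposite to $\nu$, so it is negative for a subcritical branch and positive for a supercritical one. I would then compute the $2\times2$ constraint matrix $D = [\langle \partial_{c,\Omega}(U,A), (U,A)\rangle]$ built from the derivatives of $P$ and $Q$ along the branch. To leading order in $a$, $\partial_\Omega Q \sim a\,\partial_\Omega a \sim 1/\nu$.

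The key step is showing that the number of negative eigenvalues of $D$ exactly cancels the extra count from the pitchfork in both the supercritical and subcritical cases. The total is then $n(\text{Hessian}) - n(D) = j-1$. This gives a minimizer for $j=1$ and saddles for $j\ge2$.
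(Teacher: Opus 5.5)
Your route is the paper's route: the Pöschl--Teller spectrum and the KdV slope $\frac{d}{dc}P(U,0)>0$ for the primary branch, Lyapunov--Schmidt reduction in the reversible subspace, perturbative splitting of the double zero eigenvalue of ${\rm diag}(L_1,L_2)$, and the count $n(\hat{\mathcal L})=n(\mathcal L)-p_0-z_0$. The Morse-index bookkeeping, however, has a gap. You use the Hessian of the real action (\ref{action}) in $(U,A)$, which is only the block $\mathcal L_J$ of (\ref{block-1}). The constrained problem is posed for complex $\psi$, so the perturbation $z=(z_1+iz_2)e^{ic\xi/2}$ has an imaginary part. The full Hessian (\ref{Hessian}) therefore contains a second copy of $L_2$ acting on $z_2$. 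On the bifurcating branch $L_2A=0$, so this block carries the rotational kernel. Near $\Omega_c^{(j)}$ it also carries $j-1$ additional negative eigenvalues. Hence $n(\mathcal L)=2j-1+\epsilon$, where $\epsilon\in\{0,1\}$ counts the split eigenvalue, and the constrained index is $2(j-1)$, not $j-1$; for $j=2$ the paper finds exactly two negative directions. The qualitative conclusion survives. For $j=1$, though, you must still show that the $z_2$-block is nonnegative with kernel exactly ${\rm span}(A)$. This holds because $A=ag+\mathcal O(a^3)$ is positive and hence is the ground state of $L_2$. Without this step you establish neither $z(\mathcal L)=2$ nor degeneracy only by translations \emph{and rotations}. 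Your proposal never mentions the rotational symmetry.

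Second, the step you call key is left open. Your heuristic $\partial_\Omega Q\sim 1/\nu$ fixes only the sign of ${\bf D}_{22}$, and that is not enough. When ${\bf D}_{22}<0$ you still need a positive eigenvalue of ${\bf D}$, and ${\bf D}_{11}$ can vanish at leading order (it does for $k=\frac16$, see Remark \ref{remark-exact-2}). The paper closes this with the determinant. In the coordinates $(c,a^2)$ one has $\det{\bf D}=\frac{\partial a^2}{\partial\Omega}\bigl[\frac{\partial P}{\partial a^2}\frac{\partial Q}{\partial c}-\frac{\partial P}{\partial c}\frac{\partial Q}{\partial a^2}\bigr]$, and $\partial_cQ=\mathcal O(a^2)$ at fixed $a^2$. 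Using the KdV slope $\partial_cP(U_0,0)=18\sqrt c>0$, this gives $\det{\bf D}=\frac{18\sqrt c\,\|g\|_{L^2}^4}{k^2\langle g^2,L_1^{-1}g^2\rangle}+\mathcal O(a^2)$. So $\det{\bf D}$ and ${\bf D}_{22}$ both carry the sign of $\langle g^2,L_1^{-1}g^2\rangle$. This yields $p_0=1$ when the split eigenvalue is positive and $p_0=2$ when it is negative, which is precisely the cancellation you anticipated.
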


\begin{remark}
	We use $s = {\rm sgn}(k)$ to ensure that the second variation of the action functional (\ref{action}) at the family of the solitary waves with the profile $(U,A)$ be bounded from below (but not from the above). 
	\label{rem-values-of-s}
\end{remark}

\begin{remark}
	In the case of the local minimizers of the constrained energy, the orbital stability of either the uncoupled soliton (\ref{solution-1})  
	or the coupled solitary wave for perturbations $(w,z)$ in $H^1(\mathbb{R},\mathbb{R}) \times H^1(\mathbb{R},\mathbb{C})$ 
	follows from a general argument in Theorem 2.8 in \cite{Dmitry}. However, for the case of the saddle points of the constrained energy, 
	the orbital instability does not follow immediately unless the spectral instability can be proven, see Theorem 2.4 in \cite{Dmitry}. We do not have the spectral instability of the uncoupled soliton (\ref{solution-1}) 
	since the KdV equation is quadratically coupled to the solution of the 
	linear Schr\"{o}dinger equation in the KdV--LS system (\ref{KdV-NLS}). 
	Hence, we are not able to conclude on the 
	orbital instability of the uncoupled soliton (\ref{solution-1}) if $s = 1$ and $\Omega \in (\Omega_c,0)$.
\end{remark}

\subsection{Schematic illustration and the road map}

Figure \ref{fig-bifurcations} presents the bifurcation diagram for the three families of solitary wave solutions of the system (\ref{ode-system}) with 
$s = 1$, $k = \frac{1}{2}$, and a fixed value of $c > 0$. The bifurcation parameter is $\Omega < 0$. The family of uncoupled KdV solitons in the form (\ref{solution-1}) corresponds to the horizontal line and defines {\em the primary branch} for pitchfork bifurcations. In Proposition \ref{theorem-uncoupled}, we show that the Morse index $n(\widehat{\mathcal{L}})$ 
(the number of negative eigenvalues of the Hessian operator constrained by two symmetries of the KdV--LS system) for the primary branch is $0$ for $\Omega < -c$, 
$2$ for $-c < \Omega < -\frac{c}{4}$, and $4$ for $-\frac{c}{4} < \Omega < 0$ (if $k = \frac{1}{2}$). For $\Omega < -c$, the family of uncoupled KdV 
solitons is a local minimizer of the constrained energy $H$ for fixed momentum $P$, hence it is orbitally stable 
in the time evolution of the KdV--LS system (\ref{KdV-NLS}). 

Bifurcations of new {\em secondary} families of coupled solitary waves 
occur at $\Omega = -c$ 
and $\Omega = -\frac{c}{4}$ when the nullity index (the multiplicity of the zero eigenvalue of the Hessian operator) exceeds 
the number of symmetries. In Proposition \ref{theorem-bifurcation-first} and Figure \ref{fig-projection}, we show that the first bifurcation is a supercritical 
pitchfork bifurcation (if $k = \frac{1}{2}$). Furthermore, 
in Propositions \ref{theorem-Morse-first} and \ref{theorem-stability-first}, 
we show that the Morse index of the bifurcating family is $0$ so that it is a local minimizer of the constrained energy $H$ 
for fixed momentum $P$ and mass $Q$. The exact solution (\ref{solution-2}) gives a global continuation 
of this bifurcation (but in the case $k = \frac{1}{6}$). 

Finally, in Proposition \ref{theorem-bifurcation-second} and Figure \ref{fig-projection_2}, we show that the second bifurcation is a subcritical 
pitchfork bifurcation (if $k = \frac{1}{2}$). Furthermore, 
in Propositions \ref{theorem-Morse-second} and \ref{theorem-stability-second}, 
we show that the Morse index of the bifurcating family is $2$ so that it is a saddle point of the constrained energy $H$ 
for fixed momentum $P$ and mass $Q$. The exact solution (\ref{solution-3}) bifurcates from the primary branch for this exact value of $k = \frac{1}{2}$. However, it is globally extended 
in the form (\ref{solution-3}) for different values of $k$ in $\left(0,\frac{1}{2}\right)$ that depend on the 
bifurcation parameter $\Omega$, see Figure \ref{fig-region-third}.

\begin{figure}[htp!]
	\centering
	\includegraphics[width=0.75\textwidth]{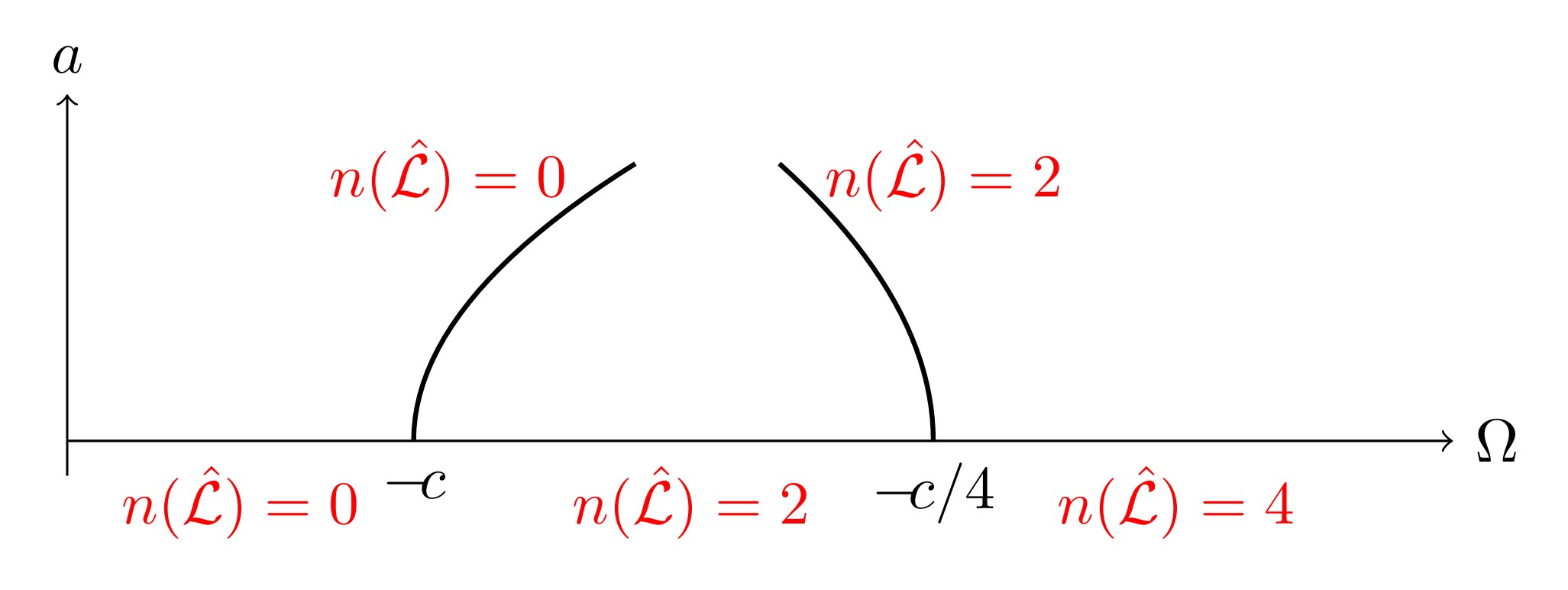}
	\caption{Schematic bifurcation diagram for $s = 1$ and $k = \frac{1}{2}$, which shows how the families of coupled solitary waves generalizing (\ref{solution-2}) and (\ref{solution-3}) are connected 
		with the family (\ref{solution-1}) of the uncoupled KdV solitons. $\widehat{\mathcal{L}}$ denotes the Hessian operator constrained by two symmetries of the KdV--LS system (\ref{KdV-NLS}) and $n(\widehat{\mathcal{L}})$ is its Morse index.}
	\label{fig-bifurcations}
\end{figure}

\begin{remark}
According to the reversibility symmetry in Remark \ref{rem-reversibility}, we obtain secondary branches in subspaces of functions with even and odd parities in the Sobolev space $H^2(\mathbb{R})$. We denote these subspaces by 
$H^2_{\rm even}(\mathbb{R})$ and $H^2_{\rm odd}(\mathbb{R})$, respectively. 
\end{remark}

\section{Stability and bifurcations of the uncoupled KdV solitons}
\label{sec-3}

\subsection{Hessian operator for the solitary waves}

The variational formulation of the solitary waves with the profile $(U,A)$ by using the action functional (\ref{action}) relates the system of differential equations (\ref{ode-system}) with the three conserved quantities of the KdV--LS system (\ref{KdV-NLS}) given by (\ref{mass}), (\ref{momentum}), and (\ref{energy}). We use this construction to define the Hessian 
operator for the solitary waves. The Hessian operator plays the central role in the analysis of stability and bifurcations of the solitary waves. 

\begin{lemma}
Let $(U,A) \in H^1(\mathbb{R}) \times H^1(\mathbb{R})$ be a critical point 
of the action functional (\ref{action}). The corresponding Hessian is expressed by the linear operator $\mathcal{L} : (H^2(\mathbb{R}))^3 \subset (L^2(\mathbb{R}))^3 \to (L^2(\mathbb{R}))^3$ given by 
\begin{equation}
    \label{Hessian}
\mathcal{L}=-\begin{pmatrix} \partial_{\xi}^2 + U - c & 2 s A & 0 \\ 
2 s A & \frac{2 s}{k}\left(\partial_{\xi}^2+kU+\Omega\right) & 0 \\
0&  0  &  \frac{2 s}{k}\left(\partial_{\xi}^2+kU+\Omega\right)
\end{pmatrix}.
\end{equation}
\label{lem-Hessian}
\end{lemma}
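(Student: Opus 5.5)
The Hessian is the second variation of the action $\Lambda(U,\Psi)$ in (\ref{energy-aug}) at the critical point $(U,\Psi) = (U, A e^{\frac{ic\xi}{2}})$. The perturbation must be allowed to be complex in $\Psi$, since the perturbations relevant for stability are $(w,z) \in H^1(\mathbb{R},\mathbb{R})\times H^1(\mathbb{R},\mathbb{C})$. This is why $\mathcal{L}$ acts on three real components rather than two. We perturb $U \to U + w$ and $\Psi \to (A + v + i y) e^{\frac{ic\xi}{2}}$ with real $w,v,y \in H^1(\mathbb{R})$. Factoring out the phase $e^{\frac{ic\xi}{2}}$ is exactly the transformation that took (\ref{energy-aug}) to (\ref{action}), with $\Omega = \omega + \frac{c^2}{4}$.

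We first substitute $\Psi = (A+v+iy)e^{\frac{ic\xi}{2}}$ into $H + cP - \omega Q$ using (\ref{mass})--(\ref{energy}). Write $\phi = A+v+iy$. Then $\Psi_\xi = (\phi' + \frac{ic}{2}\phi)e^{\frac{ic\xi}{2}}$, and the terms combine as follows:
\begin{itemize}
\item $|\Psi_\xi|^2 = |\phi'|^2 + \frac{c^2}{4}|\phi|^2 + \frac{ic}{2}(\phi\overline{\phi}' - \overline{\phi}\phi')$;
\item $i(\overline{\Psi}\Psi_\xi - \Psi\overline{\Psi}_\xi) = i(\overline\phi\phi' - \phi\overline\phi') - c|\phi|^2$.
\end{itemize}
With the coefficients $\frac{s}{k}$ from $H$ and $\frac{cs}{2k}$ from $cP$, the cross terms $\pm\frac{ic}{2}$ cancel. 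The $|\phi|^2$ terms give $\frac{s}{k}\left(\frac{c^2}{4} - \frac{c^2}{2} - \omega\right)|\phi|^2 = -\frac{s\Omega}{k}|\phi|^2$. The remaining term $i(\overline\phi\phi'-\phi\overline\phi')$ integrates to zero for $\phi = A + v + iy$ only to first order. At second order it contributes $-2(v'y - vy')$, which is a total derivative plus $4vy'$. I must check that this term cancels as well. Indeed, the two cross contributions coincide up to sign, so the total $\phi$-dependent quadratic part is $\frac{s}{k}\int(|\phi'|^2 - \Omega|\phi|^2)$ with no residual $c$-terms. This cancellation is the step I expect to need the most care.

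The action is then exactly (\ref{action}) with $A^2$ replaced by $|\phi|^2 = (A+v)^2 + y^2$ and $(A')^2$ by $(A'+v')^2 + (y')^2$:
$$
\frac12\int\Big((U')^2 + cU^2 - \tfrac13U^3 + \tfrac{2s}{k}(|\phi'|^2 - \Omega|\phi|^2) - 2sU|\phi|^2\Big)d\xi .
$$
We expand to second order in $(w,v,y)$. The first-order terms vanish because $(U,A)$ solves (\ref{ode-system}). The quadratic form equals $\frac12\langle \mathcal{L}(w,v,y),(w,v,y)\rangle$, computed term by term:
\begin{itemize}
\item from $(U')^2 + cU^2 - \frac13U^3$ we get $\frac12\int\left((w')^2 + cw^2 - Uw^2\right)$, that is, the entry $-(\partial_\xi^2 + U - c)$;
\item from $-2sU|\phi|^2$ the cross term $-4sAwv$ gives off-diagonal entries $-2sA$, split symmetrically, and $-sU(v^2+y^2)$;
\item from the $\frac{s}{k}$ terms we get $\frac{s}{k}\int\left((v')^2 + (y')^2 - \Omega(v^2+y^2)\right)$.
\end{itemize}
Combining the last two items gives the diagonal entries $-\frac{2s}{k}(\partial_\xi^2 + kU + \Omega)$ for $v$ and $y$ after integrating by parts. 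The $y$-component is decoupled because $y$ enters only through $y^2$.

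Finally, $\mathcal{L}$ is a relatively compact perturbation of a constant-coefficient self-adjoint operator with domain $(H^2)^3$, since $U, A$ are bounded. This makes $\mathcal{L}$ self-adjoint on that domain in $(L^2)^3$, as claimed.
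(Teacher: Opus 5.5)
Your proposal is correct and is essentially the paper's argument. The paper expands $\Lambda(U+w,\Psi+z)$ first and only then substitutes $z=(z_1+iz_2)e^{\frac{ic\xi}{2}}$, whereas you factor out the phase before expanding; the decisive step is the same in both, namely that the $\frac{ics}{2k}$ cross terms from $\frac{s}{k}|\Psi_\xi|^2$ and $cP$ cancel, leaving $-\frac{s\Omega}{k}|\phi|^2$.

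One correction: the paragraph about a \emph{remaining term} $i(\overline{\phi}\phi'-\phi\overline{\phi}')$ is spurious and should be deleted. The two cross contributions cancel identically for every $\phi$, so there is nothing left to track order by order. Your claim that this term integrates to zero at first order is also false on its own, since its linear part integrates to $4\int A'y\,d\xi$.
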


\begin{proof}
Adding a perturbation $(w,z)$ to the profile $(U,\Psi)$ in the augmented energy 
functional (\ref{energy-aug}) and using the traveling wave coordinate $\xi = x - ct$, we obtain the following expansion:
\begin{align*}
\Lambda(U+w,\Psi+z) &= \int\left( \frac{1}{2}(U'+w')^2-\frac{1}{6}(U+w)^3 
+ \frac{s}{k} \vert \Psi' + z' \vert^2 - s(U+w) \vert \Psi + z \vert^2 \right) d\xi \\
& \quad + c\int\left(\frac{1}{2}(U+w)^2 
+ \frac{i s}{2k} [(\overline{\Psi} + \bar{z})(\Psi' + z') - (\Psi + z)(\overline{\Psi}' + \overline{z}')] \right) d\xi \\
& \quad -\frac{\omega s}{k} \int \vert \Psi + z \vert^2 d \xi.
\end{align*}
Since $(U,\Psi)$ is a critical point of $\Lambda(U,\Psi)$, we get the expansion 
\begin{align*}
\Lambda(U+w,\Psi+z) = \Lambda(U,\Psi) + \frac{1}{2} Q_2(w,z) + \frac{1}{6} Q_3(w,z),
\end{align*}
where $Q_2$ and $Q_3$ are quadratic and cubic terms in $(w,z)$. To compute the Hessian operator, we only collect the quadratic terms in 
\begin{align*}
Q_2(w,z) &= \int\left( (w')^2 -  U w^2 
+ \frac{2 s}{k} \vert z' \vert^2 - 2 s U \vert z \vert^2 - 2 s w (\Psi \overline{z} + \overline{\Psi} z) \right) d \xi \\
& \quad + c\int\left( w^2 + \frac{i s}{k} (\bar{z} z' - z \overline{z}') \right) d\xi 
-\frac{2 \omega s}{k} \int \vert z \vert^2 d \xi.
\end{align*}
By using the variables 
\begin{equation}
    \label{variables-Psi-z}
\Psi = A e^{\frac{ic \xi}{2}}, \quad z = (z_1 + i z_2) e^{\frac{ic \xi}{2}},
\end{equation}
with real $A$ and $(z_1,z_2)$, we rewrite $Q_2(w,z)$ in the form
\begin{align*}
Q_2(w,z) &= \int\left( (w')^2 -  U w^2 
+ \frac{2 s}{k} [(z_1')^2 + (z_2')^2] - 2 s U (z_1^2 + z_2^2) - 4 s w A z_1 \right) d \xi \\
& \quad + c \int w^2 d\xi - \frac{2 \Omega s}{k} \int (z_1^2 + z_2^2) d \xi, 
\end{align*}
where $\Omega = \omega  +\frac{c^2}{4}$. Representing $Q_2(w,z)$ as a quadratic form for $\mathcal{L}$ acting on $(w,z_1,z_2)$ in $(L^2(\mathbb{R}))^3$
yields the Hessian operator $\mathcal{L}$ in the form (\ref{Hessian}). 
\end{proof}

It follows from (\ref{Hessian}) that $\mathcal{L}$ is block-diagonalized into a $2\times 2$ matrix Schr\"{o}dinger operator 
$\mathcal{L}_J : (H^2(\mathbb{R}))^2 \subset (L^2(\mathbb{R}))^2 \to (L^2(\mathbb{R}))^2$ given by 
\begin{equation}
    \label{block-1}
    \mathcal{L}_J = 
\begin{pmatrix} L_1 & -2 s A  \\ 
-2 s A & L_2 
\end{pmatrix} \qquad
\mbox{\rm with} \;\; 
\left\{ \begin{array}{l} L_1 = -\partial_{\xi}^2 + c - U, \\
L_2 = \frac{2 s}{k} \left( - \partial_{\xi}^2 - \Omega - kU \right),
\end{array} \right.
\end{equation}
and a scalar Schr\"{o}dinger operator $L_2 : H^2(\mathbb{R}) \subset L^2(\mathbb{R}) \to L^2(\mathbb{R})$. Since $U$ and $A$ decays to zero at infinity 
exponentially fast and we have assumed that $c > 0$ and $\Omega < 0$, Weyl's theorem implies that 
the continuous spectrum of $\mathcal{L}$ is a union of the continuous spectra of $L_1$ and $L_2$, which 
are given by $[c,\infty)$ and $\frac{2 s}{k} [|\Omega|,\infty)$, respectively. 
The continuous spectrum of $\mathcal{L}$ is strictly positive if and only if 
$$
s = {\rm sgn}(k),
$$
which is assumed from now on, see Theorem \ref{theorem-main} and Remark \ref{rem-values-of-s}. 

The main task of the stability analysis by using 
the Lyapunov theory is to compute the Morse index (the number and multiplicity of negative 
eigenvalues) and the degeneracy index (multiplicity of the zero eigenvalue) 
of the Hessian operator $\mathcal{L}$. To achieve the task, we recall 
the following well-known result, see \cite{Drazin}.

\begin{lemma}
Let $T : H^2(\mathbb{R}) \subset L^2(\mathbb{T}) \to L^2(\mathbb{R})$ be the scalar Schr\"{o}dinger operator given by 
\[
T=-\partial_x^2 - \gamma \;\mbox{\rm sech}^2(x), \quad \gamma>0.
\]
The continuous spectrum of \(T\) is \(\sigma_c(T)=[0,\infty)\), whereas the point spectrum of \(T\) depends on \(\gamma\) 
and consists of simple eigenvalues:
\begin{equation}
    \label{eigenvalues-Schr}
\sigma_p(T)=\left\{-\left( \frac{1}{2} \sqrt{1+4 \gamma} - n - \frac{1}{2}\right)^2, \ \ n=0,1,2,\dots,N\right \} \backslash \{0\},
\end{equation}
where $N = \lfloor \frac{1}{2} (\sqrt{1+4 \gamma} - 1) \rfloor$ with $\lfloor a\rfloor $ denoting the floor of a positive number $a$. 
\label{lem-Schr}
\end{lemma}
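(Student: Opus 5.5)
The plan is the classical Pöschl--Teller analysis. We reduce $Tf=\lambda f$ to a hypergeometric equation and read off the eigenvalues from the condition that the solution decays at both ends. The continuous spectrum is settled first. Since $\gamma\,\mathrm{sech}^2(x)$ decays exponentially, it is a relatively compact perturbation of $-\partial_x^2$. Weyl's theorem then gives $\sigma_c(T)=\sigma_c(-\partial_x^2)=[0,\infty)$. Eigenvalues can only be negative: there are no embedded eigenvalues, because for $\lambda\ge 0$ solutions behave like $e^{\pm i\sqrt{\lambda}x}$ (or like $a+bx$ when $\lambda=0$) as $|x|\to\infty$ and are not in $L^2$. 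Simplicity of the eigenvalues is automatic for a one-dimensional Schr\"odinger operator, since two $L^2$ solutions have zero Wronskian.

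For $\lambda=-\mu^2$ with $\mu>0$, write $\gamma=\nu(\nu+1)$ with $\nu=\frac12(\sqrt{1+4\gamma}-1)>0$. We substitute $f(x)=\mathrm{sech}^{\mu}(x)\,w(x)$ and change variables to $t=\frac12(1-\tanh x)\in(0,1)$. A routine computation turns $Tf=-\mu^2f$ into the hypergeometric equation
\[
t(1-t)w''+(\mu+1)(1-2t)w'-(\mu-\nu)(\mu+\nu+1)w=0,
\]
with parameters $a=\mu-\nu$, $b=\mu+\nu+1$, $c=\mu+1$.

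As $x\to+\infty$ (so $t\to0$), decay of $f$ selects the solution $w={}_2F_1(a,b;c;t)$ that is regular at $t=0$. The other solution behaves like $t^{-\mu}$, which makes $f$ grow like $e^{\mu x}$. As $t\to1$, the connection formula for ${}_2F_1$ gives
\[
w\sim\frac{\Gamma(c)\Gamma(c-a-b)}{\Gamma(c-a)\Gamma(c-b)}+\frac{\Gamma(c)\Gamma(a+b-c)}{\Gamma(a)\Gamma(b)}(1-t)^{-\mu}.
\]
Since $\mathrm{sech}^\mu x\sim (1-t)^{\mu/2}$ up to constants, the second term produces growth $e^{\mu|x|}$ as $x\to-\infty$. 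Decay therefore forces $1/(\Gamma(a)\Gamma(b))=0$. Because $b>0$, this means $a=\mu-\nu=-n$ with $n\in\{0,1,2,\dots\}$, so $\mu=\nu-n$. The requirement $\mu>0$ gives $n<\nu$. Conversely, for each such $n$ the hypergeometric series terminates, so $f=\mathrm{sech}^{\nu-n}(x)\,P_n(\tanh x)$ with $P_n$ a polynomial, which is indeed in $H^2$. The eigenvalues are therefore
\[
-(\nu-n)^2=-\left(\tfrac12\sqrt{1+4\gamma}-n-\tfrac12\right)^2, \qquad 0\le n\le N,
\]
with $N=\lfloor\nu\rfloor$. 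When $\nu$ is an integer, the case $n=N=\nu$ gives $\mu=0$; this is the zero-energy half-bound state, which is not an $L^2$ eigenvalue and is removed by the ``$\backslash\{0\}$'' in (\ref{eigenvalues-Schr}).

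The main obstacle is handling the endpoint asymptotics at $t=1$ cleanly. The connection formula requires $c-a-b=-\mu\notin\mathbb{Z}$; in the exceptional case where $\mu$ is an integer, logarithmic terms appear. I would avoid this by arguing directly with the terminating case. For non-terminating parameters, the coefficients of the series $\sum\frac{(a)_m(b)_m}{(c)_m m!}$ grow like $m^{\mu-1}$, and an Abelian argument shows that $w$ blows up like $(1-t)^{-\mu}$ (or like $\log$ when $\mu=0$). This excludes $L^2$ decay without needing the exact Gamma-function connection formula.
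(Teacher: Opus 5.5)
The paper does not prove this lemma; it quotes it as a well-known result with a reference to Drazin, so there is no argument in the paper to compare yours against. Your proposal is correct and is the classical P\"oschl--Teller derivation behind that citation. The reduction to ${}_2F_1(\mu-\nu,\mu+\nu+1;\mu+1;t)$ is right. The quantization $\mu=\nu-n$ with $0\le n<\nu$ reproduces exactly the set in (\ref{eigenvalues-Schr}), including removal of the $\mu=0$ half-bound state when $\nu\in\mathbb{N}$. Your Abelian argument also validly replaces the Gamma-function connection formula in the degenerate case $\mu\in\mathbb{N}$: for a non-terminating series the coefficients are asymptotic to $\frac{\Gamma(\mu+1)}{\Gamma(\mu-\nu)\Gamma(\mu+\nu+1)}\,m^{\mu-1}$ with a nonzero constant, which forces $w\sim C(1-t)^{-\mu}$ and hence growth $e^{\mu|x|}$ as $x\to-\infty$.
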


\begin{remark}
    If $\lfloor \frac{1}{2} (\sqrt{1+4 \gamma} - 1) \rfloor = \frac{1}{2} (\sqrt{1+4 \gamma} - 1)$, then $0$ is the resonance 
    of the Schr\"{o}dinger operator $T$ at the end point of $\sigma_c(T)=[0,\infty)$. It is excluded from $\sigma_p(T)$ in (\ref{eigenvalues-Schr}), 
    which then consists of $N$ negative eigenvalues. 
    If $\lfloor \frac{1}{2} (\sqrt{1+4 \gamma} - 1) \rfloor < \frac{1}{2} (\sqrt{1+4 \gamma} - 1)$, then $\sigma_p(T)$ in (\ref{eigenvalues-Schr}) 
    consists of $(N+1)$ negative eigenvalues.
\end{remark}

\subsection{Characterization of the uncoupled KdV solitons}

Lemmas \ref{lem-Hessian} and \ref{lem-Schr} give the necessary ingredients to study the variational characterization of the uncoupled KdV soliton with the profile $(U,0)$, where $U$ is given by (\ref{solution-1}). Since $A = 0$ for the uncoupled KdV soliton (\ref{solution-1}), the mass conservation $Q$ plays no role in the variational characterization.

\begin{proposition}
    \label{theorem-uncoupled}
Assume $c > 0$, $\Omega < 0$, and $s = {\rm sgn}(k)$. 
If $s = -1$ or if $s = 1$ and $\Omega < \Omega_c$ with 
\begin{equation}
    \label{Omega-c}
    \Omega_c = -\frac{c}{16} \left( \sqrt{1 + 48 k} - 1 \right)^2,
\end{equation}
then the uncoupled KdV soliton with the profile (\ref{solution-1}) is a local minimizer of the constrained energy $H$ 
for fixed momentum $P$ degenerate only by the translational symmetry. 
If $s = 1$ and $\Omega \in (\Omega_c,0)$, 
then the uncoupled KdV soliton is a saddle point of the constrained energy. 
\end{proposition}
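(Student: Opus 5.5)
With $A=0$ the Hessian (\ref{Hessian}) becomes fully diagonal, $\mathcal{L}=\mathrm{diag}(L_1,L_2,L_2)$, where $L_1=-\partial_\xi^2+c-U$ and $L_2=\frac{2s}{k}(-\partial_\xi^2-\Omega-kU)$ with $U=3c\,\mathrm{sech}^2(\frac{\sqrt c}{2}\xi)$. The plan is to count the negative and zero eigenvalues of each scalar block with Lemma \ref{lem-Schr}, and then pass to the constrained problem with the standard Vakhitov--Kolokolov / Grillakis--Shatah--Strauss count, using the momentum constraint $P$ in the $w$-component only.

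\textbf{Spectrum of $L_1$.} Rescale $x=\frac{\sqrt c}{2}\xi$. Then $L_1=\frac{c}{4}\left(-\partial_x^2+4-12\,\mathrm{sech}^2 x\right)$, so $\gamma=12$, $\sqrt{1+4\gamma}=7$, and the eigenvalues are $\frac{c}{4}(4-(3-n)^2)$ for $n=0,1,2$, namely $-\frac{5c}{4},\,0,\,\frac{3c}{4}$. Thus $L_1$ has one simple negative eigenvalue and a simple zero eigenvalue spanned by $U'$ (translation). Under the constraint of fixed $P$, whose gradient in $w$ is $U$, the reduced index is $n(L_1)-1$ if
\[
\langle L_1^{-1}U,U\rangle=-\tfrac{d}{dc}P(U,0)<0 .
\]
Since $L_1\partial_cU=-U$ and $P(U,0)=\frac12\|U\|^2\propto c^{3/2}$, this quantity is negative. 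So the $w$-part is nonnegative on the constraint set and degenerate only along $U'$.

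\textbf{Spectrum of $L_2$.} For $s=-1$ we have $k<0$, so $-kU\ge0$ and $\frac{2s}{k}>0$. Then $L_2\ge\frac{2s}{k}|\Omega|>0$ and contributes nothing. For $s=1$, $k>0$, rescale again to get
\[
L_2=\frac{2}{k}\cdot\frac{c}{4}\left(-\partial_x^2-\frac{4\Omega}{c}-12k\,\mathrm{sech}^2 x\right),
\]
so $\gamma=12k$ and $\sqrt{1+4\gamma}=\sqrt{1+48k}$. The lowest eigenvalue is
\[
\frac{2}{k}\cdot\frac{c}{4}\left(\frac{4|\Omega|}{c}-\frac14\left(\sqrt{1+48k}-1\right)^2\right),
\]
which is positive exactly when $|\Omega|>\frac{c}{16}(\sqrt{1+48k}-1)^2$, that is, $\Omega<\Omega_c$. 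More generally, the $n$-th eigenvalue vanishes at the value in (\ref{Omega-bif-intro}). For $\Omega<\Omega_c$, $L_2$ is positive and $\mathcal{L}$ restricted to the constraint is nonnegative with kernel spanned by $(U',0,0)$. For $\Omega\in(\Omega_c,0)$, $L_2$ has at least one negative eigenvalue, with eigenfunction $\varphi_0$. Since $\varphi_0\neq0$ and $A\equiv0$ so there is no mass constraint, both directions $(0,\varphi_0,0)$ and $(0,0,\varphi_0)$ are admissible (orthogonal to $U$ in the $w$-slot) and give negative values of the constrained second variation. Positive directions come from $L_1$ restricted to $U^\perp$ and from the continuous spectrum, so the point is a saddle.

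\textbf{Main obstacle.} The delicate step is upgrading the spectral statement to ``local minimizer degenerate only by translation'' in the energy space. This needs the coercivity estimate $\langle\mathcal{L}v,v\rangle\ge C\|v\|_{H^1}^2$ for $v$ orthogonal to $U$ and $U'$ in the first component, and a positive bound on the $L_2$ blocks. I would get it from the spectral gap: the lowest eigenvalue of $L_2$ is positive when $\Omega<\Omega_c$, the continuous spectrum starts at $c$ and at $\frac{2s}{k}|\Omega|$, and the negative direction of $L_1$ is excluded by the sign condition above. Control of the cubic remainder $Q_3$ in $H^1$ then follows by Sobolev embedding, which gives the local minimizer conclusion modulo translations.
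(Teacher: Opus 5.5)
Your proposal is correct and follows essentially the same route as the paper. The Hessian becomes $\mathrm{diag}(L_1,L_2,L_2)$, Lemma \ref{lem-Schr} gives the spectrum $\{-\frac{5c}{4},0,\frac{3c}{4}\}$ of $L_1$ and the sign of the lowest eigenvalue of $L_2$, and the constrained conclusion rests on the slope condition $\frac{d}{dc}P(U,0)>0$ with $P(U,0)=12c^{3/2}$. The paper does not carry out the step you sketch (coercivity on $\{U,U'\}^{\perp}$ plus control of the cubic remainder); it invokes Theorem 2.7 of \cite{Dmitry} for it.

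The differences are cosmetic. You derive the Vakhitov--Kolokolov quantity explicitly via $L_1\partial_c U=-U$. In the saddle case you exhibit two constrained negative directions $(0,\varphi_0,0)$ and $(0,0,\varphi_0)$, whereas the paper only notes that the unconstrained Morse index is at least $3$.
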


\begin{proof} 
For the uncoupled KdV soliton with the profile (\ref{solution-1}), the Hessian operator (\ref{Hessian}) is diagonal with 
$(L_1,L_2,L_2)$, where $L_1$ and $L_2$ are defined in (\ref{block-1}). Putting \(y = \frac{\sqrt{c}}{2} \xi\), 
we convert $L_1$ and $L_2$ to the form used in Lemma \ref{lem-Schr}:
\begin{align}
\label{block-2}
\left\{ \begin{array}{l}
    L_1 = \frac{c}{4} \left( - \partial_y^2 + 4 - 12 {\rm sech}^2(y) \right), \\
    L_2 = \frac{c}{2 |k|} \left( - \partial_y^2 + \frac{4 |\Omega|}{c} - 12 k {\rm sech}^2(y) \right).
    \end{array} \right.
\end{align}
By Lemma \ref{lem-Schr}, the normalized operator $T_1 = - \partial_y^2 + 4 - 12 {\rm sech}^2(y)$
has three eigenvalues \(-5,0,3\) isolated from the continuous spectrum on $[4,\infty)$. 
Therefore, $L_1$ has a simple negative eigenvalue and a simple 
zero eigenvalue with the eigenfunction spanned by $U'$ due to the translational symmetry.

If $s = {\rm sgn}(k) = -1$, the normalized operator $T_2 = - \partial_y^2 + \frac{4 |\Omega|}{c} + 12 |k| {\rm sech}^2(y)$
has no isolated eigenvalues from the continuous spectrum on $\left[ \frac{4 |\Omega|}{c},\infty \right)$. 
If $s = {\rm sgn}(k) = +1$, the normalized operator $T_2 = - \partial_y^2 + \frac{4 |\Omega|}{c} - 12 k {\rm sech}^2(y)$
has the smallest eigenvalue at 
$$
\frac{4 |\Omega|}{c} - \frac{1}{4} \left( \sqrt{1 + 48 k} - 1 \right)^2. 
$$
For $\Omega < \Omega_c$, where $\Omega_c$ is given by (\ref{Omega-c}), it is strictly positive, 
whereas for $\Omega \in (\Omega_c,0)$, it is strictly negative. 

We conclude that for $s = -1$ or if $s = 1$ and $\Omega < \Omega_c$, 
the Morse index of the uncoupled KdV soliton with the profile $(U,0)$ in (\ref{solution-1}) is exactly $1$ and the degeneracy index 
is exactly $1$. By Theorem 2.7 in \cite{Dmitry}, $(U,0)$ is a local constrained minimizer of energy $H$ 
subject to fixed momentum $P$ degenerate only by the translational symmetry 
if and only if the slope condition is satisfied:
$$
\frac{d}{dc} P(U,0) > 0.
$$
This is true since 
\begin{equation}
    \label{momentum-zero}
P(U,0) = \frac{9c^2}{2} \int_{\mathbb{R}} {\rm sech}^4\left(\frac{\sqrt{c} \xi}{2}\right) d\xi = 12 c^{\frac{3}{2}}
\end{equation}
so that $\frac{d}{dc} P(U,0) > 0$ and the statement is proven.
If $s = 1$ and $\Omega \in (\Omega_c,0)$, the Morse index of the uncoupled KdV soliton with the profile $(U,0)$ in (\ref{solution-1}) is at least $3$. By Theorem 2.7 in \cite{Dmitry}, $(U,0)$ is a saddle point of the constrained energy. 
\end{proof}

\begin{remark}
Operator $L_2$ in (\ref{block-2}) admits zero eigenvalues at $\{ \Omega_c^{(n)} \}_{j=1}^{J}$, where $\Omega_c^{(j)}$ is given by (\ref{Omega-bif-intro}) with 
$\Omega_c^{(1)} \equiv \Omega_c$ and $J$ defined by the largest integer such that $k > \frac{J(J-1)}{12}$. At $\Omega = \Omega^{(j)}$, the primary branch of the uncoupled KdV solitons undergoes a local bifurcation and a new secondary branch of the coupled solitary waves appears. In Section \ref{sec-4} and \ref{sec-5}, we study the first two bifurcations, for which the new secondary branches include the exact solutions (\ref{solution-2}) and (\ref{solution-3}), respectively.
\end{remark}

\section{Bifurcation of the coupled solitary wave at $\Omega_c^{(1)} \equiv \Omega_c$}
\label{sec-4}

\subsection{The bifurcating branch of coupled solitary waves}

By Proposition \ref{theorem-uncoupled}, the first bifurcation along the family of the uncoupled KdV solitons occurs at $\Omega = \Omega_c$, for which the first positive eigenvalue of the Schr\"{o}dinger operator $L_2: H^2(\mathbb{R}) \subset L^2(\mathbb{R}) \to L^2(\mathbb{R})$ for $\Omega < \Omega_c$ passes through zero and becomes negative for $\Omega > \Omega_c$. The first eigenvalue of $L_2$ corresponds to the following eigenfunction:
\begin{equation}
\label{g-eigenfunction}
    g(\xi) = \text{sech}^{p}\left( \frac{\sqrt{c}}{2} \xi\right), \quad p = \frac{\sqrt{1+48 k}-1}{2}.
\end{equation}
Note that $\Omega_c = -\frac{c}{4}$ and $p = 1$ if $k = \frac{1}{6}$, for which 
the profiles $U$ in (\ref{solution-1}) and $g$ in (\ref{g-eigenfunction}) coincide
with the profile $(U,A)$ of the coupled solitary wave given by (\ref{solution-2}) for $\Omega = \Omega_c$. Since $s = {\rm sgn}(k) = +1$, the solution family (\ref{solution-2}) exists for $\Omega \in (\Omega_c,0)$, 
for which the solution family (\ref{solution-1}) is a saddle point of the constrained energy by Proposition \ref{theorem-uncoupled}.


The following result shows that the intersection of the branch of coupled solitary waves (\ref{solution-2}) with the branch of uncoupled KdV solitons (\ref{solution-1}) at $\Omega = \Omega_c$ is not a coincidence but the outcome of a local pitchfork bifurcation which is valid for every $k > 0$ near $\Omega = \Omega_c$. 

\begin{proposition}
    \label{theorem-bifurcation-first}
    Assume $c > 0$, $\Omega < 0$, and $s = {\rm sgn}(k) = 1$.
    Let $U_0$ be given by (\ref{solution-1}) and $g$ be given by (\ref{g-eigenfunction}). 
    Assume that $\langle g^2, L_1^{-1} g^2 \rangle \neq 0$, where $L_1$ is defined in (\ref{block-2}). 
    For every $\Omega$ near $\Omega_c$ such that ${\rm sgn}(\Omega - \Omega_c) = -{\rm sgn}(\langle g^2, L_1^{-1} g^2 \rangle)$, 
    there exists a unique family of solutions with the profile $(U,A) \in H^2_{\rm even}(\mathbb{R}) \times H^2_{\rm even}(\mathbb{R})$ satisfying 
    $$
    \| U - U_0 \|_{H^2} \leq C |\Omega - \Omega_c|, \quad \| A \| \leq C \sqrt{|\Omega - \Omega_c|},
    $$
    for some $\Omega$-independent constant $C > 0$.
\end{proposition}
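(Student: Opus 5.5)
We set up a Lyapunov--Schmidt reduction in the even subspace $H^2_{\rm even}(\mathbb{R}) \times H^2_{\rm even}(\mathbb{R})$, where the symmetry-induced kernel element $U_0'$ of $L_1$ is removed because it is odd. We write $U = U_0 + w$ and $A = a g + \varphi$ with $\langle g, \varphi \rangle = 0$. Here $a \in \mathbb{R}$ is the small amplitude and $\Omega = \Omega_c + \mu$. With $s=1$, system (\ref{ode-system}) becomes
\begin{equation*}
L_1 w = \tfrac12 w^2 + A^2, \qquad \left(-\partial_\xi^2 - \Omega_c - k U_0\right) A = \mu A + k w A .
\end{equation*}
On even functions, $L_1$ is invertible: its kernel is spanned by the odd $U_0'$, and its remaining spectrum (the eigenvalue $-\tfrac{5c}{4}$, then $\tfrac{3c}{4}$ and $[c,\infty)$) is bounded away from zero. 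The operator $M := -\partial_\xi^2 - \Omega_c - kU_0 = \tfrac{k}{2} L_2|_{\Omega = \Omega_c}$ has the simple kernel ${\rm span}\{g\}$, with $g$ as in (\ref{g-eigenfunction}). All its other spectrum is positive, so $M$ is invertible on $\{g\}^\perp$ restricted to even functions. The reversibility of Remark \ref{rem-reversibility} ensures that even functions are mapped to even functions.

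The first step solves the first equation for $w$ by the implicit function theorem, treating $A$ as small in $H^2$. This gives $w = L_1^{-1}(A^2) + \mathcal{O}(\|A\|^4)$, a smooth map. The second step projects the second equation onto $\{g\}^\perp$ with the orthogonal projection $\Pi$. This gives $M\varphi = \Pi(\mu A + k w A)$, which we solve for $\varphi = \mathcal{O}(|a|(|\mu| + a^2))$, again by the implicit function theorem. Since $\varphi$ depends on $a$ through terms of order at least one, it is odd in $a$ by the symmetry $A \mapsto -A$.

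The third step is the bifurcation equation, obtained by pairing the second equation with $g$:
\begin{equation*}
F(a,\mu) := \mu a \|g\|^2 + k a^3 \langle g^2, L_1^{-1} g^2 \rangle + \mathcal{O}(|a|\mu^2 + a^3|\mu| + |a|^5) = 0 .
\end{equation*}
The function $F$ is odd in $a$. Factoring out the trivial branch $a=0$, which gives the uncoupled soliton, leaves $F/a = \mu\|g\|^2 + k a^2 \langle g^2, L_1^{-1} g^2\rangle + \dots$. This is smooth in $(a^2,\mu)$ and has nonzero $\mu$-derivative, so we solve it as $\mu = -k a^2 \langle g^2, L_1^{-1} g^2\rangle / \|g\|^2 + \mathcal{O}(a^4)$. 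Since $k>0$, a real $a$ exists exactly when ${\rm sgn}(\mu) = -{\rm sgn}\langle g^2, L_1^{-1}g^2\rangle$. Inverting gives $a^2 \sim |\mu|$, hence $\|A\| \le C\sqrt{|\Omega-\Omega_c|}$ and $\|w\|_{H^2} \le C a^2 \le C|\Omega - \Omega_c|$. Uniqueness holds up to the sign $a \mapsto -a$, which reproduces the symmetry $A \mapsto -A$; this is the pitchfork.

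The main obstacle is justifying smoothness of the reduced equation in $a^2$ and the uniform bounds. Care is needed because the coupling term $kwA$ involves $w = \mathcal{O}(a^2)$, so the cubic coefficient must be computed exactly. The contribution of $\varphi$ enters only at higher order, since $\langle g, \varphi \rangle = 0$ and $\varphi = \mathcal{O}(a\mu + a^3)$. To handle this, we first perform the reduction for the full map $(a,\mu) \mapsto (w,\varphi)$ with analytic or $C^2$ dependence. We then check the oddness of $F$ in $a$ directly from the equivariance of the system under $A \mapsto -A$.
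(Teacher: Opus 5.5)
Your proposal is correct and follows essentially the paper's argument: the same Lyapunov--Schmidt reduction on $H^2_{\rm even}(\mathbb{R}) \times H^2_{\rm even}(\mathbb{R})$ with $A = ag + \varphi$ and $\langle g,\varphi\rangle = 0$, the same solvability condition, and the same expansion $\Omega - \Omega_c = -k a^2 \langle g^2, L_1^{-1} g^2\rangle/\|g\|^2_{L^2} + \mathcal{O}(a^4)$. The only difference is the order of elimination: you keep $\mu$ free and solve $F(a,\mu)/a = 0$ at the end, while the paper substitutes the solvability condition into the range equation (dividing by $a$) and solves for $(w_1,z_1)$ in $a$ alone, so your ordering is the cleaner one. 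Two small remarks: $\Pi(\mu a g) = 0$ gives the sharper bound $\varphi = \mathcal{O}(|a|^3)$, and smoothness in $a^2$ is not needed, since the implicit function theorem for $F/a$ in $(a,\mu)$, together with uniqueness, already makes $\mu^*(a)$ even.
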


\begin{proof}
We represent the system of equations (\ref{ode-system}) as the root finding problem for the vector field
\begin{equation}
    \label{vector-field}
     {\bf F}(U,A,\Omega) : H^2(\mathbb{R}) \times H^2(\mathbb{R}) \times \mathbb{R} \to L^2(\mathbb{R}) \times L^2(\mathbb{R}), 
\end{equation}
with 
\begin{equation*}
    {\bf F}(U,A,\Omega) = \begin{pmatrix} -U'' + c U - \frac{1}{2} U^2 - s A^2 \\  
    \frac{2s}{k} \left( -A'' - (\Omega + kU) A\right) \end{pmatrix}, 
\end{equation*}
where $c > 0$ and $k > 0$ are fixed and $s = {\rm sgn}(k) = 1$. The Jacobian 
of ${\bf F}$ at $(U,A)$ is given by the matrix Schr\"{o}dinger operator $\mathcal{L}_J : (H^2(\mathbb{R})^2 \subset (L^2(\mathbb{R}))^2 \to (L^2(\mathbb{R}))^2$ in (\ref{block-1}). 
Evaluating $\mathcal{L}_J$ at $(U,A,\Omega) = (U_0,0,\Omega_c)$ with $U_0$ given by (\ref{solution-1}) and 
using perturbation $(w,z,\delta \Omega)$ to $(U_0,0,\Omega_c)$ yields 
the expansion
\begin{equation}
    \label{vector-perturbed}
    {\bf F}(U_0+w,z,\Omega_c+\delta \Omega) = \underbrace{{\bf F}(U_0,0,\Omega_c)}_{=0} + 
    \begin{pmatrix} L_1 & 0  \\ 
0 & L_2 
\end{pmatrix} \begin{pmatrix} w \\ z \end{pmatrix} + \begin{pmatrix} - \frac{1}{2} w^2 - s z^2 \\  
    -2 s w z - \frac{2 s}{k} \delta \Omega z \end{pmatrix},
\end{equation}
where the explicit form of the Schr\"{o}dinger operators $L_1$ and $L_2$ is given in (\ref{block-2}) for $\Omega = \Omega_c$.
The root-finding problem for ${\bf F}$ can be rewritten in the form
\begin{equation}
    \label{fixed-point-1}
\begin{pmatrix} L_1 & 0  \\ 
0 & L_2 
\end{pmatrix} \begin{pmatrix} w \\ z \end{pmatrix} = \begin{pmatrix} \frac{1}{2} w^2 + s z^2 \\  
    2 s w z + \frac{2s}{k} \delta \Omega z \end{pmatrix}.
\end{equation}
We recall that $\text{ker}L_1 = {\rm span}(U_0')$ and $\text{ker} L_2 = {\rm span}(g)$, where $U_0'$ is odd and $g$ is even in $\xi$. 
To eliminate the translational symmetry, we consider the implicit equation (\ref{fixed-point-1}) on the subspace of even functions 
$(w,z) \in H^2_{\rm even}(\mathbb{R}) \times H^2_{\rm even}(\mathbb{R})$, for which $L_1$ is invertible with a bounded inverse. 
The implicit equation (\ref{fixed-point-1}) is closed on this subspace, thanks 
to the reversibility symmetry of the system (\ref{ode-system}), see Remark \ref{rem-reversibility}. Therefore, we use the orthogonal decomposition
\begin{equation}
    \label{LSdecomposition}
\begin{pmatrix} w \\ z \end{pmatrix} = a \begin{pmatrix} 0 \\ g\end{pmatrix} + 
\begin{pmatrix} w_1 \\ z_1 \end{pmatrix}, \quad \mbox{\rm such that } \; \langle g, z_1 \rangle = 0, 
\end{equation}
with $a \in \mathbb{R}$ being a (small) parameter defined by the orthogonality constraint with the standard inner product 
$\langle \cdot, \cdot \rangle$ in $L^2(\mathbb{R})$. 
The implicit equation (\ref{fixed-point-1}) is rewritten in the form:
\begin{equation}
    \label{fixed-point-2}
\begin{pmatrix} L_1 & 0  \\ 
0 & L_2 
\end{pmatrix} \begin{pmatrix} w_1 \\ z_1 \end{pmatrix} = \begin{pmatrix} \frac{1}{2} w_1^2 + s (ag+z_1)^2 \\  
    2 s w_1 (a g + z_1) + \frac{2 s}{k} \delta \Omega (a g + z_1) \end{pmatrix}.
\end{equation}
Orthogonality of the second equation of system (\ref{fixed-point-2}) to $\text{ker} L_2 = {\rm span}(g)$ yields the solvability condition 
\begin{equation}
    \label{solv-eq}
\delta \Omega a \| g \|^2_{L^2} + k  \langle g, w_1 (a g + z_1)\rangle = 0. 
\end{equation}
Under the constraint (\ref{solv-eq}), $L_2$ is invertible with a bounded inverse on a subspace of $L^2(\mathbb{R})$ orthogonal to 
$\text{ker} L_2 = {\rm span}(g)$. This allows us to rewrite the implicit equation (\ref{fixed-point-2}) in the final form:
\begin{equation}
    \label{fixed-point-3}
\begin{pmatrix} w_1 \\ z_1 \end{pmatrix} = \begin{pmatrix} L_1^{-1} & 0  \\ 
0 & L_2^{-1} 
\end{pmatrix} \begin{pmatrix} \frac{1}{2} w_1^2 + s (ag+z_1)^2 \\  
    2 s w_1 (a g + z_1) - 2 s  (a g + z_1) \frac{\langle g, w_1 (a g + z_1)\rangle}{a \| g \|^2_{L^2}}
    \end{pmatrix}.
\end{equation}
By the implicit function theorem, there exists a unique solution $(w_1,z_1) \in H^2_{\rm even}(\mathbb{R}) \times H^2_{\rm even}(\mathbb{R})$ 
to (\ref{fixed-point-3}) such that $\langle g, z_1 \rangle = 0$
for every (sufficiently small) $a \in \mathbb{R}$ and the mapping $a \mapsto (w_1,z_1)$ is $C^{\infty}$. Let us denote the unique solution as $(\mathfrak{w}_1(a),\mathfrak{z}_1(a))$. It follows from the principal terms of the system (\ref{fixed-point-3}) that 
the mapping satisfies 
\begin{equation}
    \label{bounds-on-terms}
\| \mathfrak{w}_1(a) \|_{H^2} \leq C a^2, \qquad \| \mathfrak{z}_1(a) \|_{H^2} \leq C |a|^3,
\end{equation}
for some $C > 0$ uniformly for small $a \in \mathbb{R}$. For precise computations, we need the following transformation 
\begin{equation}
    \label{near-identity}
\mathfrak{w}_1(a) = a^2 w_2 + \mathfrak{w}_2(a), \quad \| \mathfrak{w}_2(a) \|_{H^2} \leq C a^4, 
\end{equation}
where the $a$-independent function $w_2 \in H^2_{\rm even}(\mathbb{R})$ is given by $w_2 = s L_1^{-1} g^2$. Substituting 
$(\mathfrak{w}_1(a),\mathfrak{z}_1(a))$ satisfying (\ref{bounds-on-terms}) and (\ref{near-identity}) into (\ref{solv-eq}), 
we obtain 
\begin{equation}
    \label{delta-Omega}
\delta \Omega = - k a^2  \frac{\langle g^2, L_1^{-1} g^2 \rangle}{\| g \|^2_{L^2}} + \mathcal{O}(a^4),
\end{equation}
which shows that ${\rm sgn}(\delta \Omega) = -{\rm sgn}(\langle g^2, L_1^{-1} g^2 \rangle)$ since $k > 0$. This completes the proof.
\end{proof}

\subsection{Characterization of the bifurcating branch}

We first compute the Morse index and the nullity index of the bifurcating branch 
in Proposition \ref{theorem-bifurcation-first}. 

\begin{proposition}
    \label{theorem-Morse-first}
    Let $(U,A)$ be the profile of the bifurcating branch for $\Omega$ near $\Omega_c$ given by Proposition \ref{theorem-bifurcation-first}. 
Its Morse index is equal to $1$ if $\langle g^2, L_1^{-1} g^2 \rangle < 0$ and to $2$ if $\langle g^2, L_1^{-1} g^2 \rangle > 0$, 
    whereas the nullity index is equal to $2$ in both cases. 
\end{proposition}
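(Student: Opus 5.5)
The Hessian $\mathcal{L}$ at the bifurcating profile $(U,A)$ is block-diagonal, with the $2\times 2$ block $\mathcal{L}_J$ in (\ref{block-1}) and the scalar block $L_2=\frac{2}{k}(-\partial_\xi^2-\Omega-kU)$. I will count negative and zero eigenvalues of each block separately by perturbation from $(U_0,0,\Omega_c)$, where the unperturbed operator is $\mathrm{diag}(L_1,L_2,L_2)$.

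At that point, $L_1$ has one negative eigenvalue and the simple kernel $\mathrm{span}(U_0')$. At $\Omega_c$, the operator $L_2$ has no negative eigenvalues, since $g$ is its ground state, and its kernel is $\mathrm{span}(g)$. So the unperturbed operator has Morse index $1$ and a three-dimensional kernel spanned by $(U_0',0,0)$, $(0,g,0)$ and $(0,0,g)$. For small $a$, the negative eigenvalue of $L_1$ stays negative and no other eigenvalues can cross zero, because the spectrum is isolated away from zero. It therefore remains to track the three eigenvalues near zero.

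\emph{Scalar block.} Along the branch, $L_2A=0$ by the second equation of (\ref{ode-system}), and $A=ag+\mathcal{O}(a^3)>0$ is positive. Hence $A$ is the ground state of $L_2$, so $0$ is the lowest eigenvalue of $L_2$ and it is simple. This block contributes nullity $1$ and Morse index $0$; the kernel element $(0,0,A)$ comes from the rotational (phase) symmetry.

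\emph{Block $\mathcal{L}_J$.} By the translational symmetry, $\mathcal{L}_J(U',A')^T=0$, which gives the continuation of $(U_0',0)$ and contributes nullity $1$. The remaining small eigenvalue $\lambda(a)$ lies in the even subspace and continues from $(0,g)$. To find its sign, I do not want a direct perturbation expansion. Instead I differentiate the branch with respect to $a$. Writing ${\bf F}(U(a),A(a),\Omega(a))=0$ and differentiating gives
\begin{equation*}
\mathcal{L}_J\begin{pmatrix}\partial_aU\\ \partial_aA\end{pmatrix}=\frac{2}{k}\,\Omega'(a)\begin{pmatrix}0\\ A\end{pmatrix}.
\end{equation*}
Take $v=(\partial_aU,\partial_aA)=(2aw_2,g)+\mathcal{O}(a^2)$ as an approximate eigenvector. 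Then
\begin{equation*}
\lambda(a)\approx\frac{\langle \mathcal{L}_Jv,v\rangle}{\|v\|^2}=\frac{2\Omega'(a)\langle A,\partial_aA\rangle}{k\|g\|^2}+\dots\approx \frac{2a\Omega'(a)}{k}.
\end{equation*}
From (\ref{delta-Omega}), $\Omega'(a)=-2ka\langle g^2,L_1^{-1}g^2\rangle/\|g\|^2+\mathcal{O}(a^3)$. Therefore
\begin{equation*}
\lambda(a)=-4a^2\,\frac{\langle g^2,L_1^{-1}g^2\rangle}{\|g\|^2}+\mathcal{O}(a^4).
\end{equation*}
This is negative when $\langle g^2,L_1^{-1}g^2\rangle>0$ and positive when it is $<0$. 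Counting, the Morse index is $1$ or $2$, respectively, and the nullity is $2$ in both cases, which is the claim.

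The main obstacle is making the estimate of $\lambda(a)$ rigorous rather than heuristic. I plan to do it with a Lyapunov–Schmidt reduction of the eigenvalue problem $\mathcal{L}_J\phi=\lambda\phi$ on $H^2_{\rm even}\times H^2_{\rm even}$, with $\phi=(0,g)+\phi_1$ and $\phi_1\perp(0,g)$. Inverting $L_1$ on the $w$-component gives the correction $2aL_1^{-1}(gA)\approx 2a^2w_2$. Projecting onto $(0,g)$ then yields the scalar equation
\begin{equation*}
\lambda\|g\|^2=\frac{2}{k}\Bigl(-\delta\Omega\,\|g\|^2-k\langle g,w_1g\rangle\Bigr)-4a^2\langle g^2,L_1^{-1}g^2\rangle+\mathcal{O}(a^4),
\end{equation*}
where I have taken $w_1=U-U_0$.

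The leading terms must combine correctly, and this is the step that needs care. By the solvability condition (\ref{solv-eq}), the first bracket cancels to $\mathcal{O}(a^4)$. What remains is the $a^2$ term from the off-diagonal coupling $-2sA$, which gives $\lambda\approx-4a^2\langle g^2,L_1^{-1}g^2\rangle/\|g\|^2$. This confirms the sign found above.
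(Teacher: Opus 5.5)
Your argument is correct and reaches the paper's conclusion by a somewhat different route.

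\textbf{Comparison with the paper.} The paper expands $L_J$ order by order in $a$ and applies degenerate perturbation theory to the whole double zero eigenvalue of $L_J^{(0)}$. The reduced $2\times 2$ matrix at order $a^2$ is diagonal. For the translational direction the paper must check $\lambda^{(2)}=0$ through the explicit identity (\ref{constraint-transl}), using $L_1U_0$ and $L_1U_0^2$.

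You split by parity instead. The exact kernel vector $(U',A')$ lies in the odd sector and accounts for its only near-zero eigenvalue, so that direction needs no computation. The even sector then carries a single simple small eigenvalue. Your one-dimensional Lyapunov--Schmidt reduction plus the implicit function theorem handles it rigorously; it is the rigorous counterpart of the paper's expansion. The cancellation of the diagonal $L_J^{(2)}$ entries in the paper is exactly your use of (\ref{solv-eq}).

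The tangent-vector identity $\mathcal{L}_J\,\partial_a(U,A)^T=\frac{2}{k}\Omega'(a)(0,A)^T$ is something the paper does not have. It explains the sign conceptually: $\lambda(a)\approx\frac{4a^2}{k}\frac{d\Omega}{d(a^2)}$, the exchange of stability at the pitchfork. It can also be made rigorous on its own. Since $\|\mathcal{L}_Jv\|=O(a^2)$ and the even sector has a spectral gap around the small eigenvalue, the components of $v$ along the other spectral subspaces are $O(a^2)$. The Rayleigh quotient of $v$ therefore equals $\lambda(a)$ up to $O(a^4)$.

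\textbf{Repairs.} Two points need fixing.
\begin{enumerate}
\item The claim ``$A=ag+\mathcal{O}(a^3)>0$'' does not follow from the $H^2$ remainder bound. $A$ decays like $e^{-\sqrt{|\Omega|}\,|\xi|}$, while $g$ decays like $e^{-\sqrt{|\Omega_c|}\,|\xi|}$. So the remainder gives no pointwise control of the sign of $A$ in the tails. Use the fact you already invoked instead. At $(U_0,\Omega_c)$, $0$ is a simple, isolated, lowest eigenvalue of $L_2$, and the perturbation is $O(a^2)$. Hence exactly one eigenvalue stays near $0$, and it equals $0$ because $L_2A=0$. This is the paper's argument; positivity of $A$ then follows from Sturm's theorem if you want it.
\item The $w$-component correction is $2L_1^{-1}(gA)\approx 2a\,w_2$, which is of order $a$, not $2a^2w_2$. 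With an $O(a^2)$ correction, the coupling term would enter only at $O(a^3)$, and the $a^2$ coefficient would vanish after the solvability cancellation. Your final coefficient $-4a^2\langle g^2,L_1^{-1}g^2\rangle$ corresponds to the correct $O(a)$ correction, so this is only a slip.
\end{enumerate}
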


\begin{proof}
We use the construction of Proposition \ref{theorem-bifurcation-first} with the profile 
\begin{equation}
\label{decomposition-bif}
U = U_0 + \mathfrak{w}_1(a) \quad  \mbox{\rm and} \quad 
A = a g + \mathfrak{z}_1(a),
\end{equation}
where $(\mathfrak{w}_1(a), \mathfrak{z}_1(a)) \in H^2_{\rm even}(\mathbb{R}) \times H^2_{\rm even}(\mathbb{R})$ satisfy the bounds (\ref{bounds-on-terms}). The parameter $a > 0$ parameterize the bifurcating branch, 
in particular, we have $\Omega = \Omega_c + \delta \Omega(a)$, where $\delta \Omega(a)$ is given by the expansion 
(\ref{delta-Omega}) with ${\rm sgn}(\delta \Omega) = -{\rm sgn}(\langle g^2, L_1^{-1} g^2 \rangle)$. 
The Hessian block $L_J$ defined by (\ref{block-1}) can be expanded by using (\ref{decomposition-bif}) as follows:
\begin{align*}
    L_J &= \begin{pmatrix} L_1 - \mathfrak{w}_1(a) & -2 s (a g + \mathfrak{z}_1(a))  \\ 
-2 s (a g + \mathfrak{z}_1(a)) & L_2 - 2 s \mathfrak{w}_1(a) -\frac{2 s}{k} \delta\Omega(a)
\end{pmatrix} \\
&= \begin{pmatrix} L_1 & 0  \\ 
0 & L_2 \end{pmatrix} 
+ a \begin{pmatrix} 0 & -2 s g \\ 
-2 s g & 0 \end{pmatrix} 
+ a^2 \begin{pmatrix} - s L_{1}^{-1} g^2 & 0 \\ 
0 & -2 L_{1}^{-1}g^2+  2\frac{\langle g^2, L_1^{-1} g^2 \rangle}{\| g \|^2_{L^2}}  
\end{pmatrix} + \mathcal{O}(a^3) \\
&= L_J^{(0)} + a L_J^{(1)} + a^2 L_J^{(2)} + \mathcal{O}(a^3),
\end{align*}
where we have used the leading-order terms for $\mathfrak{w}_1(a)$ and $\delta \Omega(a)$ from (\ref{near-identity}) and (\ref{delta-Omega}).
We are now looking at the eigenvalues and eigenvectors of the eigenvalue equation $L_J v = \lambda v$. 
Since $0$ is a double eigenvalue of $L_J^{(0)}$, we are looking for the splitting of the zero eigenvalue 
by using the perturbation theory. We write 
$$
v = v^{(0)} + a v^{(1)} + a^2 v^{(2)} + \mathcal{O}(a^3), \quad \lambda = a \lambda^{(1)} + a^2 \lambda^{(2)} + \mathcal{O}(a^3),
$$
where
$$
v^{(0)} = c_1 \begin{pmatrix} U_0'\\0\end{pmatrix} + c_2\begin{pmatrix}0\\g\end{pmatrix}
$$
for some $(c_1,c_2) \in \mathbb{R}^2$.

\underline{At the order of $\mathcal{O}(a)$,} we obtain 
\[
L^{(0)} v^{(1)} + L^{(1)} v^{(0)} =\lambda^{(1)} v^{(0)}.
\]
Writing in the component form, we obtain 
$$
v_1=\begin{pmatrix}f_1\\g_1\end{pmatrix} : \quad \left\{ \begin{array}{l} L_1 f_1 - 2 s g (c_2 g) = \lambda^{(1)} (c_1 U_0'), \\
L_2 g_1 - 2 s g (c_1 U_0') = \lambda^{(1)} (c_2 g). \end{array} \right.
$$
Since even $g^2$ is orthogonal to $U_0'$ and odd $g U_0'$ is orthogonal to even $g$, we obtain by Fredholm's theory 
for linear inhomogeneous equations that $\lambda^{(1)} = 0$. This yields the exact solution in the form 
$$
f_1 = 2 s c_2 L_1^{-1} g^2, \quad g_1 = 2 s c_1 L_2^{-1} g U_0',
$$
where $L_1^{-1}$ is uniquely defined in the space of even functions and $L_2^{-1}$ is uniquely defined in the space of odd functions. 
Differentiating of $(-\partial_{\xi}^2 - \Omega_c - k U_0) g = 0$ in $\xi$ yields $L_2 g' = 2 s U_0' g$, which ensures that 
$g_1 = c_1 g'$.

\underline{At the order of $\mathcal{O}(a^2)$,} we obtain 
\[
L^{(0)} v^{(2)} + L^{(1)} v^{(1)} + L^{(2)} v^{(0)} =\lambda^{(2)} v^{(0)}.
\]
Writing in the component form, we obtain 
$$
v_2=\begin{pmatrix}f_2\\g_2\end{pmatrix} : \quad \left\{ \begin{array}{l} L_1 f_2 - 2 s g g_1 - s  (L_1^{-1} g^2) (c_1 U_0') = \lambda^{(2)} (c_1 U_0'), \\
L_2 g_2 - 2 s g f_1 + \left( - 2 L_1^{-1} g^2 + 2 \frac{\langle g^2, L_1^{-1} g^2 \rangle}{\| g \|^2_{L^2}} \right) (c_2 g) = \lambda^{(2)} (c_2 g). \end{array} \right.
$$
Taking the inner product of the first equation with \(U_0'\in \text{Ker}(L_1)\) and 
also taking the inner product of the second equation with \(g\in \text{Ker}(L_2)\), we get the linear system 
for $(c_1,c_2) \in \mathbb{R}^2$:
\begin{align*}
-4 c_1 \langle g U_0', L_2^{-1} g U_0' \rangle - s c_1 \langle (U'_0)^2, L_{1}^{-1}g^2 \rangle &= \lambda^{(2)} c_1 \Vert U_{0}'\Vert^2_{L^2}, \\
-4c_2\langle g^2, L_{1}^{-1}g^2\rangle &= \lambda^{(2)} c_2 \Vert g\Vert^2_{L^2}
\end{align*}
where we have used the explicit expressions for $f_1$, $g_1$. Since the linear system is diagonal, we get two different solutions 
related to the subspaces $(c_1,c_2) = (1,0)$ and $(c_1,c_2) = (0,1)$. 

For the subspace $(c_1,c_2) = (1,0)$, we prove that 
\begin{equation}
    \label{constraint-transl}
4 \langle g U_0', L_2^{-1} g U_0' \rangle + s \langle (U'_0)^2, L_{1}^{-1}g^2 \rangle = 0,
\end{equation}
which yields $\lambda^{(2)} = 0$. This is consistent with the fact that the zero eigenvalue $\lambda = 0$ is 
preserved along the solution branch with the profile $(U,A)$ by the translational symmetry. To verify (\ref{constraint-transl}), we derive from 
\begin{align*}
    U_0'' &= c U_0 - \frac{1}{2} U_0^2, \\
    (U_0')^2 &= c U_0^2 - \frac{1}{3} U_0^3,
\end{align*}
that 
\begin{align*}
    L_1 U_0 &= -U_0'' + c U_0 - U_0^2 = -\frac{1}{2} U_0^2, \\
    L_1 U_0^2 &= -2 U_0 U_0'' - 2 (U_0')^2 + c U_0^2 - U_0^3 = -3 c U_0^2 + \frac{2}{3} U_0^3.
\end{align*}
Hence we obtain 
\begin{align*}
\langle (U'_0)^2, L_{1}^{-1}g^2 \rangle &= c \langle U_0^2, L_{1}^{-1}g^2 \rangle - \frac{1}{3} \langle U_0^3, L_{1}^{-1}g^2 \rangle\\
 &= c \langle L_{1}^{-1} U_0^2, g^2 \rangle - \frac{1}{3} \langle L_{1}^{-1} U_0^3, g^2 \rangle\\
  &= -\frac{c}{2} \langle L_{1}^{-1} U_0^2, g^2 \rangle - \frac{1}{2} \langle U_0^2, g^2 \rangle\\
    &= c \langle U_0, g^2 \rangle - \frac{1}{2} \langle U_0^2, g^2 \rangle.
\end{align*}
On the other hand, since $L_2^{-1} g U_0' = \frac{s}{2} g'$, we also obtain 
\begin{align*}
4 \langle g U_0', L_2^{-1} g U_0' \rangle &= 2 s \langle g U_0', g' \rangle \\
 &= - s \langle U_0'', g^2 \rangle \\
  &= -c s  \langle U_0, g^2 \rangle + \frac{s}{2} \langle U_0^2, g^2 \rangle.
\end{align*}
Substituting these computations into the left-hand side of (\ref{constraint-transl}), we obtain the zero result.

For the subspace $(c_1,c_2) = (0,1)$, we get 
\[
\lambda^{(2)}=-4\frac{\langle g^2,L_{1}^{-1}g^2\rangle}{\Vert g\Vert^2_{L^2}}
\]
which shows that $\lambda = a^2 \lambda^{(2)} + \mathcal{O}(a^3) > 0$ if $\langle g^2,L_{1}^{-1}g^2\rangle < 0$ and $\lambda = a^2 \lambda^{(2)} + \mathcal{O}(a^3) < 0$ if $\langle g^2,L_{1}^{-1}g^2\rangle > 0$. Since $L_J^{(0)}$ has only one negative eigenvalue at $U = U_0$, the splitting of the double zero eigenvalue 
of $L_J^{(0)}$ is clarified above, and the other eigenvalues of $L_J^{(0)}$ are strictly positive, the continuity of eigenvalues in $a$ implies that the Morse index of $L_J$ is $1$ if $\langle g^2,L_{1}^{-1}g^2\rangle < 0$ and $2$ if $\langle g^2,L_{1}^{-1}g^2\rangle > 0$ for small $a \neq 0$. 

Referring back to the Hessian operator $\mathcal{L}$ given by (\ref{Hessian}), it follows that 
the operator 
$$
L_2 = -\frac{2s}{k} (\partial_{\xi}^2 + k U + \Omega)
$$ 
admits a simple zero eigenvalue due to the rotational symmetry with $L_2 A = 0$. The rest of $L_2$ at $U = U_0$ and $\Omega = \Omega_c$ is strictly positive, 
hence the Morse index of $L_2$ is $0$ for small $a$. This yields the claim on the Morse index of $\mathcal{L}$.
On the other hand, $\mathcal{L}$ has a double zero eigenvalue associated with the translational and rotational symmetries, and the splitting 
of the double zero eigenvalue of $L_J^{(0)}$ clarified above shows that the kernel of $\mathcal{L}$ is exactly double for small $a$. This yields the 
claim about the nullity index of $\mathcal{L}$.
\end{proof}

We next clarify the constrained minimization properties of the bifurcating branch by using information about 
the Morse and nullity indices from Proposition \ref{theorem-Morse-first}.

\begin{proposition}
    \label{theorem-stability-first}
    Let $(U,A)$ be the profile of the bifurcating branch for $\Omega$ near $\Omega_c$ given by Proposition \ref{theorem-bifurcation-first}. 
If either $\langle g^2, L_1^{-1} g^2 \rangle < 0$ or $\langle g^2, L_1^{-1} g^2 \rangle > 0$, the bifurcating branch is a local minimizer of the constrained energy $H$ for fixed momentum $P$ and mass $Q$ degenerate only by the translational and rotational symmetries. 
\end{proposition}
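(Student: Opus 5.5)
The plan is to use the constrained-minimization criterion from Theorem 2.7 in \cite{Dmitry}, as in the proof of Proposition \ref{theorem-uncoupled}. With two constraints (momentum $P$ and mass $Q$), the profile is a local constrained minimizer, degenerate only by the two symmetries, if and only if two conditions hold. First, the nullity index of $\mathcal{L}$ equals the number of symmetries; Proposition \ref{theorem-Morse-first} already gives this, with nullity $2$. Second, the Morse index $n(\mathcal{L})$ equals the number of negative eigenvalues of the $2\times 2$ matrix
$$
D = \begin{pmatrix} \partial_c P & \partial_\Omega P \\ -\partial_c Q & -\partial_\Omega Q \end{pmatrix},
$$
where $P$ and $Q$ are evaluated along the two-parameter family $(U,A)(c,\Omega)$. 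Equivalently, the Hessian of the constrained energy, restricted to the tangent space of the constraints, must have zero negative directions. Here $\Omega$ is the frequency conjugate to $-Q$ in (\ref{action}). The exact signs in $D$ should be fixed from the identities
$$
\mathcal{L}\,\partial_c (U,A,0) = -\nabla P, \qquad \mathcal{L}\,\partial_\Omega (U,A,0) = \nabla Q,
$$
obtained by differentiating the Euler--Lagrange equations. This means we need $n(D) = 1$ when $\langle g^2,L_1^{-1}g^2\rangle<0$ and $n(D) = 2$ when $\langle g^2,L_1^{-1}g^2\rangle>0$.

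The computation is done with the expansions (\ref{decomposition-bif}), (\ref{near-identity}) and (\ref{delta-Omega}). We have
$$
Q = \frac{1}{k}\|A\|^2 = \frac{a^2}{k}\|g\|^2 + \mathcal{O}(a^4), \qquad
a^2 = -\frac{\delta\Omega\,\|g\|^2}{k\langle g^2,L_1^{-1}g^2\rangle} + \mathcal{O}(\delta\Omega^2).
$$
Hence
$$
\partial_\Omega Q = -\frac{\|g\|^4}{k^2\langle g^2,L_1^{-1}g^2\rangle} + \mathcal{O}(a^2),
$$
which is $\mathcal{O}(1)$ and has sign opposite to $\langle g^2,L_1^{-1}g^2\rangle$. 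For the momentum, $P = P(U_0,0) + \mathcal{O}(a^2) = 12c^{3/2} + \mathcal{O}(a^2)$ by (\ref{momentum-zero}). So $\partial_c P = 18 c^{1/2} + o(1) > 0$, while $\partial_\Omega P$ is bounded.

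We also need $\partial_c Q$. By scaling, $\Omega_c$ is proportional to $c$, and $Q$ is a smooth function of $\Omega - \Omega_c(c)$ to leading order. Therefore $\partial_c Q = -\Omega_c'(c)\,\partial_\Omega Q + o(1)$, which is also $\mathcal{O}(1)$. The cleanest route is to use the scaling symmetry of (\ref{ode-system}): write
$$
U(\xi;c,\Omega) = c\,\widetilde U(\sqrt{c}\,\xi;\Omega/c)
$$
and similarly for $A$. This expresses $P$ and $Q$ as powers of $c$ times functions of $\mu=\Omega/c$, so $\det D$ can be computed exactly at leading order. I expect
$$
\det D = -\partial_c P\,\partial_\Omega Q + \partial_\Omega P\,\partial_c Q,
$$
and the leading contribution is $-18c^{1/2}\,\partial_\Omega Q$. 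The cross term carries an extra factor $\mathcal{O}(a^2)$ from $\partial_\mu P$ times an $\mathcal{O}(1)$ quantity, so it is subleading. I will verify this carefully, because $\partial_\Omega P$ need not be small a priori; it is $\mathcal{O}(1)\cdot\mathcal{O}(1)$ unless the $a^2$-correction to $P$ is tracked.

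The sign count then follows. When $\langle g^2,L_1^{-1}g^2\rangle<0$, we have $\partial_\Omega Q>0$, so $\det D<0$ and $D$ has exactly one negative eigenvalue, matching Morse index $1$. When $\langle g^2,L_1^{-1}g^2\rangle>0$, we have $\partial_\Omega Q<0$ and $\det D>0$. The trace is $\partial_c P - \partial_\Omega Q$, and its sign must be checked to show that both eigenvalues are negative, matching Morse index $2$. This sign bookkeeping is the main obstacle: the orientation in $D$ must be set so that the positive $\partial_c P$ contributes a negative eigenvalue, consistent with the uncoupled case (Morse index $1$ cancelled by $\partial_c P>0$). I will fix the convention first by reproducing the uncoupled result, and then apply it to the $2\times2$ case. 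In both cases the counts agree, so Theorem 2.7 in \cite{Dmitry} gives local constrained minimality, degenerate only by the translational and rotational symmetries.
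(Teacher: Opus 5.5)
Your skeleton matches the paper's: take $n(\mathcal L)$ and $z(\mathcal L)=2$ from Proposition \ref{theorem-Morse-first}, correct them with a $2\times 2$ slope matrix, and use $\det D=-18\sqrt c\,\partial_\Omega Q+\mathcal O(a^2)$. The trouble is that your counting rule has the wrong sign, and it fails in one of the two cases. Your $D$ equals $S\mathbf D S$ with $S=\mathrm{diag}(1,-1)$, where $\mathbf D$ is the paper's matrix of derivatives in $(c,|\Omega|)$. Your own identities give $\langle \mathcal L^{-1}\nabla F_i,\nabla F_j\rangle = S(-D)S$ for $F=(P,Q)$, and it is the negative eigenvalues of this Gram matrix that get subtracted. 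So the correct count is $n(\hat{\mathcal L})=n(\mathcal L)-p_0-z_0$, where $p_0$ is the number of \emph{positive} eigenvalues of $D$. This is Theorem 3.2 of \cite{Dmitry} as the paper uses it, and it is also what your uncoupled calibration forces, since there $\partial_cP>0$ cancels one negative direction. Under your stated rule, the case $\langle g^2,L_1^{-1}g^2\rangle>0$ breaks. There $D_{22}=-\partial_\Omega Q=\|g\|^4/(k^2\langle g^2,L_1^{-1}g^2\rangle)>0$ and $\det D>0$, and $D$ is symmetric ($\partial_\Omega P=-\partial_cQ$). Hence both eigenvalues are positive, no trace check can make them negative, and you would get $n(D)=0\neq 2$. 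With the correct rule you get $p_0=1$, resp.\ $2$, and $z_0=0$, so $n(\hat{\mathcal L})=0$ and $z(\hat{\mathcal L})=2$.

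Two of your intermediate claims are also false at fixed $\Omega$: that $\partial_cP=18\sqrt c+o(1)$, and that the cross term is subleading. Because $\Omega_c\propto c$, $\partial_c(a^2)$ at fixed $\Omega$ is of order one and nonzero. The correction $a^2\langle U_0,w_2\rangle$ in $P$ therefore contributes at order one to $\partial_cP$, and $\partial_\Omega P$ and $\partial_cQ$ are both of order one. For $k=\frac16$ one has $P=96|\Omega|^{3/2}$, so $\partial_cP=0$ and the entire determinant comes from the cross term. The leading term $-18\sqrt c\,\partial_\Omega Q$ survives only through an exact cancellation. To see it, treat $(c,a^2)$ as independent variables; the chain rule gives $\det D=\partial_\Omega(a^2)\,[\partial_{a^2}P\,\partial_cQ-\partial_cP\,\partial_{a^2}Q]$ with $c$-derivatives taken at fixed $a^2$. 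In these variables $\partial_cP=18\sqrt c+\mathcal O(a^2)$ and $\partial_cQ=\mathcal O(a^2)$, which is exactly the paper's computation. Your scaling route also works: with $P=c^{3/2}p(\Omega/c)$ and $Q=c^{3/2}q(\Omega/c)$ one gets $\det D=\frac{3c}{2}(p'q-pq')$. The small factor there is $q=\mathcal O(a^2)$, not $\partial_\mu P$. In the case $\det D>0$, take the sign from $D_{22}$, as the paper does, not from $\partial_cP$.
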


\begin{proof}
We use again the decomposition (\ref{decomposition-bif}) with $\Omega = \Omega_c + \delta \Omega(a)$, where $(\mathfrak{w}_1(a), \mathfrak{z}_1(a)) \in H^2_{\rm even}(\mathbb{R}) \times H^2_{\rm even}(\mathbb{R})$ satisfy the bounds (\ref{bounds-on-terms}) 
and $\delta \Omega(a)$ is given by the expansion (\ref{delta-Omega}). Hence, we 
compute 
\begin{align}
\left\{ \begin{array}{l}
    P(U,0) = \frac{1}{2} \| U_0 \|^2_{L^2} + \langle U_0, \mathfrak{w}_1(a) \rangle + \frac{1}{2} \| \mathfrak{w}_1(a) \|^2_{L^2}, \\
    Q(A) = \frac{s}{k} \left( a^2 \| g \|^2_{L^2} + 2 a \langle g, \mathfrak{z}_1(a) \rangle + \| \mathfrak{z}_1(a) \|^2_{L^2} \right).
    \end{array} \right.
    \label{P-Q-expansion}
\end{align}
We recall the action functional (\ref{action}) rewritten as 
$$
\Lambda(U,\Psi) = H(U,A) + c P(U,0) + |\Omega| Q(A)
$$
To incorporate the constraints of fixed momentum $P$ and mass $Q$ in the minimization of energy $H$, we consider the Hessian operator $\mathcal{L}$
on the constrained subspace of $(L^2(\mathbb{R}))^3$ 
for the perturbation $(w,z_1,z_2)$ satisfying the following two constraints:
$$
\langle U, w \rangle = 0 \quad \mbox{\rm and} \quad \langle A, z_1 \rangle = 0,
$$
where $w$ is the perturbation to $U$ and $z_1 + i z_2$ is the perturbation to $A$ according to (\ref{variables-Psi-z}).
The constrained Hessian operator is denoted by \(\hat{\mathcal{L}}\). 
By Theorem 3.2 in \cite{Dmitry}, the Morse index $n(\hat{\mathcal{L}})$
and the nullity index $z(\hat{\mathcal{L}})$ are computed as 
\begin{align}
\left\{ \begin{array}{l}
n(\hat{\mathcal{L}}) = n(\mathcal{L})-p_0-z_0, \\
z(\hat{\mathcal{L}}) = z(\mathcal{L}) + z_0,
    \end{array} \right.
\label{index-count}
\end{align}
where $p_0$ and $z_0$ are the number of positive and zero  eigenvalues of the matrix ${\bf D}$ given by 
\begin{align*}
{\bf D} = 
\begin{pmatrix}
    \frac{\partial P(U,0)}{\partial c} & \frac{\partial P(U,0)}{\partial |\Omega|} \\
    \frac{\partial Q(A)}{\partial c} & \frac{\partial Q(A)}{\partial |\Omega|}   
\end{pmatrix}
\end{align*}
For the bifurcating branch, we use parameters $c$ and $a^2$ with the dependence $\Omega = \Omega_c + \delta \Omega(a)$ 
expressed in even powers of the small amplitude $a$. Hence, we introduce the function $(c,\Omega) \to a^2$ by solving the 
implicit equation $\Omega = \Omega_c + \delta \Omega(a)$ for $\Omega$ close to $\Omega_c$. By using the chain rule, 
we obtain 
\begin{align*}
{\bf D} = \begin{pmatrix}
    \frac{\partial P(U,0)}{\partial c} + \frac{\partial P(U,0)}{\partial a^2} \frac{\partial a^2}{\partial c} 
    & -\frac{\partial P(U,0)}{\partial a^2} \frac{\partial a^2}{\partial \Omega} \\
    \frac{\partial Q(A)}{\partial c} + \frac{\partial Q(A)}{\partial a^2} \frac{\partial a^2}{\partial c} 
    & -\frac{\partial Q(A)}{\partial a^2} \frac{\partial a^2}{\partial \Omega},  
\end{pmatrix}
\end{align*}
It follows from (\ref{delta-Omega}) that 
\begin{align*}
\frac{\partial a^2}{\partial c} &= \frac{\| g \|^2_{L^2}}{s k \langle g^2, L_1^{-1} g^2 \rangle} \frac{d \Omega_c}{d c} + \mathcal{O}(a^2), \\
\frac{\partial a^2}{\partial \Omega} &= -\frac{\| g \|^2_{L^2}}{s k \langle g^2, L_1^{-1} g^2 \rangle} + \mathcal{O}(a^2),
\end{align*}
where $\langle g^2, L_1^{-1} g^2 \rangle \neq 0$ is assumed. 
By using (\ref{momentum-zero}) and (\ref{near-identity}) in (\ref{P-Q-expansion}), we obtain 
\begin{align*}
 \frac{\partial P(U,0)}{\partial c} &= 18 \sqrt{c} + \mathcal{O}(a^2), \\
  \frac{\partial P(U,0)}{\partial a^2} &= \langle U_0, w_2 \rangle + \mathcal{O}(a^2), \\
  \frac{\partial Q(A)}{\partial c} &= \mathcal{O}(a^2), \\
  \frac{\partial Q(A)}{\partial a^2} & = \frac{s}{k} \| g \|^2_{L^2} + \mathcal{O}(a^2).
\end{align*}
Computing $\det {\bf D}$ yields a simpler formula 
\begin{align*}
    \det {\bf D} &= \frac{\partial a^2}{\partial \Omega} \left[ \frac{\partial P(U,0)}{\partial a^2} \frac{\partial Q(A)}{\partial c} - \frac{\partial P(U,0)}{\partial c} \frac{\partial Q(A)}{\partial a^2} \right] \\
    &= \frac{18 \sqrt{c} \| g \|^4_{L^2}}{k^2 \langle g^2, L_1^{-1} g^2 \rangle} + \mathcal{O}(a^2).
\end{align*}

\underline{If $\langle g^2, L_1^{-1} g^2 \rangle < 0$,} then $\det {\bf D} < 0$ for small $a$ and ${\bf D}$ has one positive and one negative eigenvalues. 
By Proposition \ref{theorem-Morse-first}, we have $z(\mathcal{L}) = 2$ and $n(\mathcal{L}) = 1$ in this case. Since $z_0 = 0$ and $p_0 = n(\mathcal{L}) = 1$, the count (\ref{index-count}) implies that 
the bifurcating branch is a local minimizer of the constrained energy $H$ for fixed momentum $P$ and mass $Q$. 

\underline{If $\langle g^2, L_1^{-1} g^2 \rangle > 0$,} then $\det {\bf D} > 0$ for small $a$ and since the second diagonal entry is positive, 
\begin{align*}
    -\frac{\partial Q(A)}{\partial a^2} \frac{\partial a^2}{\partial \Omega} = \frac{\| g \|^4_{L^2}}{k^2 \langle g^2, L_1^{-1} g^2 \rangle} + \mathcal{O}(a^2) > 0,
\end{align*}
${\bf D}$ has two positive eigenvalues. By Proposition \ref{theorem-Morse-first}, we have $z(\mathcal{L}) = 2$ and $n(\mathcal{L}) = 2$ in this case. 
Since $z_0 = 0$ and $p_0 = n(\mathcal{L}) = 2$, the count (\ref{index-count}) implies that the bifurcating branch is a local minimizer of the constrained energy $H$ for fixed momentum $P$ and mass $Q$. 

In either case, we have $z(\hat{\mathcal{L}}) = z(\mathcal{L}) = 2$, 
which implies that the local minimizer of the constrained energy is only degenerate due to the translational and rotational 
symmetries of the coupled KdV--LS system (\ref{KdV-NLS}). 
\end{proof}

\begin{remark}
We have two different cases of the pitchfork bifurcation.
	\begin{itemize}
		\item If $\langle g^2, L_1^{-1} g^2 \rangle < 0$, the bifurcating branch exists for $\Omega > \Omega_c$ and it inherits 
the minimizer property of the constrained energy from the branch of uncoupled KdV solitons, which becomes a saddle point of the constrained energy for $\Omega > \Omega_c$ by Proposition \ref{theorem-uncoupled}. 
This bifurcation is classified as the subcritical pitchfork bifurcation.

\item If $\langle g^2, L_1^{-1} g^2 \rangle > 0$, the bifurcating branch exists for $\Omega < \Omega_c$, where the branch of uncoupled KdV solitons is also a minimizer of the constrained energy by Proposition \ref{theorem-uncoupled}. This bifurcation can be classified as the supercritical pitchfork bifurcation, 
but it differs from the standard supercritical pitchfork bifurcation in the absence of symmetries, where the bifurcating branch is usually unstable. 
	\end{itemize}
Examples of pitchfork bifurcations where both the primary and bifurcating branches can be stable or unstable for the same parameters were given 
for a generalized NLS equation in \cite{Yang}.
\label{rem-bifucations}
\end{remark}

\begin{remark}
By using the exact solution (\ref{solution-2}) for $k = \frac{1}{6}$ and $s = 1$, we compute 
\begin{align*} 
P(U,0) &= 96 |\Omega|^{3/2}, \\ 
Q(A) &= 144(c - 4|\Omega|) |\Omega|^{1/2}.
\end{align*}
This yields the expression for ${\bf D}$ in 
\begin{align}
\label{D-exact}
{\bf D} = 
\begin{pmatrix}
    \frac{\partial P(U,0)}{\partial c} & \frac{\partial P(U,0)}{\partial |\Omega|} \\
    \frac{\partial Q(A)}{\partial c} & \frac{\partial Q(A)}{\partial |\Omega|}   
\end{pmatrix}
= \begin{pmatrix}
    0 & 144 |\Omega|^{1/2} \\
    144 |\Omega|^{1/2} & 72 |\Omega|^{-1/2} (c - 16 |\Omega|)  
\end{pmatrix}
\end{align}
Since $\det {\bf D} < 0$, ${\bf D}$ has one positive and one negative eigenvalue. At the same time, Morse index for the exact solution (\ref{solution-2}) is equal to $1$, see Lemma \ref{lem-exact-2} below, hence the computations in the proof of Proposition \ref{theorem-stability-first} are in agreement with (\ref{D-exact}). Furthermore, since 
$$
k = \frac{1}{6} : \quad \Omega = -\frac{c}{4} + \frac{a^2}{12 c} + \mathcal{O}(a^4)
$$
and 
$$
\langle U_0, w_2 \rangle = \frac{24}{\sqrt{c}} \int_{\mathbb{R}} W(y) {\rm sech}^2(y) dy = -\frac{6}{\sqrt{c}},
$$
where $W$ is given by (\ref{W-expression}), we obtain 
\begin{align*}
\frac{\partial P(U,0)}{\partial c} + \frac{\partial P(U,0)}{\partial a^2} \frac{\partial a^2}{\partial c} &= 
18 \sqrt{c} + 3 c \langle U_0, w_2 \rangle + \mathcal{O}(a^2) = \mathcal{O}(a^2),
\end{align*}
in agreement with the first diagonal term of ${\bf D}$ in (\ref{D-exact}) being zero.
\label{remark-exact-2}
\end{remark}

\subsection{The Hessian operator for the exact solution (\ref{solution-2})}

Here, we verify that the Hessian operator $\mathcal{L}$ 
at the exact solution (\ref{solution-2}) for $k = \frac{1}{6}$ and $s = 1$ has 
a simple negative eigenvalue and a double zero eigenvalue for the entire existence 
interval $\Omega \in (\Omega_c,0)$, where $\Omega_c = -\frac{c}{4}$. Due to the exact computation 
of ${\bf D}$ in (\ref{D-exact}), see Remark \ref{remark-exact-2}, 
this suggests that the exact solution (\ref{solution-2}) is a local 
minimizer of the constrained energy $H$ for fixed momentum $P$ and mass $Q$ degenerate only by the translational and rotational symmetries. This result 
recovers the orbital stability proven in \cite{Chen} and extends the results of Propositions \ref{theorem-Morse-first} and \ref{theorem-stability-first} beyond the local 
bifurcation limit along the exact solution (\ref{solution-2}).

Let $k = \frac{1}{6}$ and $s = 1$. By using (\ref{solution-2}), we compute the block $\mathcal{L}_J$ of the Hessian operator $\mathcal{L}$ given by (\ref{block-1}) explicitly:
\begin{equation*}
\mathcal{L}_J = \begin{pmatrix} -\partial_{\xi}^2 + c - 12 |\Omega| {\rm sech}^2(|\Omega|^{1/2} \xi)  & -4 \sqrt{3|\Omega|(c - 4 |\Omega|)} {\rm sech}(|\Omega|^{1/2} \xi)  \\ -4 \sqrt{3|\Omega|(c - 4 |\Omega|)} {\rm sech}(|\Omega|^{1/2} \xi) & 
12 \left( - \partial_{\xi}^2 + |\Omega| - 2 |\Omega| {\rm sech}^2(|\Omega|^{1/2} \xi) \right).
\end{pmatrix}
\end{equation*}
The exact solution (\ref{solution-2}) exists for $\Omega \in \left( -\frac{c}{4},0\right)$. With a change of variables 
$$
\eta = \sqrt{|\Omega|} \xi, \quad \gamma = \frac{c}{|\Omega|}, 
$$
we get $\mathcal{L}_J = |\Omega| L_J(\gamma)$, where $L_J(\gamma)$ is given by 
\begin{equation}
    \label{Hessian-exact-normalized}
L_J(\gamma) = \begin{pmatrix} -\partial_{\eta}^2 + \gamma - 12 {\rm sech}^2(\eta)  & -4 \sqrt{3 (\gamma - 4)} {\rm sech}(\eta)  \\ -4 \sqrt{3 (\gamma - 4)} {\rm sech}(\eta)  & 
12 \left( - \partial_{\eta}^2 + 1 - 2 {\rm sech}^2(\eta) \right).
\end{pmatrix}
\end{equation}
The only parameter $\gamma > 4$ parametrizes $L_J(\gamma)$ and the result of Proposition \ref{theorem-Morse-first} suggests 
that for small $|\gamma - 4|$, $L_J(\gamma)$ has a simple negative eigenvalue near $-5$, a simple zero eigenvalue, and at least two positive 
eigenvalue (one is close to $0$ and the other one is close to $3$) below the essential spectrum 
on $[\gamma,\infty) \cup [12,\infty)$. 
The following lemma guarantees the persistence of this result for every $\gamma > 4$. 



\begin{lemma}
    \label{lem-exact-2}
The linear operator $L_J(\gamma)$ admits simple negative and zero eigenvalues for every $\gamma > 4$.
\end{lemma}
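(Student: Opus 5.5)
Since $L_J(\gamma)$ commutes with the parity operator, I plan to split $(L^2(\mathbb{R}))^2$ into the subspaces of even and odd pairs $(f,h)$ and count negative and zero eigenvalues separately in each. The expected outcome is as follows. On the odd subspace there is only the simple zero eigenvalue with eigenvector $(U',A')$, which after rescaling is $({\rm sech}^2(\eta))',({\rm sech}(\eta))'$ up to constants. On the even subspace there is exactly one negative eigenvalue and no zero eigenvalue. The rotational zero mode $(0,A)$ belongs to the separate scalar block $L_2$ and not to $L_J$, so $L_J$ should have a simple kernel. The parameter dependence should be controlled by a continuation argument in $\gamma\in(4,\infty)$. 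At $\gamma\to 4^+$ the statement follows from Proposition \ref{theorem-Morse-first}, since the off-diagonal coupling vanishes there and the eigenvalues of $\mathrm{diag}(T_1,12T)$ are known from Lemma \ref{lem-Schr}. The number of negative eigenvalues in each parity class can change only when an eigenvalue crosses zero, because the essential spectrum $[\min(\gamma,12),\infty)$ stays strictly positive. So the key step is to show that the kernel of $L_J(\gamma)$ restricted to each parity class has constant dimension for all $\gamma>4$: one in the odd class and zero in the even class.

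To characterize the kernel, I would use the explicit structure of the operator. Write $L_J(\gamma)(f,h)^T=0$ as the coupled system
$$-f''+\gamma f-12\,{\rm sech}^2\eta\, f=4\sqrt{3(\gamma-4)}\,{\rm sech}\eta\, h,$$
$$-h''+h-2\,{\rm sech}^2\eta\, h=\tfrac{1}{3}\sqrt{3(\gamma-4)}\,{\rm sech}\eta\, f.$$
Changing variables to $x=\tanh\eta$ turns both equations into Legendre-type equations with polynomial-times-${\rm sech}$ solution structure. The integrability for $k=\frac16$, through the second invariant (\ref{second-invariant}), indicates that the decaying solutions can be written down exactly. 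I would then count the dimension of the decaying solution space directly. A fourth-order system has a two-dimensional space of solutions decaying at $+\infty$. Decaying solutions are obtained by matching even or odd data at $\eta=0$, and the task is to check that the matching determinant is nonzero for the even class and that the odd class has a one-dimensional kernel. Concretely, in the even class I would search for solutions of the form $f={\rm sech}^{\sqrt\gamma}\eta\,P(\tanh\eta)$, $h={\rm sech}\,\eta\,R(\tanh\eta)$ through hypergeometric reductions and show that no nontrivial decaying even solution exists. A cleaner alternative is available for the even class. Differentiating the solution family with respect to its parameters gives $L_J(\partial_c U,\partial_c A)=(-U,0)$ and $L_J(\partial_\Omega U,\partial_\Omega A)=(0,cA/\ldots)$, with the constants to be fixed. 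If an even kernel element $v$ existed, then $\langle v,(U,0)\rangle=\langle v,(0,A)\rangle=0$. Combining this with the fact that ${\bf D}$ in (\ref{D-exact}) is nondegenerate for all $\gamma>4$ gives a contradiction through a standard argument: if a kernel element appeared, a branch would bifurcate and ${\bf D}$ would degenerate. I am not fully certain of this argument, so my first attempt would be the explicit ODE analysis.

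The negative count would then be fixed at its limiting value $1$ as $\gamma\to4^+$, taken from Proposition \ref{theorem-Morse-first}; in that limit the relevant sign is $\langle g^2,L_1^{-1}g^2\rangle<0$, as implied by Remark \ref{remark-exact-2}. Since the count cannot change without a zero crossing, it stays equal to $1$. As a consistency check I would verify that the count does not jump as $\gamma\to\infty$. The \emph{main obstacle} is ruling out an even zero eigenvalue for every $\gamma>4$. I expect the explicit reduction using $x=\tanh\eta$, or a factorization tied to the Melnikov integrable structure, to be the decisive step.
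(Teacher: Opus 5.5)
Your global strategy matches the paper's. You start from $\gamma\to4^+$, where Proposition~\ref{theorem-Morse-first} (with $\langle g^2,L_1^{-1}g^2\rangle<0$ at $p=1$) gives exactly one negative eigenvalue and a one-dimensional kernel. You then use that the essential spectrum $[\min(\gamma,12),\infty)$ stays away from zero, so the Morse index can only change through the kernel. The parity splitting is a sound refinement, and your expected picture (odd sector: only the translational mode; even sector: one negative eigenvalue, trivial kernel) agrees with the small-amplitude analysis.

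There is a genuine gap, however. The entire content of the lemma is that $\ker L_J(\gamma)$ stays one-dimensional for \emph{every} $\gamma>4$, and this is exactly what your proposal leaves open. The explicit ODE route is only announced, and the ansatz you name does not fit the problem. The solutions decaying at $+\infty$ form a two-dimensional space with two different profiles, $(f,h)\sim(e^{-2\eta},e^{-\eta})$ and $(f,h)\sim(e^{-\sqrt{\gamma}\,\eta},e^{-(1+\sqrt{\gamma})\eta})$. Already the known kernel element, with $f\propto\tanh\eta\,{\rm sech}^2\eta$, is of the first type. It is therefore not of the form ${\rm sech}^{\sqrt{\gamma}}\eta\,P(\tanh\eta)$ with $P$ a polynomial when $\gamma>4$, as your ``polynomial-times-sech'' heuristic would require. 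No matching determinant is ever computed.

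Your fallback argument is invalid. The identities $L_J(\partial_cU,\partial_cA)=(-U,0)$ and $L_J(\partial_\Omega U,\partial_\Omega A)=(0,\tfrac{2s}{k}A)$ only say that $(U,0)$ and $(0,A)$ lie in the range of the self-adjoint $L_J$. So every kernel element is automatically orthogonal to them, and for exactly this reason the entries of $\mathbf{D}$ are blind to an extra kernel element. Invertibility of $\mathbf{D}$ gives $z_0=0$ in $z(\hat{\mathcal L})=z(\mathcal L)+z_0$ and says nothing about $z(\mathcal L)$. There is no principle that a new kernel element forces $\mathbf{D}$ to degenerate. For instance, on the primary branch (where only $P$ is constrained) the kernel of $\mathcal L_J$ jumps to ${\rm span}\{(U_0',0),(0,g)\}$ at $\Omega=\Omega_c$, while $\frac{d}{dc}P(U_0,0)=18\sqrt{c}\neq0$.

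The idea you are missing is the paper's Schur-complement reduction. For $\gamma\neq9$ the operator $L_1(\gamma)=-\partial_\eta^2+\gamma-12\,{\rm sech}^2\eta$ (eigenvalues $\gamma-9$, $\gamma-4$, $\gamma-1$) is invertible. Kernel elements of $L_J(\gamma)$ therefore correspond to solutions of the generalized eigenvalue problem $L_2z=\mu\,\mathcal{K}z$, where $\mu=4(\gamma-4)$, $L_2=-\partial_\eta^2+1-2\,{\rm sech}^2\eta\ge0$, and $\mathcal{K}={\rm sech}\,\eta\,(L_1(\gamma))^{-1}{\rm sech}\,\eta$ is compact and self-adjoint.

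The decisive computation concerns exactly the even direction you single out as the obstacle, $\ker L_2={\rm span}\{{\rm sech}\,\eta\}$. The number $\langle(L_1(\gamma))^{-1}{\rm sech}^2\eta,{\rm sech}^2\eta\rangle$ equals $-\frac14$ at $\gamma=4$. Its $\gamma$-derivative is $-\|(L_1(\gamma))^{-1}{\rm sech}^2\eta\|^2<0$, and it is positive for $\gamma>9$, where $L_1(\gamma)>0$. Hence it never vanishes on $(4,9)\cup(9,\infty)$.

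This lets one project off ${\rm sech}\,\eta$ and reduce to a strictly positive operator against a compact one. The paper then argues that the eigenvalue $\mu=4(\gamma-4)$ is carried only by the translational eigenfunction $\tanh\eta\,{\rm sech}\,\eta$. The point $\gamma=9$ is handled separately via $\ker L_1(9)={\rm span}\{{\rm sech}^3\eta\}$. Without an argument of this kind, or a fully executed explicit solution of the fourth-order system, the continuation step in your proposal has nothing to stand on.
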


\begin{proof}
We recall that $L_J(\gamma)$ in (\ref{Hessian-exact-normalized}) admits a zero eigenvalue for every $\gamma > 4$ due to the translational mode 
\begin{equation}
    \label{translation}
\mathcal{L}_J \begin{pmatrix} U' \\ A' \end{pmatrix} = \begin{pmatrix} 0 \\ 0 \end{pmatrix},
\end{equation}
which exists for every $\Omega \in \left( -\frac{c}{4},0\right)$. To prove the assertion, we show that the zero eigenvalue of $L_J(\gamma)$ remains simple for all $\gamma > 4$. This implies by continuity of eigenvalues in $\gamma$ that 
$\mathcal{L}_J(\gamma)$ admits a simple negative eigenvalue for every $\gamma > 4$ since this is true for small $\gamma \gtrsim 4$. 

By looking at the zero eigenvalue of $L_J(\gamma)$ with the eigenvector $(w,z)^{\perp}$, we take  $w = 4 \sqrt{3(\gamma - 4)} \upsilon$ and rewrite the spectral problem in the equivalent form:
\begin{equation}
\label{system-closed}
    \left\{ \begin{array}{l} L_1(\gamma) \upsilon = {\rm sech}(\eta) z, \\
    L_2 z = 4(\gamma - 4) {\rm sech}(\eta) v. \end{array} \right.
\end{equation}
where $L_1(\gamma) = -\partial_{\eta}^2 + \gamma - 12 {\rm sech}^2(\eta)$ and $L_2 = -\partial_{\eta}^2 + 1 - 2 {\rm sech}^2(\eta)$ 
are two self-adjoint Schr\"{o}dinger operators in $L^2(\mathbb{R})$ with the domains in $H^2(\mathbb{R})$. Due to the translational mode 
(\ref{translation}), we can find one decaying solution of (\ref{system-closed}) in the form
\begin{equation}
    \label{translation-0}
\upsilon_0 = \tanh(\eta) {\rm sech}^2(\eta), \quad z_0 =  (\gamma - 4) \tanh(\eta) {\rm sech}(\eta).
\end{equation}

The operator $L_1(\gamma) : H^2(\mathbb{R}) \subset L^2(\mathbb{R}) \to L^2(\mathbb{R})$ is invertible for every $\gamma \in (4,9) \cup (9,\infty)$. 
This implies that for every $z \in L^2(\mathbb{R})$, there exists a unique $\upsilon = (L_1(\gamma))^{-1} {\rm sech}(\eta) z$ in $H^2(\mathbb{R})$ 
so that the system (\ref{system-closed}) can be rewritten as the generalized eigenvalue problem 
\begin{equation}
\label{generalized-eig}
    L_2 z = \mu \mathcal{K} z, \quad \mathcal{K} = {\rm sech}(\eta) (L_1(\gamma))^{-1} {\rm sech}(\eta), \quad \mu = 4(\gamma - 4).
\end{equation}
The linear operator $\mathcal{K} : L^2(\mathbb{R}) \to L^2(\mathbb{R})$ is self-adjoint and compact due to the exponential decay 
of ${\rm sech}(\eta)$ as $|\eta| \to \infty$ \cite[Section 5.6]{Gustafson}. Furthermore, $L_2 : H^2(\mathbb{R}) \subset L^2(\mathbb{R}) \to L^2(\mathbb{R})$ 
is non-negative with a simple eigenvalue at $0$ for the eigenfunction ${\rm sech}(\eta)$. Consequently, the generalized eigenvalue 
problem (\ref{generalized-eig}) admits the zero eigenvalue $\mu = 0$ with the eigenfunction ${\rm sech}(\eta)$. We claim that $\mu = 0$ is algebraically simple for every $\gamma \in (4,9) \cup (9,\infty)$. By the Fredholm theorem, the simplicity of $\mu = 0$ implies 
that 
\begin{equation}
    \label{Fredholm-cond}
\langle \mathcal{K} {\rm sech}(\eta), {\rm sech}(\eta) \rangle \neq 0, 
\end{equation}
where
\begin{equation*}
\langle \mathcal{K} {\rm sech}(\eta), {\rm sech}(\eta) \rangle = \langle (L_1(\gamma))^{-1} {\rm sech}^2(\eta), {\rm sech}^2(\eta) \rangle.
\end{equation*}
Indeed, if $\gamma = 4$, we have the explicit formula, see (\ref{expression-W}) below,
$$
(L_1(4))^{-1} {\rm sech}^2(\eta) = \frac{1}{4} ( \eta \tanh(\eta) - 1) {\rm sech}^2(\eta),
$$
which yields 
$$
\langle (L_1(4))^{-1} {\rm sech}^2(\eta), {\rm sech}^2(\eta) \rangle = -\frac{1}{4}. 
$$
Because 
$$
\frac{d}{d\gamma} (L_1(\gamma))^{-1} = -(L_1(\gamma))^{-1} \frac{d}{d\gamma} L_1(\gamma) (L_1(\gamma))^{-1} = -(L_1(\gamma))^{-1} (L_1(\gamma))^{-1},
$$
it follows that 
\begin{align*}
\frac{d}{d \gamma} \langle (L_1(\gamma))^{-1} {\rm sech}^2(\eta), {\rm sech}^2(\eta) \rangle &= 
-\langle (L_1(\gamma))^{-1} (L_1(\gamma))^{-1} {\rm sech}^2(\eta), {\rm sech}^2(\eta) \rangle \\
&= -\| (L_1(\gamma))^{-1} {\rm sech}^2(\eta) \|^2 < 0.
\end{align*}
Hence, $\langle \mathcal{K} {\rm sech}(\eta), {\rm sech}(\eta) \rangle$ is monotonically decreasing for $\gamma \in (4,9)$ and remains negative. 
Since $L_1(9) {\rm sech}^3(\eta) = 0$, it is clear that $\lim_{\gamma \to 9^{\pm}} \langle \mathcal{K} {\rm sech}(\eta), {\rm sech}(\eta) \rangle = \pm \infty$. 
Then, $\langle \mathcal{K} {\rm sech}(\eta), {\rm sech}(\eta) \rangle$ is monotonically decreasing for $\gamma \in (9,\infty)$ and remains positive 
because $L_1(\gamma)$ is strictly positive for $\gamma > 9$. Thus, the Fredholm condition (\ref{Fredholm-cond}) 
for simplicity of $\mu = 0$ is proven for every $\gamma \in (4,9) \cup (9,\infty)$. 

Under the condition (\ref{Fredholm-cond}),  the two-term decomposition 
$$
z = z_{\perp} - \frac{\langle \mathcal{K} z_{\perp}, {\rm sech}(\eta) \rangle}{\langle \mathcal{K} {\rm sech}(\eta), {\rm sech}(\eta) \rangle} {\rm sech}(\eta), \quad \langle z_{\perp}, {\rm sech}(\eta) \rangle = 0
$$
reduces (\ref{generalized-eig}) to the form 
\begin{equation}
\label{generalized-eig-projected}
\Pi_0 L_2 \Pi_0 z_{\perp} = \mu \mathcal{K}_{\perp} z_{\perp}, \quad \mathcal{K}_{\perp} z_{\perp} = \mathcal{K} z_{\perp} - \frac{\langle \mathcal{K} z_{\perp}, {\rm sech}(\eta) \rangle}{\langle \mathcal{K} {\rm sech}(\eta), {\rm sech}(\eta) \rangle } \mathcal{K} {\rm sech}(\eta) 
\end{equation}
where $\Pi_0$ is an orthogonal projection from $L^2(\mathbb{R})$ to $L^2(\mathbb{R}) |_{\{ {\rm sech}(\eta)\}^{\perp}}$. 
Since $\Pi_0 L_2 \Pi_0$ is self-adjoint, strictly positive, and invertible with a bounded inverse in $L^2(\mathbb{R})$ 
and $\mathcal{K}_{\perp}$ is self-adjoint and compact in $L^2(\mathbb{R})$, the spectrum of the generalized eigenvalue problem 
(\ref{generalized-eig-projected}) is purely discrete and consists of real eigenvalues $\mu \neq 0$. 
Due to the translational mode (\ref{translation-0}), the set of eigenvalues includes the eigenvalue $\mu_0(\gamma) = 4 (\gamma - 4)$ 
with the eigenfunction $z_0 = (\gamma - 4) \tanh(\eta) {\rm sech}(\eta)$, for which $\mathcal{K}_{\perp} z_0 = \mathcal{K} z_0$.

Each eigenvalue $\mu$ of the generalized eigenvalue problem (\ref{generalized-eig}) is geometrically simple, 
because there is at most one linearly independent eigenfunction $z \in H^2(\mathbb{R})$ convergent to a one-dimensional 
subspace spanned $e^{-|\eta|}$ as $|\eta| \to \infty$ due to the exponential decay of ${\rm sech}(\eta)$ in $\mathcal{K}$. 
Moreover, each eigenvalue $\mu \neq 0$ of (\ref{generalized-eig}) with the eigenfunction $z \in H^2(\mathbb{R})$ is algebraically simple since 
$$
\langle \mathcal{K} z, z \rangle = \frac{1}{\mu} \langle L_2 z, z \rangle \neq 0
$$
since $L_2$ is non-negative and $z$ for $\mu \neq 0$ cannot be spanned by ${\rm sech}(\eta)$, the eigenfunction of $L_2$. 
This can be checked explicitly for $\mu_0(\gamma) = 4 (\gamma - 4)$ by using 
(\ref{translation-0}):
\begin{equation}
    \label{conclusion}
\langle \mathcal{K} z_0, z_0 \rangle = \langle \upsilon_0, {\rm sech}(\eta) z_0 \rangle =  (\gamma - 4) \int_{\mathbb{R}} \tanh^2(\eta) {\rm sech}^4(\eta) d\eta 
= \frac{4}{15} (\gamma - 4) > 0.
\end{equation}
Hence, the eigenvalue $\mu_0(\gamma) = 4 (\gamma - 4)$ is algebraically simple for every $\gamma \in (4,9) \cup (9,\infty)$.

To include the exceptional case $\gamma = 9$, we note that $L_1(9) {\rm sech}^3(\eta) = 0$ and $L_2 {\rm sech}(\eta) =0$. By using the decomposition 
$$
\upsilon = a {\rm sech}^3(\eta) + \upsilon_{\perp}, \quad z = b {\rm sech}(\eta) + z_{\perp}, \quad \langle {\rm sech}^3(\eta), \upsilon_{\perp} \rangle = \langle {\rm sech}(\eta), z_{\perp} \rangle = 0,
$$
we obtain $a$ and $b$ uniquely from projections of the right-hand sides of the system (\ref{system-closed}) to the orthogonal complements 
of the kernels of $L_1$ and $L_2$ to obtain 
$$
a = -\frac{\langle {\rm sech}^2(\eta),w_{\perp} \rangle}{\langle {\rm sech}^2(\eta), {\rm sech}^3(\eta) \rangle}, \quad 
b = -\frac{\langle {\rm sech}^4(\eta),z_{\perp} \rangle}{\langle {\rm sech}^4(\eta), {\rm sech}(\eta) \rangle}.
$$
Inverting $L_1(9)$ on the orthogonal subspace spanned by ${\rm sech}^3(\eta)$, we obtain 
$$
v_{\perp} = \Pi_1 (L_1(9))^{-1} \Pi_1 {\rm sech}(\eta) z_{\perp},
$$ 
where $\Pi_1$ is an orthogonal projection from $L^2(\mathbb{R})$ to $L^2(\mathbb{R}) |_{\{ {\rm sech}^3(\eta) \}^{\perp}}$. Substituting $v_{\perp}$ to the equation for $z_{\perp}$ yields a generalized eigenvalue problem 
\begin{equation*}
    \Pi_0 L_2 \Pi_0 z_{\perp} = \mu \mathcal{K}_{\perp} z_{\perp}, 
\end{equation*}
with $\mu = 4(\gamma - 4)$ and 
\begin{equation*}
\mathcal{K}_{\perp} = \Pi_0 {\rm sech}(\eta) \Pi_1 (L_1(9))^{-1} \Pi_1 {\rm sech}(\eta) \Pi_0.
\end{equation*}
The spectrum of the generalized eigenvalue problem is again purely discrete and each eigenvalue is geometrically simple. For 
the eigenvalue $\mu = 4 (\gamma - 4) |_{\gamma = 9} = 20$, the eigenfunction $z_0$ is 
odd in $\eta$, hence $a = b = 0$ and $\langle \mathcal{K}_{\perp} z_0, z_0 \rangle = \langle \mathcal{K} z_0, z_0 \rangle \neq 0$ as in (\ref{conclusion}). Hence, $\mu = 20$ remains algebraically simple at $\gamma = 9$. 
\end{proof}

\subsection{Numerical approximation of $\langle g^2, L_1^{-1} g^2 \rangle$}

Here, we check numerically the value of  $\langle g^2, L_1^{-1} g^2 \rangle$, which appears in Proposition \ref{theorem-bifurcation-first} to separate the 
supercritical and subcritical pitchfork bifurcations, see Remark \ref{rem-bifucations}.

To compute $\langle g^2, L_1^{-1} g^2 \rangle$, we use the scaling transformation 
$$
L_1^{-1} g^2 = \frac{4}{c} W(y), \quad y = \frac{\sqrt{c}}{2} \xi, 
$$ 
and obtain $W(y)$ from solutions of the linear inhomogeneous equation 
\begin{equation}
\label{eq-w-2}
-W'' + 4 W -  12 \text{sech}^2(y) W = \text{sech}^{2p}(y).
\end{equation}
Two solutions of the homogeneous equations are given by 
$$
W_1 =  \tanh(y) {\rm sech}^2(y), \quad 
W_2=\frac{5}{8}+\frac{1}{4}\text{cosh}^2(y)+\frac{15}{8}{\rm sech}^2(y)(y  {\rm tanh}(y)-1)
$$
which are computed due to the symmetry mode $U_0'$ being in the kernel of $L_1$, see also (\ref{translation}). 
The expression for $W_2$ has been normalized with the Wronskian of $W_1$ and $W_2$ being equal to $1$. 
We obtain by using the variation of parameter formula that 
\begin{equation}
\label{W-expression}
W(y) = W_1(y) \int_{y_0}^{y}W_2g^2dy - W_2(y) \int_{y_1}^{y}W_1g^2dy,
\end{equation}
where $y_0$ and $y_1$ need to be defined from the condition that $W \in H^2_{\rm even}(\mathbb{R})$. 

For the first term in (\ref{W-expression}), since \(W_2 g^2\) is even and $W_1$ is odd, we can set $y_0 = 0$ to ensure that 
$\int_{0}^{y}W_2g^2dy$ is odd and $W_1 \int_{0}^{y}W_2g^2dy$ is even. Moreover, 
for $\vert y\vert \gg 1$, we have $W_1(y) \sim e^{-2\vert y\vert }$, $W_2(y) \sim e^{2\vert y\vert}$, and $g^2(y)\sim e^{-2p\vert y\vert }$ 
so that $W_1(y) \int_0^y W_2g^2 dy \sim e^{-2p\vert y\vert }$ decays to $0$ exponentially. 

For the second term in (\ref{W-expression}), we note that $\int_{-\infty}^{\infty} W_1 g^2 dy = 0$ since $W_1$ is odd and $g^2$ is even. 
Hence, we set $y_1 = -\infty$, which yields the exponential decay of $W_2(y) \int_{-\infty}^y W_1g^2 dy \sim e^{-2p\vert y\vert}$ 
to $0$ for $\vert y\vert \gg 1$. Moreover, we show that $G(y) := W_2(y) \int_{-\infty}^y W_1g^2 dy$ is even since 
\begin{align*} 
G(-y) &= W_2(-y) \int_{-\infty}^{-y} W_1(y') g^2(y') dy' = W_2(y) \int_y^{+\infty} W_1(-y') g^2(-y') d y' \\
&= - W_2(y) \int_y^{+\infty} W_1(y') g^2(y') d y' = W_2(y) \int_{-\infty}^y W_1(y') g^2(y') d y'= G(y).
\end{align*}
Thus, $W$ in (\ref{W-expression}) is even if $y_0 = 0$ and $y_1 = -\infty$. Moreover, $W(y) \to 0$ as $|y| \to \infty$ exponentially 
so that $W \in H^2_{\rm even}(\mathbb{R})$. 

\begin{figure}[htp!]
	\centering
	\includegraphics[width=0.55\textwidth]{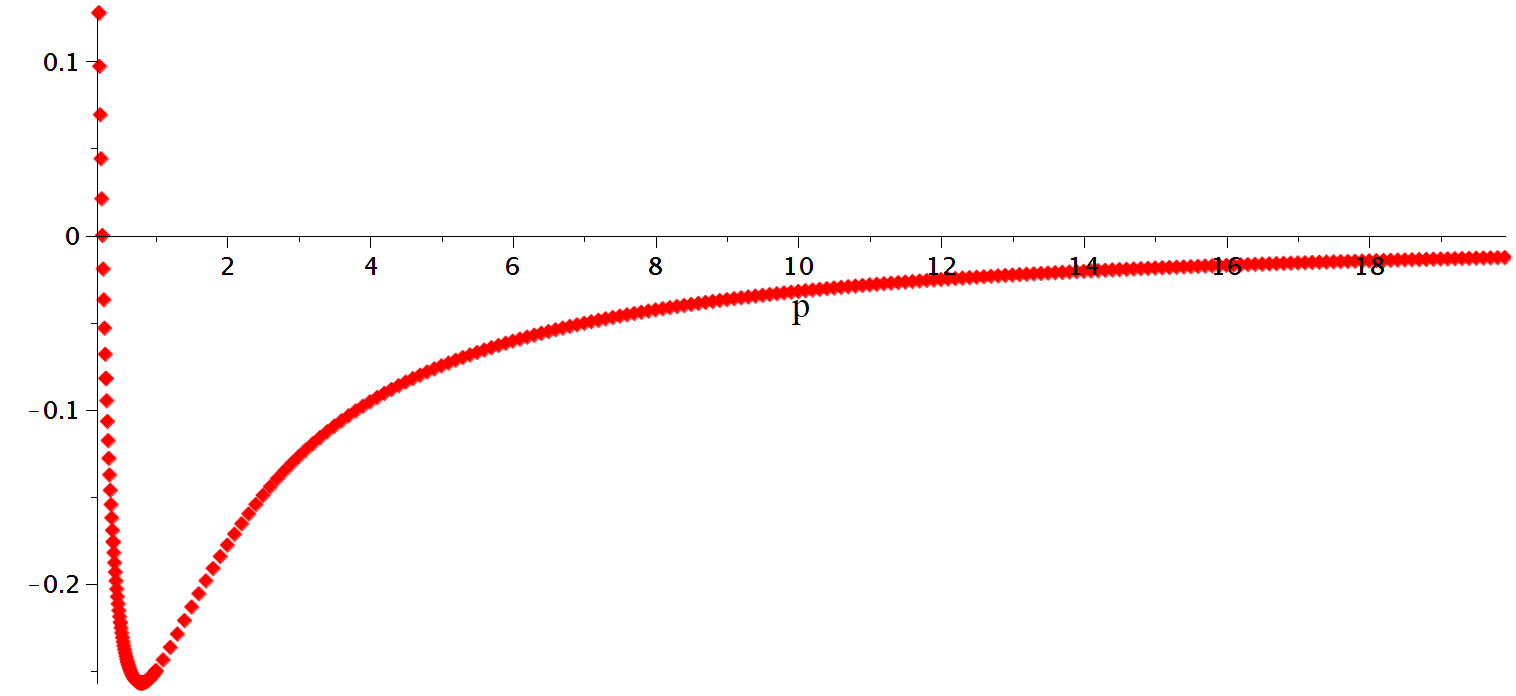}
	\caption{Numerical approximation of $\int_{\mathbb{R}} g^2 W dy$ versus $p$.}
	\label{fig-projection}
\end{figure}

The expression (\ref{W-expression}) was used for the numerical calculation 
of 
$$
\int_{\mathbb{R}} g^2 W dy = \frac{\sqrt{c^3}}{8} \langle g^2, L_1^{-1} g^2 \rangle,
$$
which is plotted in Figure \ref{fig-projection} versus $p$. It follows from Figure \ref{fig-projection} that the value of $\langle g^2, L_1^{-1} g^2 \rangle$ is positive for very small values of 
$p$ and negative for larger values of $p$. Due to (\ref{g-eigenfunction}), the values of $p$ are proportional to the values of $k$, where $s = {\rm sgn}(k) = +1$ implies that $k > 0$. In the special case \(k=\frac{1}{6}\), we have \(p=1\), for which we obtain the exact solution of (\ref{eq-w-2}):
\begin{equation}
\label{expression-W}
W(y)=\frac{1}{4} (y\text{tanh}(y)-1) \text{sech}^2(y).
\end{equation}
Computing the integral yields $\int_{\mathbb{R}} g^2 W dy = -\frac{1}{4}$ for $p = 1$, in agreement with Figure \ref{fig-projection}. 
The value $p = 1$ correspond to the minimum of $\langle g^2, L_1^{-1} g^2 \rangle$ with respect to $p$.

\section{Bifurcation of the coupled solitary wave at $\Omega_c^{(2)}$}
\label{sec-5}

\subsection{The bifurcating branch of coupled solitary waves} 

By Proposition \ref{theorem-uncoupled}, the second bifurcation along the family of the uncoupled KdV solitons occurs at $\Omega = \Omega_c^{(2)}$, see (\ref{Omega-bif-intro}), which we denote as 
\begin{equation}
\label{second-bifurcation} 
\widetilde{\Omega}_c=-\frac{c}{16}(\sqrt{1+48 k}-3)^2, \quad k > \frac{1}{6}.
\end{equation}
The restriction $k > \frac{1}{6}$ is due to the existence of the second isolated eigenvalue of the Schr\"{o}dinger operator $L_2$, see (\ref{eigenvalues-Schr}) and (\ref{block-2}). The second eigenvalue of $L_2$ corresponds to the following eigenfunction:
\begin{equation}
\label{g_tilde-eigenfunction}
    \tilde{g}(\xi) = \text{sech}^{q}\left( \frac{\sqrt{c}}{2} \xi\right)\text{tanh}\left( \frac{\sqrt{c}}{2} \xi\right), \quad q = \frac{\sqrt{1+48 k}-3}{2}.
\end{equation}

\begin{figure}[htb!]
	\centering
	\includegraphics[width=0.55\textwidth]{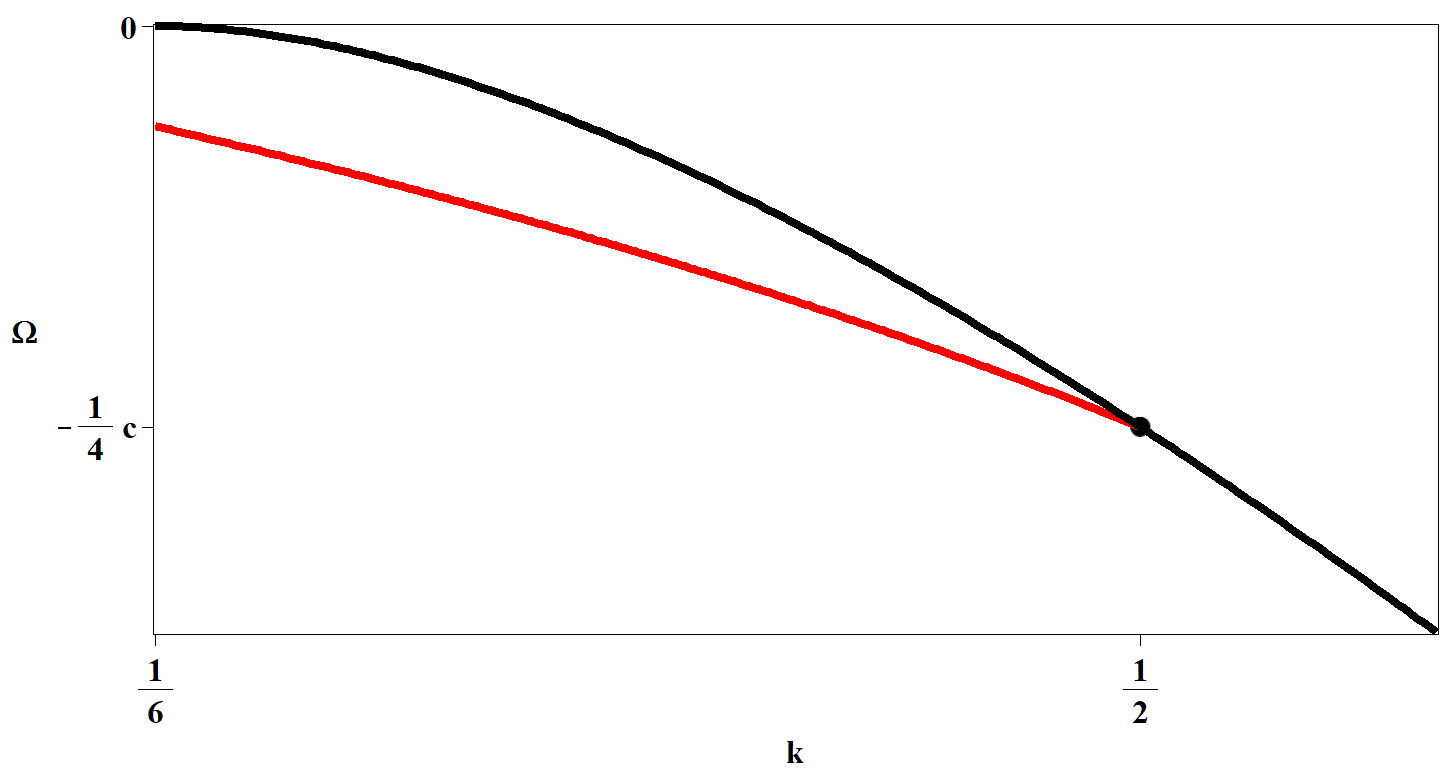}
	\caption{The existence curve \(\Omega_{\rm exact}(k,c)\)  in (\ref{Omega-third})  (red) and the bifircation curve 
		\(\widetilde{\Omega}_c\) (black) versus $k \in \left(\frac{1}{6},\frac{1}{2}\right)$ for $c = 1$.}
	\label{fig-region-third}
\end{figure}

Since we require $s = {\rm sgn}(k) = +1$, the exact solution (\ref{solution-3}) is recovered from this bifurcation if $k = \frac{1}{2}$ for which $q = 1$ and 
$\widetilde{\Omega}_c = -\frac{c}{4}$, so that the profiles $U$ in (\ref{solution-1}) and $\tilde{g}$ in (\ref{g_tilde-eigenfunction}) coincide
with the profile $(U,A)$ of the exact solution (\ref{solution-3}) for $\Omega = -\frac{c}{4}$. The exact solution (\ref{solution-3}) exists 
for $k \in \left(0,\frac{1}{2}\right)$ at the specific value of $\Omega = \Omega_{\rm exact}(k,c)$ given by 
\begin{equation}
    \label{Omega-third}
\Omega_{\rm exact}(k,c) = -\frac{c k}{3 - 2 k}. 
\end{equation}
Because $\Omega$ is required to be in $\left(-\frac{c}{4},0\right)$, the exact solution (\ref{solution-3}) does not exist if $k > \frac{1}{2}$. 
The dependence of $\widetilde{\Omega}_c$ in (\ref{second-bifurcation}) and $\Omega_{\rm exact}(k,c)$ in (\ref{Omega-third}) 
is shown versus $k$ for fixed $c = 1$ in Figure \ref{fig-region-third} by black and red color, respectively. 
The exact solution (\ref{solution-3}) exists for $\Omega_{\rm exact}(k,c) < \widetilde{\Omega}_c$, which suggests the subcritical pitchfork bifurcation  
at $\widetilde{\Omega}_c$ for $k \in \left(\frac{1}{6},\frac{1}{2}\right)$.

The following result describes the bifurcating branch of coupled solitary waves at $\widetilde{\Omega}_c$. 

\begin{proposition}
    \label{theorem-bifurcation-second}    
    Assume $c > 0$, $\Omega < 0$, and $s = {\rm sgn}(k) = 1$.
    Let $U_0$ be given by (\ref{solution-1}) and $\tilde{g}$ be given by (\ref{g_tilde-eigenfunction}). 
    Assume that $\langle \tilde{g}^2, L_1^{-1} \tilde
    {g}^2 \rangle \neq 0$, where $L_1$ is defined in (\ref{block-2}). 
    For every $\Omega$ near $\tilde{\Omega}_c$ such that ${\rm sgn}(\Omega - \widetilde{\Omega}_c) = -{\rm sgn}(\langle \tilde{g}^2, L_1^{-1} \tilde{g}^2 \rangle)$, 
    there exists a unique family of solutions with the profile $(U,A) \in H^2_{\rm even}(\mathbb{R}) \times H^2_{\rm odd}(\mathbb{R})$ satisfying 
   $$
    \| U - U_0 \|_{H^2} \leq C |\Omega - \widetilde{\Omega}_c|, \quad \| A \| \leq C \sqrt{|\Omega - \widetilde{\Omega}_c|},
    $$
    for some $\Omega$-independent constant $C > 0$.
\end{proposition}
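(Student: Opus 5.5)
The plan is to repeat the Lyapunov--Schmidt reduction from the proof of Proposition \ref{theorem-bifurcation-first}, now working in the mixed-parity space $(w,z) \in H^2_{\rm even}(\mathbb{R}) \times H^2_{\rm odd}(\mathbb{R})$. We write ${\bf F}(U_0+w,z,\widetilde{\Omega}_c+\delta\Omega)=0$ in the form (\ref{fixed-point-1}), with $L_2$ evaluated at $\Omega=\widetilde{\Omega}_c$. By the reversibility symmetry of Remark \ref{rem-reversibility}, the map $(U,A)\mapsto(U(-\xi),-A(-\xi))$ preserves solutions. Consequently the right-hand side maps this subspace into itself: $w^2$ and $z^2$ are even, while $wz$ and $\delta\Omega\, z$ are odd. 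On $H^2_{\rm even}$ the operator $L_1$ has trivial kernel, since $\ker L_1={\rm span}(U_0')$ is odd. So $L_1^{-1}$ is bounded from $L^2_{\rm even}$ to $H^2_{\rm even}$, and translation is removed exactly as before.

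Next I identify the kernel of $L_2$ on $H^2_{\rm odd}$. By (\ref{block-2}) and Lemma \ref{lem-Schr}, at $\Omega=\widetilde{\Omega}_c$ the eigenvalue of $L_2$ with $n=1$ vanishes, with eigenfunction $\tilde g$ in (\ref{g_tilde-eigenfunction}). The ground state $g$ with $n=0$ is even, so it does not lie in the odd subspace, even though its eigenvalue is negative there. Eigenvalues of a one-dimensional Schr\"odinger operator are simple, so $\ker L_2|_{H^2_{\rm odd}}={\rm span}(\tilde g)$. For this we need $k>\frac16$, so that $q>0$ and $\tilde g\in H^2$. Moreover $L_2$ is a self-adjoint Fredholm operator on the odd subspace with essential spectrum bounded away from zero. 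Hence it is invertible with bounded inverse on $\{\tilde g\}^\perp\cap L^2_{\rm odd}$, and the fact that it is indefinite there plays no role in the existence argument.

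Now decompose $z=a\tilde g+z_1$ with $\langle\tilde g,z_1\rangle=0$. Projecting the second equation onto $\tilde g$ gives the solvability condition $\delta\Omega\, a\|\tilde g\|^2+k\langle\tilde g,w(a\tilde g+z_1)\rangle=0$, the analogue of (\ref{solv-eq}). Substituting $\delta\Omega$ back yields the fixed-point system of type (\ref{fixed-point-3}), and the implicit function theorem gives a smooth map $a\mapsto(\mathfrak w_1(a),\mathfrak z_1(a))$. These satisfy $\|\mathfrak w_1\|_{H^2}\le Ca^2$ and $\|\mathfrak z_1\|_{H^2}\le C|a|^3$, with $\mathfrak w_1=a^2 s L_1^{-1}\tilde g^2+\mathcal O(a^4)$; note that $\tilde g^2$ is even, so $L_1^{-1}\tilde g^2$ is well defined. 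Substituting into the solvability condition gives
$$\delta\Omega=-k a^2\frac{\langle\tilde g^2,L_1^{-1}\tilde g^2\rangle}{\|\tilde g\|^2_{L^2}}+\mathcal O(a^4).$$
The right-hand side is even in $a$, reflecting the symmetry $A\mapsto-A$. Under the nondegeneracy assumption this relation can be inverted as $a^2=a^2(\delta\Omega)$ for the stated sign of $\Omega-\widetilde\Omega_c$, which gives the bounds $|a|\le C|\Omega-\widetilde\Omega_c|^{1/2}$ and $\|U-U_0\|_{H^2}\le C|\Omega-\widetilde\Omega_c|$. Uniqueness, up to $a\mapsto -a$, follows from the uniqueness in the implicit function theorem.

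The main obstacle is justifying the implicit function step. The term $\langle\tilde g,w_1(a\tilde g+z_1)\rangle/a$ in the reduced equation must be treated as a smooth function of $a$. I would handle this by first solving the $w_1$ equation for $w_1$ as a smooth function of $(a,z_1)$ that is $\mathcal O(a^2+\|z_1\|^2)$. After that, dividing by $a$ is harmless because $w_1(a\tilde g+z_1)$ carries an explicit factor coming from $z=a\tilde g+z_1$. The cleanest route is to rescale $z_1=a\zeta$, which makes all terms analytic in $a$. The rest is parity bookkeeping, which I expect to be routine.
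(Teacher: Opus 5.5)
Your proposal is correct and follows the paper's proof: the same Lyapunov--Schmidt reduction on $H^2_{\rm even}(\mathbb{R})\times H^2_{\rm odd}(\mathbb{R})$, the same solvability condition (\ref{solv-eq_2}), and the same leading-order formula for $\delta\Omega$ via the implicit function theorem. Your rescaling $z_1 = a\zeta$ is the right way to justify the division by $a$ that the paper leaves implicit in the analogue of (\ref{fixed-point-3}); your preceding remark about an explicit factor does not do this, since it only yields a factor of $z$, not of $a$.
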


\begin{proof}
The proof is analogous to the proof of Proposition \ref{theorem-bifurcation-first} but the vector fields \(\mathbf{F}\) in (\ref{vector-field}) 
is now expanded near \(\widetilde{\Omega}_c\). The operator $L_2$ in (\ref{vector-perturbed}) is now 
defined by (\ref{block-2}) with $\Omega = \widetilde{\Omega}_c$ and $\delta \Omega = \Omega - \widetilde{\Omega}_c$. 
The decomposition (\ref{LSdecomposition}) is performed with $g$ replaced by $\tilde{g}$, from which we obtain 
the implicit equation 
\begin{equation}
    \label{fixed-point-4}
\begin{pmatrix} L_1 & 0  \\ 
0 & L_2 
\end{pmatrix} \begin{pmatrix} w_1 \\ z_1 \end{pmatrix} = \begin{pmatrix} \frac{1}{2} w_1^2 + s (a \tilde{g} +z_1)^2 \\  
    2 s w_1 (a \tilde{g} + z_1) + \frac{2 s}{k} \delta \Omega (a \tilde{g} + z_1) \end{pmatrix},
\end{equation}
subject to the solvability condition 
\begin{equation}
    \label{solv-eq_2}
\delta \Omega a \| \tilde{g} \|^2_{L^2} + k  \langle \tilde{g}, w_1 (a \tilde{g} + z_1)\rangle = 0. 
\end{equation}
Since $\ker L_1 = {\rm span}(U_0')$ is odd and $\ker L_2 = {\rm span}(\tilde{g})$ is odd in $\xi$, we consider 
now the implicit equation (\ref{fixed-point-4}) on the subspace of functions $(w_1,z_1) \in H^2_{\rm even}(\mathbb{R}) \times H^2_{\rm odd}(\mathbb{R})$, 
for which $w_1$ is even and $z_1$ is odd in $\xi$. The implicit equation (\ref{fixed-point-4}) is closed in this subspace 
thanks to the reversibility symmetry of the system (\ref{ode-system}), see Remark \ref{rem-reversibility}.
The solvability condition (\ref{solv-eq_2}) is needed because the second equation of the system (\ref{fixed-point-4}) 
contains $L_2$ acting on odd $z_1$ with odd $\ker L_2 = {\rm span}(\tilde{g})$.

By the implicit function theorem for the system (\ref{fixed-point-4}), we obtain the following solution for the bifurcating branch 
\[
U = U_0 + a^2 s L_1^{-1} \tilde{g}^2 + \mathfrak{w}_2(a), \quad 
A = a \tilde{g} + \mathfrak{z}_1(a),
\]
where $(\mathfrak{w}_2(a), \mathfrak{z}_1(a)) \in H^2_{\rm even}(\mathbb{R}) \times H^2_{\rm odd}(\mathbb{R})$ satisfy 
\begin{equation*}
\| \mathfrak{w}_2(a) \|_{H^2} \leq C a^4, \qquad \| \mathfrak{z}_1(a) \|_{H^2} \leq C |a|^3.
\end{equation*}
Expansion of the solvability condition (\ref{solv-eq_2}) then yields
\begin{equation*}
\delta \Omega = - k a^2  \frac{\langle \tilde{g}^2, L_1^{-1} \tilde{g}^2 \rangle}{\| \tilde{g} \|^2_{L^2}} + \mathcal{O}(a^4),
\end{equation*}
which shows that ${\rm sgn}(\delta \Omega) = -{\rm sgn}(\langle \tilde{g}^2, L_1^{-1} \tilde{g}^2 \rangle)$ since $k > 0$. This completes the proof.
\end{proof}

\subsection{Characterization of the bifurcating branch}

We first compute the Morse index and the nullity index of the bifurcating branch in Proposition \ref{theorem-bifurcation-second}. 

\begin{proposition}
    \label{theorem-Morse-second}
    Let $(U,A)$ be the profile of the bifurcating branch for $\Omega$ near $\widetilde{\Omega}_c$ given by Proposition \ref{theorem-bifurcation-second}. 
   Its Morse index is equal to $3$ if $\langle \tilde{g}^2, L_1^{-1} \tilde{g}^2 \rangle < 0$ and to $4$ if $\langle \tilde{g}^2, L_1^{-1} \tilde{g}^2 \rangle > 0$, whereas the nullity index is equal to $2$ in both cases. 
\end{proposition}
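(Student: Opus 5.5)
We follow the proof of Proposition \ref{theorem-Morse-first} line by line, with $g$ replaced by $\tilde{g}$ and $\Omega_c$ by $\widetilde{\Omega}_c$. The one structural change is the eigenvalue count at the bifurcation point $(U_0,0,\widetilde{\Omega}_c)$.

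\textbf{Unperturbed count.} The diagonal Hessian has the blocks $(L_1,L_2,L_2)$.
\begin{itemize}
\item $L_1$ has one negative eigenvalue, with even eigenfunction, and $\ker L_1={\rm span}(U_0')$.
\item By Lemma \ref{lem-Schr}, $L_2$ at $\Omega=\widetilde{\Omega}_c$ has one negative eigenvalue, with the even eigenfunction $g$ of (\ref{g-eigenfunction}), and $\ker L_2={\rm span}(\tilde{g})$. The rest of its spectrum is strictly positive.
\end{itemize}
Hence $\mathcal{L}_J^{(0)}$ has Morse index $2$ and a double kernel spanned by $(U_0',0)$ and $(0,\tilde{g})$. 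Both kernel vectors are odd, which is what makes the perturbation theory decouple exactly as before.

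\textbf{Splitting of the double kernel.} Along the branch, $U=U_0+a^2 sL_1^{-1}\tilde{g}^2+\mathcal{O}(a^4)$, $A=a\tilde{g}+\mathcal{O}(a^3)$ and $\delta\Omega=\mathcal{O}(a^2)$. Expand $\mathcal{L}_J=L_J^{(0)}+aL_J^{(1)}+a^2L_J^{(2)}+\mathcal{O}(a^3)$ with $g\to\tilde{g}$.

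At order $a$, parity forces $\lambda^{(1)}=0$: $\tilde{g}^2$ is even and orthogonal to $U_0'$, and $\tilde{g}U_0'$ is even and orthogonal to $\tilde{g}$. The first-order corrections are then:
\begin{itemize}
\item $f_1=2sc_2L_1^{-1}\tilde{g}^2$, solved in the even subspace, where $L_1$ is invertible.
\item $g_1=c_1\tilde{g}'$, since differentiating $L_2\tilde{g}=0$ gives $L_2\tilde{g}'=2sU_0'\tilde{g}$.
\end{itemize}
Note that $g_1$ is now even. It is still uniquely defined: $\tilde{g}U_0'$ is even, so the relevant solvability is against the even function $g$ in $\ker(L_2+\text{shift})$. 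However, $L_2$ is invertible on even functions, because its kernel $\tilde{g}$ is odd; the negative eigenvalue does not obstruct invertibility.

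At order $a^2$, the projections give a diagonal $2\times2$ system, as in Proposition \ref{theorem-Morse-first}.
\begin{itemize}
\item The translational entry vanishes by the identical computation (\ref{constraint-transl}). That computation used only $L_1U_0=-\frac12U_0^2$, the formula for $L_1U_0^2$, and $L_2^{-1}\tilde{g}U_0'=\frac s2\tilde{g}'$, none of which depend on the choice of eigenfunction.
\item The other entry is $\lambda^{(2)}=-4\langle\tilde{g}^2,L_1^{-1}\tilde{g}^2\rangle/\|\tilde{g}\|^2_{L^2}$.
\end{itemize}
So the non-symmetry zero eigenvalue moves to the positive side if $\langle\tilde{g}^2,L_1^{-1}\tilde{g}^2\rangle<0$ and to the negative side if it is $>0$. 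By continuity of the isolated eigenvalues (negative ones near $-5c/4$ and near the negative eigenvalue of $L_2$; the rest positive and bounded away from zero), $n(\mathcal{L}_J)=2$ or $3$ respectively, and the kernel of $\mathcal{L}_J$ is one-dimensional.

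\textbf{Third block.} The third diagonal block of $\mathcal{L}$ is the scalar $L_2$ at the perturbed $(U,\Omega)$. It has the kernel vector $A$ from the rotational symmetry, since $L_2A=0$ exactly. By continuity it keeps exactly one negative eigenvalue, near that of $L_2$ at $U_0$ with eigenfunction approximately $g$, and no other small eigenvalue: $A\approx a\tilde{g}$ is the continuation of $\tilde{g}$, and the unperturbed $L_2$ has only one zero eigenvalue. So this block contributes Morse index $1$ and nullity $1$.

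\textbf{Conclusion and main obstacle.} Adding the blocks gives Morse index $3$ or $4$ and nullity $2$, as claimed. The main point to check carefully is that the negative eigenvalue of $L_2$ at $\widetilde{\Omega}_c$ stays negative and separated from zero, so that it does not interact with the splitting. This holds because its distance to $0$ is $\mathcal{O}(1)$ and the perturbations are $\mathcal{O}(a)$. The remaining work, the vanishing of the translational $\lambda^{(2)}$, is a verbatim repeat of the earlier computation.
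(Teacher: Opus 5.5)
Your proposal is correct and follows the paper's proof. The paper repeats the perturbation argument of Proposition \ref{theorem-Morse-first} with $g$ replaced by $\tilde{g}$, obtaining $\lambda^{(1)}=0$ and $\lambda^{(2)}=-4\langle \tilde{g}^2,L_1^{-1}\tilde{g}^2\rangle/\|\tilde{g}\|_{L^2}^2$, and then adds the simple negative eigenvalue and the rotational zero eigenvalue of the scalar block $L_2$; you additionally spell out details the paper leaves implicit, namely that $L_2$ at $\widetilde{\Omega}_c$ contributes one negative eigenvalue (eigenfunction $g$) both inside $\mathcal{L}_J$ and in the third block, and that $g_1=c_1\tilde{g}'$ now comes from inverting $L_2$ on even rather than odd functions.
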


\begin{proof}
Following the same steps as in Proposition \ref{theorem-Morse-first}, we find that the double zero eigenvalue of the Hessian block $L_J$ split 
into a simple zero eigenvalue due to translational symmetry and a nonzero eigenvalue $\lambda = a \lambda^{(1)} + a^2 \lambda^{(2)} + \mathcal{O}(a^3)$ with 
\[\lambda^{(1)}=0, \ \ 
\lambda^{(2)}=-4\frac{\langle \tilde{g}^2,L_{1}^{-1}\tilde{g}^2\rangle}{\Vert \tilde{g}\Vert^2_{L^2}}.
\]
Due to the splitting of the zero eigenvalue of $L_J$ for small $a$, the Morse index of \(L_J\) is $2$ if \(\langle \tilde{g}^2,L_{1}^{-1}\tilde{g}^2\rangle<0\) and $3$ if \(\langle \tilde{g}^2,L_{1}^{-1}\tilde{g}^2\rangle>0\). In addition, the Schr\"{o}dinger 
operator \(L_2\) has a simple negative eigenvalue and a zero eigenvalue for 
small $a$. Referring back to the Hessian operator \(\mathcal{L}\) which is block-diagonalized into $L_J$ and $L_2$, we obtain the Morse index of \(\mathcal{L}\) being equal to $3$ if \(\langle \tilde{g}^2,L_{1}^{-1}\tilde{g}^2\rangle<0\) and to $4$ if \(\langle \tilde{g}^2,L_{1}^{-1}\tilde{g}^2\rangle>0\) for $\Omega \neq \widetilde{\Omega}_c$, whereas the nullity index of \(\mathcal{L}\) is $2$ for $\Omega \neq \widetilde{\Omega}_c$ due to the simple zero eigenvalue of $L_J$ and 
the simple zero eigenvalue of $L_2$.
\end{proof}

We next clarify the constrained minimization properties of the bifurcating branch by using information about the Morse and nullity indices from Proposition \ref{theorem-Morse-second}.

\begin{proposition}
    \label{theorem-stability-second}
    Let $(U,A)$ be the profile of the bifurcating branch for $\Omega$ near $\widetilde{\Omega}_c$ given by Proposition \ref{theorem-bifurcation-second}. 
If either $\langle \tilde{g}^2, L_1^{-1} \tilde{g}^2 \rangle < 0$ or $\langle \tilde{g}^2, L_1^{-1} \tilde{g}^2 \rangle > 0$, the bifurcating branch 
is a saddle point of the constrained energy $H$ for fixed momentum $P$ and mass $Q$ with exactly two negative eigendirections and degenerate only by the translational and rotational symmetries. 
\end{proposition}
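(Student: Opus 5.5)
I will mirror the proof of Proposition \ref{theorem-stability-first}, replacing $g$ by $\tilde{g}$ and $\Omega_c$ by $\widetilde{\Omega}_c$. The plan is to apply the index count (\ref{index-count}) from Theorem 3.2 in \cite{Dmitry} to the Hessian $\mathcal{L}$ constrained by $\langle U,w\rangle = 0$ and $\langle A,z_1\rangle=0$. The inputs are $n(\mathcal{L})$ and $z(\mathcal{L})=2$ from Proposition \ref{theorem-Morse-second}, together with the numbers $p_0$, $z_0$ of positive and zero eigenvalues of the $2\times 2$ matrix ${\bf D}$ of derivatives of $(P(U,0),Q(A))$ with respect to $(c,|\Omega|)$. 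The target is $n(\hat{\mathcal{L}})=2$ and $z(\hat{\mathcal{L}})=2$.

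First, I parametrize the branch of Proposition \ref{theorem-bifurcation-second} by $(c,a^2)$, with
\[
U=U_0+a^2 \tilde w_2+\mathcal{O}(a^4), \qquad \tilde w_2 = sL_1^{-1}\tilde g^2,
\]
and $\Omega=\widetilde\Omega_c+\delta\Omega(a)$. Expanding $P$ and $Q$ as in (\ref{P-Q-expansion}) gives
\[
\partial_c P = 18\sqrt c+\mathcal{O}(a^2),\quad \partial_{a^2}P=\langle U_0,\tilde w_2\rangle+\mathcal{O}(a^2),\quad \partial_c Q=\mathcal{O}(a^2),\quad \partial_{a^2}Q=\tfrac{s}{k}\|\tilde g\|^2_{L^2}+\mathcal{O}(a^2).
\]
Inverting the expansion of $\delta\Omega$ yields
\[
\frac{\partial a^2}{\partial\Omega}=-\frac{\|\tilde g\|^2_{L^2}}{sk\langle \tilde g^2,L_1^{-1}\tilde g^2\rangle}+\mathcal{O}(a^2).
\]
Applying the chain rule exactly as before, the same algebra gives
\[
\det{\bf D}=\frac{18\sqrt c\,\|\tilde g\|^4_{L^2}}{k^2\langle \tilde g^2,L_1^{-1}\tilde g^2\rangle}+\mathcal{O}(a^2),
\]
and the $(2,2)$ entry of ${\bf D}$ equals $\|\tilde g\|^4_{L^2}/(k^2\langle\tilde g^2,L_1^{-1}\tilde g^2\rangle)+\mathcal{O}(a^2)$.

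The counting then splits into two cases. If $\langle\tilde g^2,L_1^{-1}\tilde g^2\rangle<0$, then $\det{\bf D}<0$, so $p_0=1$ and $z_0=0$. With $n(\mathcal{L})=3$ this gives $n(\hat{\mathcal{L}})=2$. If $\langle\tilde g^2,L_1^{-1}\tilde g^2\rangle>0$, then $\det{\bf D}>0$ and the $(2,2)$ entry is positive, so $p_0=2$ and $z_0=0$. With $n(\mathcal{L})=4$ this again gives $n(\hat{\mathcal{L}})=2$. In both cases $z(\hat{\mathcal{L}})=z(\mathcal{L})=2$, so the only degeneracy comes from the translational and rotational symmetries. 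Two negative directions survive the constraints, hence a saddle point.

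The main obstacle is verifying that the leading-order entries of ${\bf D}$ are nondegenerate uniformly for small $a$, so that the $\mathcal{O}(a^2)$ remainders cannot change the signs. This needs $\langle\tilde g^2,L_1^{-1}\tilde g^2\rangle\neq 0$, which is assumed, and also a bound on $\partial_c$ of the remainder terms, i.e. smooth dependence of $\mathfrak w_2,\mathfrak z_1$ on $c$. I would obtain the latter from the implicit function theorem with $c$ as an additional parameter. I would also check that the rotational kernel vector $(0,0,A)$ of the $L_2$ block, and the negative eigenvalue of that block (which comes from $\tilde g$ being the second eigenfunction), are handled correctly by the count. That negative eigenvalue lies in the $z_2$-component, which the constraints do not touch, so it persists in $\hat{\mathcal{L}}$. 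This is consistent with the final count of two negative directions.
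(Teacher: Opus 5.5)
Your proposal is correct and follows the paper's proof essentially verbatim. It uses the same leading-order computation of $\det{\bf D}$ and of the $(2,2)$ entry of ${\bf D}$ with $g$ replaced by $\tilde g$, the same case split giving $z_0=0$ and $p_0=1$ or $p_0=2$, and the count (\ref{index-count}) with $n(\mathcal{L})=3$ or $4$ from Proposition \ref{theorem-Morse-second}. Your arithmetic $4-2=2$ in the second case is the right one (the paper's displayed ``$4-3$'' is a typo), and your point that smooth dependence of the branch on $c$ should come from the implicit function theorem with $c$ as a parameter makes explicit what the paper uses tacitly.
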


\begin{proof}
We follow the same steps as in the proof of Proposition \ref{theorem-stability-first} and obtain the same determinant formula with \(g\) replaced with \(\tilde{g}\):
\[ 
\det {\bf D} 
    = \frac{18 \sqrt{c} \| \tilde{g} \|^4_{L^2}}{k^2 \langle \tilde{g}^2, L_1^{-1} \tilde{g}^2 \rangle} + \mathcal{O}(a^2).
\]

\underline{If $\langle \tilde{g}^2, L_1^{-1} \tilde{g}^2 \rangle < 0$,} then 
$\det {\bf D}  < 0$ and \(\mathbf{D}\) has one positive and one negative eigenvalue. Hence \(p_0=1\) and \(z_0=0\). 
Since $n(\mathcal{L}) = 3$ and $z(\mathcal{L}) = 2$ by Proposition  \ref{theorem-Morse-second}, the count (\ref{index-count}) implies 
\begin{align*}
\left\{ \begin{array}{l} 
n(\hat{\mathcal{L}}) = n(\mathcal{L})-p_0-z_0=3-1=2, \\
z(\hat{\mathcal{L}}) = z(\mathcal{L}) + z_0 = 2.
\end{array} \right. 
\end{align*}

\underline{If $\langle \tilde{g}^2, L_1^{-1} \tilde{g}^2 \rangle > 0$,} then $\det {\bf D} > 0$ and since 
the second diagonal entry of ${\bf D}$ is again positive for small \(a\), \(\mathbf{D}\) has two positive eigenvalues 
so that $p_0 = 2$ and $z_0 = 0$. Since $n(\mathcal{L}) = 4$ and $z(\mathcal{L}) = 2$ by Proposition \ref{theorem-Morse-second}, 
the count (\ref{index-count}) implies 
\begin{align*}
\left\{ \begin{array}{l} 
n(\hat{\mathcal{L}}) = n(\mathcal{L})-p_0-z_0=4-3=2, \\
z(\hat{\mathcal{L}}) = z(\mathcal{L}) + z_0 = 2.
\end{array} \right. 
\end{align*}
In either case, \( \hat{\mathcal{L}} \) has exactly two negative eigenvalues so that the bifurcating branch is a saddle point of the constrained energy $H$ for fixed momentum $P$ and mass $Q$ with exactly two negative eigendirections. 
In addition, \( \hat{\mathcal{L}} \) has a double zero eigenvalue due to the translational and rotational symmetries of the coupled KdV--LS system (\ref{KdV-NLS}). 
\end{proof}

\begin{remark}
Since the primary branch of the uncoupled KdV solitons is spectrally stable at $\Omega = \widetilde{\Omega}_c$ (and for every $\Omega$), 
we do not know if the bifurcating branch of the coupled solitary waves is spectrally unstable for $\Omega \neq \widetilde{\Omega}_c$. 
Therefore, we cannot use  Theorem 2.4 in \cite{Dmitry} to conclude on the orbital instability of the saddle point of the constrained energy given in Proposition \ref{theorem-stability-second}.
\label{rem-instability}
\end{remark}

\subsection{Spectral instability of the bifurcating branch}

Further to Remark \ref{rem-instability}, we show here that the spectral 
stability problem associated with the bifurcating branch of Proposition \ref{theorem-bifurcation-second} admits a pair of neutrally stable eigenvalues of 
the negative Krein signature embedded into the continuous spectrum 
of the positive Krein signature, see Lemma \ref{lem-instability}. 
The result holds in a range of the values of $k$ which include the interval $\left[ \frac{1}{6},\frac{1}{2}\right]$ in Figure \ref{fig-region-third}, and hence 
the exact solution (\ref{solution-3}) at the bifurcation point. 

\begin{remark}
	We say that the pair of eigenvalues $\lambda = \pm i \omega$ with $\omega \in \mathbb{R}$ of the spectral stability problem with the eigenvectors 
	$\vec{w}_{\pm} = (w,z_1,z_2) \in (H^2(\mathbb{R}))^3$, see (\ref{KdV-NLS-spectral}) below, has negative Krein signature if 
	$\langle \mathcal{L} \vec{w}_{\pm}, \vec{w}_{\pm} \rangle_{L^2} < 0$. It follows from the general theory (see, e.g., \cite{G90} and \cite{CPV05}) that the embedded eigenvalues of the negative Krein signature split into unstable eigenvalues in the continuation of the bifurcating branch past the bifurcation point. However, to be able to claim this splitting, we need to compute a scalar quantity for Fermi's Golden Rule, which is computationally complicated. We postpone further proof of the spectral instability of the bifurcating  branch near $\Omega = \widetilde{\Omega}_c^{(2)}$, and near $\Omega = \Omega_c^{(j)}$ for any $j \geq 2$, for a future work.
\end{remark}

To formulate and prove the claim, we first derive the spectral stability 
problem for the coupled solitary waves with the profile $(U,\Psi)$. 
We use the decomposition 
$$
u(x,t) = U(\xi) + w(\xi,t), \qquad 
\psi(x,t) = e^{-i \omega t} [ \Psi(\xi) + z(\xi,t) ]
$$ 
where $\xi = x - ct$, and obtain the linearization of 
the KdV--LS system (\ref{KdV-NLS}):
\begin{equation}
    \label{KdV-NLS-lin}
\begin{cases}
    w_{t} +  (U w)_{\xi} + w_{\xi \xi \xi} - c w_{\xi} +s (\Psi \bar{z} + \bar{\Psi} z)_{\xi}=0, \\
 i z_{t} + z_{\xi \xi} - i c z_{\xi} + \omega z + k (U z + \Psi w) =0.
\end{cases}
\end{equation}
By using now the variables (\ref{variables-Psi-z}), we get 
\begin{equation}
    \label{KdV-NLS-spectral}
\begin{cases}
    w_{t} = \partial_{\xi} \left[ L_1 w - 2 s A z_1 \right], \\
    z_{1t} = \frac{k}{2 s } L_2 z_2, \\
    z_{2t} = -\frac{k}{2 s } \left[ -2 s A w + L_2 z_1 \right],
\end{cases}
\end{equation}
where $L_1$ and $L_2$ are given in (\ref{block-1}) with $\Omega = \omega + \frac{c^2}{4}$. Separating the variables by using the exponential functions 
$e^{\lambda t}$ in the linearized equations (\ref{KdV-NLS-spectral}), we obtain the spectral stability problem:
\begin{equation}
    \label{KdV-NLS-stability}
\lambda \begin{pmatrix} w \\ z_1 \\ z_2 \end{pmatrix} = \begin{pmatrix} \partial_{\xi} & 0 & 0 \\ 0 & 0 & \frac{k}{2s} \\ 0 & -\frac{k}{2s} & 0 \end{pmatrix} 
\begin{pmatrix} L_1 & -2 s A & 0 \\ -2 s A & L_2 & 0 \\ 0 & 0 & L_2 \end{pmatrix} \begin{pmatrix} w \\ z_1 \\ z_2 \end{pmatrix} \equiv \mathcal{J} \mathcal{L} \lambda \begin{pmatrix} w \\ z_1 \\ z_2 \end{pmatrix},
\end{equation}
where $\mathcal{L}$ is the Hessian operator in Lemma \ref{lem-Hessian} and $\mathcal{J}$ is the skew-adjoint operator satisfying $\mathcal{J}^* = -\mathcal{J}$ in $(L^2(\mathbb{R}))^3$. The continuous spectrum of (\ref{KdV-NLS-stability}) is found from the limit $U(\xi), A(\xi) \to 0$ as $|\xi| \to \infty$ exponentially fast. The continuous spectrum consists of three spectral bands: $i \mathbb{R}$, $i \left[|\Omega|,\infty \right)$ and $i \left( -\infty, -|\Omega| \right]$. There exists no spectral gap near $\lambda = 0$ due to the KdV part of the system (\ref{KdV-NLS-spectral}) and there exists a spectral gap $i \left( -|\Omega|,|\Omega| \right)$ due to the LS part of the system (\ref{KdV-NLS-spectral}). 

\begin{lemma}
	\label{lem-instability}
	Let $(U,A)$ be the profile of the bifurcating branch for $\Omega$ near $\tilde{\Omega}_c$ given by Proposition \ref{theorem-bifurcation-second}. 
	If $k \in (k_-,k_+)$, where 
\begin{equation}
\label{k-minus-plus}
	k_- = \frac{8-5\sqrt{2}}{12} \approx 0.077, \qquad 
	k_+ = \frac{8+5\sqrt{2}}{12} \approx 1.256,
\end{equation}
	then the spectral stability problem (\ref{KdV-NLS-stability}) 
	admits a pair of eigenvalues $\lambda = \pm i \omega$ 
	such that $\omega \in \left( -\infty, -|\Omega| \right] \cup \left[|\Omega|,\infty \right)$ 
	with the corresponding eigenvectors $\vec{w}_{\pm} = (w,z_1,z_2) \in (H^2(\mathbb{R}))^3$ satisfying 
	$\langle \mathcal{L} \vec{w}_{\pm}, \vec{w}_{\pm} \rangle_{L^2} < 0$.
\end{lemma}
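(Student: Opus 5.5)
The plan is to compute the spectrum of (\ref{KdV-NLS-stability}) at the bifurcation point $a=0$, where the problem decouples. There one can exhibit an explicit pair of purely imaginary eigenvalues coming from the \emph{first} (negative) eigenvalue of $L_2$. The stated range $(k_-,k_+)$ is exactly the condition that these eigenvalues lie in the continuous spectrum $i[|\Omega|,\infty)\cup i(-\infty,-|\Omega|]$ of the LS part.

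\textbf{Decoupling at $a=0$.} At $(U,A,\Omega)=(U_0,0,\widetilde{\Omega}_c)$ the $w$-equation in (\ref{KdV-NLS-spectral}) separates from the $(z_1,z_2)$-equations, and with $s=1$ the latter read
\[
z_{1t}=\tfrac{k}{2}L_2 z_2,\qquad z_{2t}=-\tfrac{k}{2}L_2 z_1 .
\]
Set $\mathcal{M}:=\tfrac{k}{2}L_2=-\partial_\xi^2-\Omega-kU_0$. If $\mathcal{M}g=\nu g$, then $(w,z_1,z_2)=(0,g,\pm i g)$ is an eigenvector with $\lambda=\pm i\nu$. By (\ref{block-2}) and Lemma \ref{lem-Schr}, the operator $-\partial_\xi^2-kU_0$ has eigenvalues $-\frac{c}{16}(\sqrt{1+48k}-2j+1)^2$. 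Write $\sigma:=\sqrt{1+48k}$, so that $\Omega=\widetilde{\Omega}_c=-\frac{c}{16}(\sigma-3)^2$. The ground state $g$ from (\ref{g-eigenfunction}) then has
\[
\nu_0=\tfrac{c}{16}\left[(\sigma-3)^2-(\sigma-1)^2\right]=-\tfrac{c}{4}(\sigma-2)<0 .
\]
This is negative because $\sigma>3$ for $k>\frac16$.

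\textbf{Krein signature and embedding.} For $\vec w_\pm=(0,g,\pm i g)$ the form $\langle\mathcal{L}\vec w_\pm,\vec w_\pm\rangle$ reduces to $2\langle L_2 g,g\rangle=\frac{4}{k}\nu_0\|g\|^2<0$, computed with the real pair $(z_1,z_2)=(g,0),(0,g)$. So the signature is negative. The eigenvalues $\lambda=\pm i|\nu_0|$ lie in the LS bands iff $|\nu_0|\ge|\widetilde{\Omega}_c|$, that is,
\[
4(\sigma-2)\ge(\sigma-3)^2 \iff \sigma^2-10\sigma+17\le 0 \iff \sigma\in[5-2\sqrt2,\,5+2\sqrt2].
\]
Translating back through $k=(\sigma^2-1)/48$ and $(5\mp2\sqrt2)^2=33\mp20\sqrt2$ gives exactly $k\in[k_-,k_+]$ with $k_\mp=(8\mp5\sqrt2)/12$. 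For $k\in(k_-,k_+)$ the inequality is strict, so the eigenvalue lies strictly inside the band. Since $k_-<\frac16$, the existence requirement $k>\frac16$ of the second bifurcation does not shrink the interval below $\frac16$, and it contains $[\frac16,\frac12]$.

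\textbf{Passing to the branch (main obstacle).} The remaining step is to transfer this to the bifurcating branch for small $a\neq0$, where $A=a\tilde g+\mathcal{O}(a^3)$ couples $w$ to $z_1$ and $\Omega=\widetilde{\Omega}_c+\mathcal{O}(a^2)$. Strict inequality gives an $\mathcal{O}(1)$ margin inside the band, and the Krein quantity $\langle\mathcal{L}\vec w,\vec w\rangle$ is strictly negative, so it is robust under $\mathcal{O}(a)$ perturbation of $\mathcal{L}$ and of the eigenvector. I would first set up $\vec w=\vec w_\pm^{(0)}+a\vec w^{(1)}+\dots$ and solve the $\mathcal{O}(a)$ correction. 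The coupling term $-2A z_1$ feeds into $\partial_\xi[L_1 w]$, and $L_1$ has no spectral gap near $i\mathbb{R}$, so invertibility there needs care.

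Embedded eigenvalues generally do not persist exactly; by the remark preceding the lemma they are expected to split off the axis via Fermi's Golden Rule. So the claim should be established at the level of the leading-order expansion, i.e. the eigenvalue pair persists to the order computed. The main difficulty is justifying this step rigorously. I would state the result as holding at the bifurcation point and to leading order in $a$ along the branch, with the strict negativity of the Krein signature and the strict embedding inherited by continuity.
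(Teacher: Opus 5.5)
Your argument is the paper's proof. At $(U_0,0,\widetilde{\Omega}_c)$ the problem decouples, the ground state $g$ of $L_2$ gives $\lambda=\pm i|\nu_0|$ with negative Krein quantity $\frac{4}{k}\nu_0\|g\|^2$, and your embedding condition $4(\sigma-2)>(\sigma-3)^2$ is exactly the paper's $|\Omega_c|>2|\widetilde{\Omega}_c|$, i.e.\ $\sigma^2-10\sigma+17<0$. The paper's proof likewise stops at the bifurcation point and never carries out the continuation to $a\neq 0$, so your caveat is warranted; given the Fermi Golden Rule remark, the ``inherited by continuity'' step should not be read as a proof for $a\neq 0$.
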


\begin{proof}
For the uncoupled KdV solitons (\ref{solution-1}), we have $A = 0$ and the spectral stability problem (\ref{KdV-NLS-stability}) is uncoupled. 
The first component satisfies 
\begin{equation}
    \label{spec-KdV}
\lambda w = \partial_{\xi} L_1 w,
\end{equation}
which is the stability problem for the KdV solitons. Since KdV solitons are spectrally, orbitally, and asymptotically stable \cite{PW94}, 
then $\lambda \in i \mathbb{R}$ for the spectral problem (\ref{spec-KdV}).
The second and third components are diagonalized by the transformation $z_{\pm} = z_1 \pm i z_2$, which satisfy the uncoupled 
spectral problems 
\begin{equation}
    \label{spec-NLS}
\lambda z_{\pm} = \mp \frac{i k}{2 s} L_2 z_{\pm}.
\end{equation}
For any isolated eigenvalue of $L_2$ in the set $\{ \lambda_n \}_{n=1}^J$,
where $J \in \mathbb{N}$ satisfies $k > \frac{J(J-1)}{12}$, 
there exists a pair of purely imaginary eigenvalues $\lambda = \pm i \omega_n$ 
of the spectral problem (\ref{spec-NLS}), 
where $\omega_n = \frac{k}{2} \lambda_n$. The corresponding eigenvector $\vec{w} = (w,z_1,z_2) \in (H^2(\mathbb{R}))^3$ satisfies $\langle \mathcal{L} \vec{w}, \vec{w} \rangle_{L^2} < 0$ if $\lambda_n < 0$ and $\langle \mathcal{L} \vec{w}, \vec{w} \rangle_{L^2} > 0$ if $\lambda_n > 0$.

For the bifurcating branch at $\Omega = \widetilde{\Omega}_c$, 
we have $\lambda_1 < 0$, $\lambda_2 = 0$, and the remaining eigenvalues (if $N \geq 3$) are strictly positive. The eigenvalues $\lambda = \pm i \omega_1$ 
with $\omega_1 = \frac{k}{2} \lambda_1$ belong to the continuous spectrum $i ( -\infty, -|\Omega|] \cup [|\Omega|,\infty )$ if and only if $\lambda_1 < \widetilde{\Omega}_c$. This leads to the constraint $|\Omega_c| > 2 |\widetilde{\Omega}_c|$, rewritten explicitly as 
$$
(\sqrt{1+48k}-1)^2 > 2 (\sqrt{1+48k}-3)^2.
$$
Expanding yields 
$$
1 + 48 k - 10 \sqrt{1 + 48k} + 17 < 0, 
$$
which is satisfied for 
$$
5 - 2\sqrt{2} < \sqrt{1 + 48 k} < 5 + 2\sqrt{2}. 
$$
Further expansion yields $k \in (k_-,k_+)$ with $k_-$, $k_+$ given by  (\ref{k-minus-plus}). 
\end{proof}

\subsection{Numerical approximations of $\langle \tilde{g}^2, L_1^{-1} \tilde{g}^2 \rangle$}

For the numerical approximations of $\langle \tilde{g}^2, L_1^{-1} \tilde{g}^2 \rangle$, we use the scaling transformation
\[
L_{1}^{-1} \tilde{g}^2 =\frac{4}{c}\tilde{W}(y), \quad y = \frac{\sqrt{c}}{2} \xi,
\]
and obtain \(\tilde{W}\) from solutions of the linear inhomogeneous equation 
\begin{equation}
\label{eq-w-2-second}
-\tilde{W}'' + 4 \tilde{W} - 12 {\rm sech}^2(y) \tilde{W} = {\rm sech}^{2q}(y) \tanh^2(y).
\end{equation}
For \(q=1\), the solution of (\ref{eq-w-2-second}) can be computed explicitly as
\[
\tilde{W}(y)=\frac{1}{12}\text{sech}^2(y)(3y\text{tanh}(y)-1),
\]
which yields
\[
\int_\mathbb{R} \tilde{g}^2\tilde{W}dy=\frac{1}{60}. 
\]
For other values of $q$, we obtain $\tilde{W}$ from (\ref{eq-w-2-second}) numerically by using the same representation (\ref{W-expression}) with $g$ replaced by $\tilde{g}$. The numerical approximation of the integral
$$
\int_{\mathbb{R}} \tilde{g}^2 \tilde{W} dy = \frac{\sqrt{c^3}}{8} \langle \tilde{g}^2, L_1^{-1} \tilde{g}^2 \rangle
$$
is plotted in Figure \ref{fig-projection_2} versus $q$. It follows from Figure \ref{fig-projection_2} that the value of $\langle \tilde{g}^2, L_1^{-1} \tilde{g}^2 \rangle$ is positive for \(q<2\) 
and negative for $q>2$. This also agrees with the exact value $\int_\mathbb{R} \tilde{g}^2\tilde{W}dy=\frac{1}{60}$ for $q = 1$ 
and with the type of the subcritical pitchfork bifurcation ($\Omega < \widetilde{\Omega}_c$) suggested by the exact solution (\ref{solution-3}) 
for $k \in \left(\frac{1}{6},\frac{1}{2}\right)$, see Figure \ref{fig-region-third}, where $k \in \left(\frac{1}{6},\frac{1}{2}\right)$ corresponds to $q \in (0,1)$. 
\begin{figure}[htp!]
	\centering
	\includegraphics[width=0.55\textwidth]{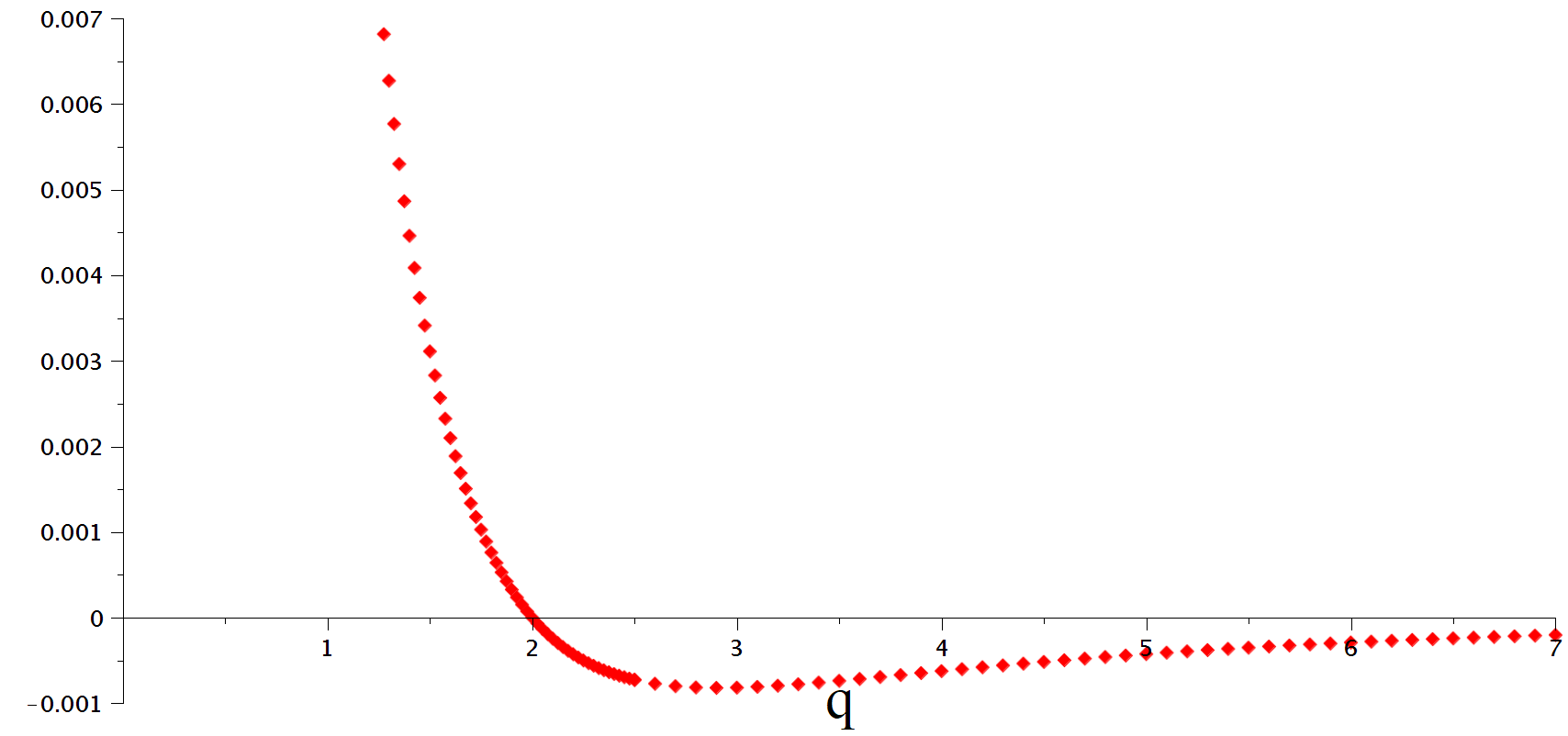}
	\caption{Numerical approximation of $\int_{\mathbb{R}} \tilde{g}^2 \tilde{W} dy$ versus $q$.}
	\label{fig-projection_2}
\end{figure}

{\bf Funding declaration:} The authors declare no funding.

\bibliographystyle{abbrv}
\bibliography{CoupledKdVNLS}

\end{document}